\numberwithin{equation}{section}
\newcommand\mtop{.95in}
\newcommand\mbottom{.95in}
\newcommand\mleft{1in}
\newcommand\mright{1in}
\newtheorem{thm}{Theorem}[section]
\newtheorem{example}[thm]{Example}
\newtheorem{prop}[thm]{Proposition}
\newtheorem{lemma}[thm]{Lemma}
\theoremstyle{definition}
\newtheorem{defi}{Definition}
\newtheorem{rmk}{Remark}
\newcommand\reallywidehat[1]{%
\savestack{\tmpbox}{\stretchto{%
  \scaleto{%
    \scalerel*[\widthof{\ensuremath{#1}}]{\kern-.6pt\bigwedge\kern-.6pt}%
    {\rule[-\textheight/2]{1ex}{\textheight}}
  }{\textheight}%
}{0.5ex}}%
\stackon[1pt]{#1}{\tmpbox}%
}
\DeclareSymbolFont{bbold}{U}{bbold}{m}{n}
\DeclareSymbolFontAlphabet{\mathbbold}{bbold}
\def\@tocline#1#2#3#4#5#6#7{\relax
  \ifnum #1>\c@tocdepth 
  \else
    \par \addpenalty\@secpenalty\addvspace{#2}%
    \begingroup \hyphenpenalty\@M
    \@ifempty{#4}{%
      \@tempdima\csname r@tocindent\number#1\endcsname\relax
    }{%
      \@tempdima#4\relax
    }%
    \parindent\z@ \leftskip#3\relax \advance\leftskip\@tempdima\relax
    \rightskip\@pnumwidth plus4em \parfillskip-\@pnumwidth
    #5\leavevmode\hskip-\@tempdima
      \ifcase #1
       \or\or \hskip 1em \or \hskip 2em \else \hskip 3em \fi%
      #6\nobreak\relax
    \hfill\hbox to\@pnumwidth{\@tocpagenum{#7}}\par
    \nobreak
    \endgroup
  \fi}
\DeclarePairedDelimiter{\abs}{\lvert}{\rvert}
\newcommand{\R}{\mathbb{R}}
\newcommand{\Z}{\mathbb{Z}}
\newcommand{\Q}{\mathbb{Q}}
\newcommand{\N}{\mathbb{N}}
\newcommand{\C}{\mathbb{C}}
\newcommand{\F}{\mathbb{F}}
\renewcommand{\H}{\mathbb{H}}
\renewcommand{\O}{\mathcal{O}}
\newcommand{\E}{\mathbb{E}}
\newcommand{\mc}{\mathcal}
\newcommand{\bbone}{\mathbbold{1}}
\renewcommand{\l}{\lambda}
\newcommand{\eps}{\epsilon}
\newcommand{\dderiv}[2]{\frac{d #1}{d #2}}
\newcommand{\la}{\left\langle}
\newcommand{\ra}{\right\rangle}
\newcommand{\tth}{^{th}}
\newcommand{\bY}{\bar{Y}}
\newcommand{\bZ}{\bar{Z}}
\newcommand{\M}{\mu_{Haar}}
\newcommand{\tl}{\tilde{\lambda}}
\newcommand{\tm}{\tilde{\mu}} 
\newcommand{\tk}{\tilde{\kappa}} 
\newcommand{\tc}{\tilde{c}} 
\newcommand{\bl}{{\bar{\lambda}}}
\renewcommand{\hl}{{\hat{\lambda}}}
\renewcommand{\hm}{{\hat{\mu}}}
\newcommand{\hk}{{\hat{\kappa}}}
\renewcommand{\L}{\Lambda}
\newcommand{\tT}{\tilde{T}}
\newcommand{\G}{G}
\newcommand{\K}{K}
\newcommand{\ba}{\mathbf{a}}
\newcommand{\bd}{\mathbf{d}}
\newcommand{\bx}{\mathbf{x}}
\newcommand{\mbr}{M_D^{branch}}
\newcommand{\mcj}{M_D^{Cauchy}}
\newcommand{\hx}{\hat{x}}
\newcommand{\tv}{\tilde{v}}
\newcommand{\bv}{\mathbf{v}}
\newcommand{\barv}{\bar{v}}
\newcommand{\tL}{\tilde{L}}
\newcommand{\bsomega}{\boldsymbol{\omega}}
\DeclareMathOperator{\Geom}{Geom}
\DeclareMathOperator{\len}{len}
\DeclareMathOperator{\Id}{Id}
\DeclareMathOperator{\Aut}{Aut}
\DeclareMathOperator{\GL}{GL}
\DeclareMathOperator{\Sp}{Sp}
\DeclareMathOperator{\U}{U}
\DeclareMathOperator{\coker}{coker}
\DeclareMathOperator{\Sig}{Sig}
\DeclareMathOperator{\Proj}{Proj}
\DeclareMathOperator{\GT}{GT}
\DeclareMathOperator{\SN}{SN}
\DeclareMathOperator{\diag}{diag}
\DeclareMathOperator{\Supp}{Supp}
\title[Limits and fluctuations of $p$-adic matrix products]{Limits and fluctuations of $p$-adic random matrix products}
\author{Roger Van Peski}
\date{\today}
\thanks{I thank my advisor Alexei Borodin for many helpful conversations throughout this project and feedback on several drafts; Vadim Gorin, for early encouragement, pointers to the matrix product literature, and other helpful discussions and comments; and Andrew Ahn, for many illuminating discussions on the complex case and its relation to this work, and detailed feedback on the exposition. Some preliminary material appeared in my unpublished undergraduate thesis \cite{vanpeski2018}, and it is my pleasure to thank Ju-Lee Kim for her guidance on that project. I also wish to thank Adam Block, Jason Fulman, Nathan Kaplan, Mario Kieburg, Sergei Korotkikh, and Kevin Lin for additional helpful conversations. This material is based on work supported by the National Science Foundation Graduate Research Fellowship under Grant No. \#$1745302$. 
}
\begin{document}

\maketitle

\begin{abstract} 
We show that singular numbers (also known as elementary divisors, invariant factors or Smith normal forms) of products and corners of random matrices over $\mathbb{Q}_p$ are governed by the Hall-Littlewood polynomials, in a structurally identical manner to the known relations between singular values of complex random matrices and Heckman-Opdam hypergeometric functions. This implies that the singular numbers of a product of corners of Haar-distributed elements of $\mathrm{GL}_N(\mathbb{Z}_p)$ form a discrete-time Markov chain distributed as a Hall-Littlewood process, with the number of matrices in the product playing the role of time. We give an exact sampling algorithm for the Hall-Littlewood processes which arise by relating them to an interacting particle system similar to PushTASEP. By analyzing the asymptotic behavior of this particle system, we show that the singular numbers of such products obey a law of large numbers and their fluctuations converge dynamically to independent Brownian motions. In the limit of large matrix size, we also show that the analogues of the Lyapunov exponents for matrix products have universal limits within this class of $\mathrm{GL}_N(\mathbb{Z}_p)$ corners.
\end{abstract}

\textbf{Keywords: }\keywords{$p$-adic random matrices, Hall-Littlewood polynomials, particle systems}

\textbf{Mathematics Subject Classification (2020): }\subjclass{15B52 (primary); 15B33, 60B20 (secondary)}

\tableofcontents
\section{Introduction}

\subsection{Asymptotic results.}

For any nonsingular complex matrix $A \in M_{n \times m}(\C)$, by singular value decomposition there exist $U \in U(n), V \in U(m)$ with $UAV = \diag(e^{-r_1},e^{-r_2},\ldots,e^{-r_{\min(m,n)}})$ for some $\infty > r_1 \geq \cdots \geq r_{\min(m,n)}$. Studying the distributions of the singular values of various random matrices $A$, and their asymptotics, is a classical but still very active line of research.

For any nonsingular $p$-adic matrix\footnote{For background on the $p$-adic numbers and matrix groups over them, see \Cref{sec:3}.} $A \in M_{n \times m}(\Q_p)$, there similarly exist $U \in \GL_n(\Z_p), V \in \GL_m(\Z_p)$ such that $UAV = \diag(p^{\l_1},p^{\l_2},\ldots,p^{\l_{\min(m,n)}})$ for some integers $\infty > \l_1 \geq \ldots \geq \l_{\min(m,n)}$. We refer to the integers $\l_i$ as the \emph{singular numbers} of $A$ and write $\SN(A) = (\l_1,\ldots,\l_{\min(m,n)}) = \l$ in the above case. One can study the distribution of $\SN(A)$ for random $A \in M_{n \times m}(\Q_p)$ just as with singular values. 

Such distributions have received significant attention within number theory, starting with the 1987 work of Friedman-Washington \cite{friedman-washington}. They computed the $n \to \infty$ limiting distribution of $\SN(A)$ for $A \in M_n(\Z_p)$ with iid entries distributed according to the additive Haar measure on $\Z_p$, which may be viewed as the analogue of the Gaussian in this setting. They found that it matched the distribution of the $p$-torsion parts of class groups\footnote{Here we view $A$ as a map $\Z_p^n \to \Z_p^n$ and identify $\l = \SN(A)$ with the abelian $p$-group $\coker(A) \cong \bigoplus_i \Z/p^{\l_i}\Z$.} of quadratic imaginary number fields conjectured by Cohen and Lenstra \cite{cohen-lenstra} in 1983 on the basis of numerical data, and gave a heuristic explanation why this was the case. This launched many subsequent works on modeling groups appearing in number theory by suitable random $p$-adic matrices, see for instance Achter \cite{achter2006distribution}, Bhargava-Kane-Lenstra-Poonen-Rains \cite{bhargava2013modeling}, Ellenberg-Jain-Venkatesh \cite{ellenberg2011modeling}, Wood \cite{wood2015random,wood2018cohen} and particularly the survey \cite{woodexpos}. 

The asymptotic distributions obtained in the above works look quite different from their counterparts in the world of singular values. For instance, \cite{friedman-washington} show that for any nonnegative integers $\l_1 \geq \l_2 \geq \ldots \geq 0$ and random $A_n \in M_n(\Z_p)$ as above, one has
\begin{equation*}
    \lim_{n \to \infty} \Pr(\SN(A_n) = \l) = \frac{(p^{-1};p^{-1})_\infty}{\abs*{\Aut(\bigoplus_i \Z/p^{\l_i}\Z)}}.
\end{equation*}
In particular, the limiting $\l_i$ are integers, and with probability $1$ only a finite number of the $\l_i$ are nonzero. The motivating questions in number theory concern group-theoretic properties such as the probability of cyclicity (i.e. probability that $\SN(A) = (k,0,\ldots,0)$ for some $k$) or the distribution of ranks (the number of nonzero parts of $\l=\SN(A)$).

By contrast, the singular values of an $n \times n$ matrix with iid standard Gaussian entries, usually referred to as the \emph{Ginibre ensemble}, converge with rescaling to the celebrated Marchenko-Pastur law \cite{marchenko1967distribution} (a compactly supported probability distribution on $\R$). As far as we are aware, no continuous probability distributions on $\R$ appeared previously governing limits of singular numbers of random $p$-adic matrices. Indeed, such limits are in a sense orthogonal to the viewpoint of random abelian $p$-groups taken in most of the previous literature.

In this work we find Gaussian limits in the setting of products of a large number of $p$-adic matrices of finite size. Given random matrices $A_1, A_2, \ldots \in M_n(\Q_p)$, one may view the $\SN(A_1), \SN(A_2 A_1), \ldots$ as defining a discrete-time Markov chain on the set of weakly decreasing $n$-tuples of integers. Equivalently, for each $i$ the $i\tth$ largest singular number evolves as some random walk on $\Z$, with the $n$ such random walks sometimes colliding but never crossing. See \Cref{fig:random_walks} below.

In the case when $A_i$ are $n \times n$ corners of independent Haar-distributed elements of $\GL_N(\Z_p)$ with $N>n$, we show that the singular numbers of their products satisfy an explicit law of large numbers as $k \to \infty$, and furthermore the fluctuations converge to $n$ independent Brownian motions. In the limit as $N \to \infty$, the entries of such corners become independent and distributed according to the additive Haar measure on $\Z_p$, recovering the matrices studied in the previous literature, so we allow the case `$N=\infty$' below.

\begin{restatable}{thm}{llncltrmt}\label{thm:lln_and_func_clt_rmt_version}
Fix $n \geq 1$, and let $N_1,N_2,\ldots \in \Z \cup \{\infty\}$ with $N_j > n$ for all $j$. For each $j$, if $N_j < \infty$ let $A_j$ be the top left $n \times n$ corner of a Haar distributed element of $\GL_{N_j}(\Z_p)$, and if $N_j = \infty$ let $A_j$ have iid entries distributed by the additive Haar measure on $\Z_p$. For $k \in \N$ let
\begin{equation*}
(\l_1(k),\ldots,\l_n(k)) := \SN(A_k \cdots A_1).    
\end{equation*} 
Then we have a strong law of large numbers
\begin{equation*}
    \frac{\l_i(k)}{\sum_{j=1}^k \sum_{\ell=0}^{N_j-n-1} \frac{p^{-i-\ell}(1-p^{-1})}{(1-p^{-i-\ell-1})(1-p^{-i-\ell})}} \to 1 \text{       a.s.  as $k \to \infty$.}
\end{equation*}
Let 
\begin{equation*}
\bl_i(k) := \l_i(k) - \sum_{j=1}^k \sum_{\ell=0}^{N_j-n-1} \frac{p^{-i-\ell}(1-p^{-1})}{(1-p^{-i-\ell-1})(1-p^{-i-\ell})}    
\end{equation*}
and define the random function of $f_{\bl_i,k} \in C[0,1]$ as follows: set $f_{\bl_i,k}(0)=0$ and
\begin{equation*}
        (f_{\bl_i,k}(1/k),f_{\bl_i,k}(2/k),\ldots,f_{\bl_i,k}(1)) = \frac{1}{\sqrt{\sum_{j=1}^k \sum_{\ell=0}^{N_j-n-1} \frac{p^{-i-\ell}(1-p^{-1})(1-p^{-2i-2\ell-1})}{(1-p^{-i-\ell})^2(1-p^{-i-\ell-1})^2}}}(\bl_i(1),\ldots,\bl_i(k)),
\end{equation*}
then linearly interpolate from these values on each interval $[\ell/k,(\ell+1)/k]$. Then as $k \to \infty$, the $n$-tuple of random functions $(f_{\bl_1,k},\ldots,f_{\bl_n,k})$ converges in law in the sup norm topology on $C[0,1]$ to $n$ independent standard Brownian motions. 
\end{restatable}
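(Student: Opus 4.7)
The plan is to exploit the description of $(\SN(A_k \cdots A_1))_{k \geq 1}$ as a Hall-Littlewood process and its realization as a PushTASEP-type interacting particle system developed earlier in the paper, and to show that asymptotically the particles decouple into $n$ independent random walks, at which point the classical strong LLN and Donsker invariance principle yield the result. Unpacking the particle system, each multiplication by $A_j$ decomposes into $N_j - n$ elementary update steps indexed by $\ell \in \{0,\ldots,N_j-n-1\}$; at the $\ell\tth$ elementary step, particle $\l_i$ would, in isolation, receive an independent geometric-type increment whose law depends on $p^{-i-\ell}$, and the actual PushTASEP update couples adjacent particles through a pushing rule preserving the weakly decreasing ordering. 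I introduce a \emph{free} version $(\hl_1(k),\ldots,\hl_n(k))$ by running the same elementary noise \emph{without} any push, so that each $\hl_i(k)$ is a sum of $\sum_{j=1}^k (N_j-n)$ independent random variables. Direct computation identifies the per-step mean as $\frac{p^{-i-\ell}(1-p^{-1})}{(1-p^{-i-\ell-1})(1-p^{-i-\ell})} = \frac{1}{1-p^{-i-\ell}} - \frac{1}{1-p^{-i-\ell-1}}$, matching the drift in the statement, and similarly the per-step variance matches the CLT normalization. The $n$ free walks are independent across $i$ by construction, so Kolmogorov's strong LLN and Donsker's invariance principle applied coordinate-wise give the desired almost-sure LLN and $C[0,1]$-convergence to $n$ independent Brownian motions for the free process.

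To carry these conclusions to the true process I would construct a simultaneous coupling of both processes driven by the same elementary noise and show that $|\hl_i(k) - \l_i(k)| = O(1)$ almost surely for each $i$. The decisive input is that the free drifts are strictly decreasing in $i$, so the free gaps $\hl_i(k) - \hl_{i+1}(k)$ grow linearly at positive deterministic rates. Combining large-deviation bounds on these gaps with Borel-Cantelli then shows that only finitely many collision events involving level $i$ occur almost surely, and since each collision changes $\hl_i - \l_i$ by at most a bounded amount, the total discrepancy is $O(1)$. As $O(1)$ corrections are negligible on both the $k$-scale of the LLN and the $\sqrt{k}$-scale of the Brownian CLT, both statements transfer from $\hl$ to $\l$.

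The main obstacle is this coupling step. Concretely one must (i) make explicit the PushTASEP dynamics inherited from the Hall-Littlewood sampling algorithm in a form admitting a simultaneous coupling under which the free law is actually realized as the marginal law of $\hl_i$; (ii) verify that a single collision perturbs $\hl_i - \l_i$ by at most a bounded amount and in a monotone direction; and (iii) obtain large-deviation bounds on $\Pr(\hl_i(k) - \hl_{i+1}(k) \leq C)$, uniform in the allowed sequences $(N_j)$, sharp enough for Borel-Cantelli. Each is standard in spirit but requires careful matching with the specific combinatorics of the Hall-Littlewood push dynamics; once these are in place, the remaining LLN and Donsker inputs are routine.
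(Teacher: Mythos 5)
Your proposal follows essentially the same strategy as the paper: realize $(\SN(A_k\cdots A_1))_k$ as a Hall--Littlewood process, interpret it as a PushTASEP-type particle system on $\Z$ (\Cref{prop:sampling_alg}), couple it to a free (non-interacting) version $\bv(k)$ driven by the same elementary noise, show the drifts are strictly decreasing in $i$ so the free gaps grow linearly, use fourth-moment/Markov $+$ Borel--Cantelli to show finitely many overlaps (\Cref{lem:finitely_many_overlaps}), conclude $\sup_k|\l_i(k)-v_i(k)|<\infty$ a.s.\ (\Cref{prop:coupling_error}), and then apply Kolmogorov's strong LLN and a Donsker invariance principle to the free process. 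The identification of per-step mean and variance is exactly \Cref{lem:facts_about_differences}, and the drift/variance formulas you quote agree with the statement. Up to that point you have correctly rediscovered the structure of the proof.

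The place where your outline undersells the difficulty is in step (ii)--(iii), i.e.\ the argument that the total discrepancy $|\hl_i(k)-\l_i(k)|$ is a.s.\ bounded uniformly in $k$. The naive version you describe --- ``free gaps grow linearly $\Rightarrow$ finitely many collisions $\Rightarrow$ total discrepancy is $O(1)$'' --- has a circularity problem: the collisions you must control are those of the \emph{true} interacting process $\l$, whose particle positions have already been perturbed by earlier pushes and are therefore not a priori comparable to the free positions $\hl_i$. A push of particle $i+1$ into particle $i$ shifts $\l_i$ relative to $\hl_i$, which in turn alters whether and when particle $i$ collides with particle $i-1$, and so on up the system; controlling this cascade by the free-gap statistics alone is exactly what one is trying to prove. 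The paper resolves this with a nontrivial inductive coupling through intermediate hybrid processes $\l^{(j)}$ in which only the last $j$ particles interact: one first establishes monotone sandwich inequalities \eqref{eq:ind_lower_bound}--\eqref{eq:ind_upper_bound} between consecutive hybrids, then uses these together with \Cref{lem:finitely_many_overlaps} to control overlaps of $\l^{(j+1)}$ via the \emph{free} process, and finally propagates the bound to the remaining particles through the conservation-of-momentum identity \eqref{eq:back_and_forward_pushing_equality}. This inductive/sandwiching structure is the real content of \Cref{prop:coupling_error}, not a routine ``careful matching with combinatorics.'' A secondary point: the per-collision perturbation is not deterministically bounded (a collision can donate an unbounded geometric impulse); what matters is that it is a.s.\ finite and that there are finitely many such events, which the paper handles via the above induction. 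Finally, since the increments are independent but not identically distributed, one needs a Lindeberg/Lyapunov version of Donsker (the paper cites the martingale functional CLT and verifies the Lyapunov condition via the uniform fourth-moment bound), plus a small weak-convergence transfer lemma (\Cref{lem:basic_weak_convergence_fact}) to pass from $\bv$ to $\l$ in $C[0,1]$; your proposal implicitly needs both but does not flag them.
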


\begin{figure}[hbtp]
  \centering
    \includegraphics[width=.9\textwidth]{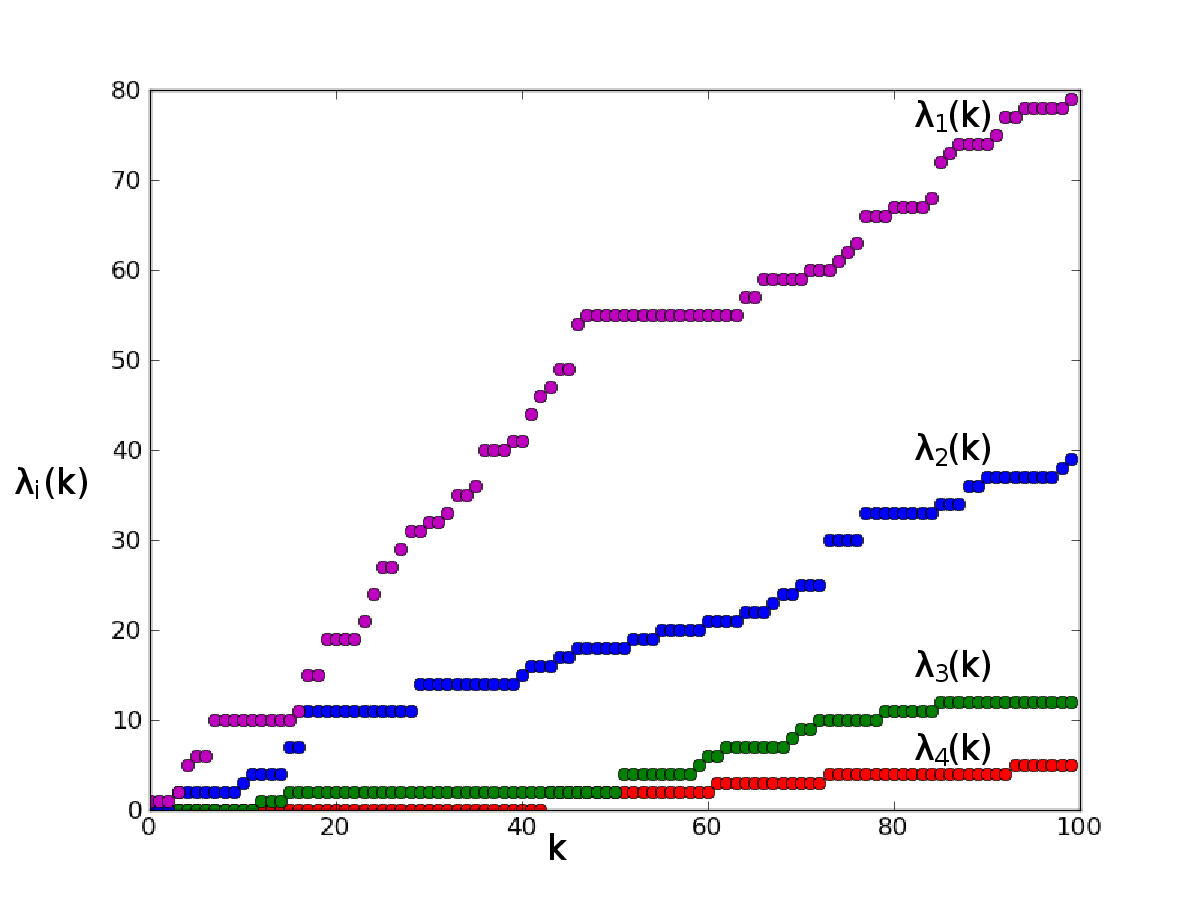}
\caption{A plot of $(\l_1(k),\l_2(k),\l_3(k),\l_4(k)) = \SN(A_k \cdots A_1)$ where $A_1,\ldots,A_{100} \in M_4(\Z_2)$ are random matrices with iid entries drawn from the additive Haar measure on $\Z_2$.}
\label{fig:random_walks}
\end{figure}

In particular, we have the central limit theorem that
\begin{equation*}
    \frac{\bl_i(k)}{\sqrt{\sum_{j=1}^k \sum_{\ell=0}^{N_j-n-1} \frac{p^{-i-\ell}(1-p^{-1})(1-p^{-2i-2\ell-1})}{(1-p^{-i-\ell})^2(1-p^{-i-\ell-1})^2}}} \to \mc N(0,1)
\end{equation*}
in law for each $i$. 
\begin{rmk}
If all $N_j$ are equal to some $N$, then the law of large numbers takes the more standard form
\begin{equation*}
    \frac{\l_i(k)}{k} \to \sum_{\ell=0}^{N-n-1} \frac{p^{-i-\ell}(1-p^{-1})}{(1-p^{-i-\ell-1})(1-p^{-i-\ell})}.
\end{equation*}
When $N=\infty$, evaluating the sum via the $q$-Gauss identity \cite[(3.5)]{koepf1998hypergeometric} yields an even more explicit limit:
\begin{equation*}
    \frac{\l_i(k)}{k} \to \frac{1}{p^i-1}.
\end{equation*}
\end{rmk}

The matrices $A_i$ above lie in $\GL_n(\Q_p)$ with probability $1$, so \Cref{thm:lln_and_func_clt_rmt_version} may be viewed as a statement about certain random walks on the group $\GL_n(\Q_p)$. Previous work by Brofferio-Schapira \cite{brofferio2011poisson} takes this perspective of random walks on groups and studies similar random walks from an ergodic theory perspective. They prove a law of large numbers for products of iid random matrices from a quite general class of probability distributions on $\GL_n(\Q_p)$ via a generalization of Oseledets' multiplicative ergodic theorem \cite{oseledets1968multiplicative} to matrices over $\Q_p$, due to Raghunathan \cite{raghunathan1979proof}. The family of probability distributions considered in \cite{brofferio2011poisson} is more general than that of \Cref{thm:exact_hl_results_in_p-adic_rmt}, but the latter covers cases when the matrices $A_i$ are not identically distributed and shows Gaussian fluctuations, which are not shown in \cite{brofferio2011poisson}. A different family of random walks on $\GL_n(\Q_p)$ were studied by Chhaibi \cite{chhaibi2017non}; the perspective in this work is more similar to ours in that it is heavily based on special functions on $p$-adic groups, though the presentation and problems considered are quite different. Finally, there is a body of work on random walks on Bruhat-Tits buildings which translates to results on random walks on $p$-adic groups, see for instance Cartwright-Woess \cite[Section 8]{cartwright2004isotropic}, Schapira \cite{schapira2009random}, and especially the survey of Parkinson \cite{parkinson2017buildings}. These works contain central limit theorems, but for quite different quantities and settings than ours.

In the setting of real and complex matrices, the study of asymptotics of singular values of products $A_k A_{k-1} \cdots A_1$ of random matrices as $k \to \infty$ dates back at least as far as the 1960 work of Furstenberg and Kesten \cite{furstenberg1960products}, who showed Gaussian fluctuations for the logarithm of the largest singular value under some assumptions on the $A_i$. In the case where the $A_i$ are iid square with complex Gaussian entries, Gaussian fluctuations for logarithms of all singular values (not just the largest as in \cite{furstenberg1960products}) were obtained in the physics literature by Akemann-Burda-Kieburg \cite{akemann2014universal} and in the mathematics literature by Liu-Wang-Wang \cite[Thm. 1.1]{liu2018lyapunov}. The case of products of $n \times n$ corners of unitary matrices, often referred to as the \emph{truncated unitary ensemble}, has also been considered; the term Jacobi ensemble is also sometimes used for such corners due to the relation with the classical Jacobi orthogonal polynomial ensemble. Dynamical convergence of the logarithm of the largest singular value of $A_k \cdots A_1$ to Brownian motion for products of such unitary corners is suggested, though not directly implied, by the work of Ahn (one should send $\hat{T} \to \infty$ in \cite[Thm. 1.7]{ahn2019fluctuations}). Together these results strongly suggest that as the number of products goes to infinity, the fluctuations of the $n$ log-singular values should converge to $n$ independent Brownian motions, as holds in the $p$-adic case by \Cref{thm:lln_and_func_clt_rmt_version}, but this full result has to our knowledge not appeared. 

There is now a large body of both mathematics and physics literature on asymptotics of matrix products in various regimes, often from the perspective of ergodic theory and often motivated by connections to chaotic dynamical systems and disordered systems in statistical physics, neural networks, and other areas. See for example Ahn \cite{ahn2019fluctuations,ahn2020product}, Akemann-Burda-Kieburg \cite{akemann2014universal,akemann2019integrable,akemann2020universality}, Akemann-Ipsen \cite{akemann2015recent}, Akemann-Ipsen-Kieburg \cite{akemann2013products}, Akemann-Kieburg-Wei \cite{akemann2013singular}, Crisanti-Paladin-Vulpiani \cite{crisanti2012products}, Forrester \cite{forrester2015asymptotics} and Forrester-Liu \cite{forrester2016singular}, Gol'dsheid-Margulis \cite{gol1989lyapunov}, Gorin-Sun \cite{gorin2018gaussian}, Kieburg-K{\"o}sters \cite{kieburg2019products}, and Liu-Wang-Wang \cite{liu2018lyapunov}. Such considerations motivate the study of the \emph{Lyapunov exponents}, so named because of the connection with dynamical systems: given random complex matrices $A_1,A_2,\ldots$, the $i\tth$ Lyapunov exponent is defined as
\begin{equation*}
         \lim_{k \to \infty} \frac{1}{k} \log(i\tth\text{ largest singular value of }A_k \cdots A_1).
\end{equation*}
In the limit as the sizes of the $A_i$ grows, the largest Lyapunov exponents converge (with appropriate scaling) to an evenly spaced sequence $0,-1,-2,\ldots$ in several known cases. Note that this statement has no content if one may scale each Lyapunov exponent individually, but it is quite surprising that applying the same additive shift and multiplicative scaling to all of the Lyapunov exponents together produces this evenly spaced sequence. For complex Ginibre matrices this convergence statement is an easy corollary of results of Liu-Wang-Wang\footnote{To obtain $0,-1,-2,\ldots$, take $N \to \infty$ in (1.7) of \cite{liu2018lyapunov} with scaling and use that the digamma function $\psi(z)$ is asymptotic to $\log(z)$.} \cite{liu2018lyapunov}. For products of corners of Haar-distributed unitary matrices and Ginibre matrices, the result will appear soon \cite{lyapunovnote}. Such limits also appear in multiplicative Dyson Brownian motion on complex matrices, see \cite{jones2006weyl}, to which these matrix product processes may be viewed as a kind of discrete random walk approximation. These examples seem to support the notion that such evenly spaced Lyapunov exponents are universal, though we are not aware of a precise conjecture in the literature regarding the scope of this class.

In the $p$-adic case, we are able to prove that at least within the class of truncated $\GL_N(\Z_p)$ matrices--and iid additive Haar matrices, by the $N \to \infty$ limit)--the appropriate analogues of Lyapunov exponents have universal limits. Consider random $A_1,A_2,\ldots \in M_n(\Q_p)$. For appropriate $U,V \in \GL_n(\Z_p)$ such that $U(A_k \cdots A_1)V = \diag(p^{\l_1(k)},\ldots,p^{\l_n(k)})$ with $\l_1(k) \geq \ldots \geq \l_n(k)$, we have that the $i\tth$ \emph{smallest} part $\l_{n-i+1}(k)$ of $\SN(A_k \cdots A_1)$ is the analogue of $-\log(\text{$i\tth$ \emph{largest} singular value})$ in the complex setting, because $p^{D}$ is small in the $p$-adic norm for large $D$. Hence 
\begin{equation*}
    \lim_{k \to \infty} \frac{\l_{n-i+1}(k)}{k}
\end{equation*} 
should be regarded as the appropriate analogue of the $i\tth$ Lyapunov exponent in the $p$-adic setting. Our next result shows that within the class of products of arbitrary Haar corners, these analogues of Lyapunov exponents converge to values $1,p,p^2,\ldots$ in geometric progression, much like the arithmetic progression $0,-1,-2,\ldots$ in the complex case mentioned previously.

\begin{restatable}[{Large-$n$ universality of Lyapunov exponents}]{thm}{lyapunov} \label{thm:lyapunov}
For each $n \in \N$, let $N_1^{(n)}, N_2^{(n)},\ldots \in \Z_{\geq 0} \cup \{\infty\}$ be such that $N_j^{(n)} > n$ and the limiting frequencies
\begin{equation*}
       \rho_n(N) := \lim_{k \to \infty} \frac{|\{1 \leq j \leq k: N_j^{(n)} = N\}|}{k}
\end{equation*}
exist for all $N > n$. Let $A_j^{(n)}$ be $n \times n$ corners of independent Haar distributed matrices in $\GL_{N_j^{(n)}}(\Z_p)$ (with the case $N_j^{(n)}=\infty$ treated as in \Cref{thm:lln_and_func_clt_rmt_version}). Then for each $n$, the Lyapunov exponents
\begin{equation*}
    L_i^{(n)} := \lim_{k \to \infty} \frac{\l_{n-i+1}(k)}{k}
\end{equation*}
exist almost surely, where $\l_{n-i+1}(k)$ is as in \Cref{thm:lln_and_func_clt_rmt_version}. Furthermore, the Lyapunov exponents have limits
\begin{equation*}
    \lim_{n \to \infty} \frac{L_i^{(n)}}{p^{-n}(1-c(n))} = p^{i-1}
\end{equation*}
for every $i$, where $c(n) := \sum_{N > n} \rho_n(N) p^{-(N-n)}$.
\end{restatable}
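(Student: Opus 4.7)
The plan is to apply the strong law of large numbers in \Cref{thm:lln_and_func_clt_rmt_version} to express $L_i^{(n)}$ as a Cesaro limit, then perform an asymptotic expansion as $n \to \infty$. Write $i' := n - i + 1$ and, for $N > n$ (including $N = \infty$),
\[
S_{i'}(N) := \sum_{\ell = 0}^{N-n-1} \frac{p^{-i'-\ell}(1-p^{-1})}{(1-p^{-i'-\ell-1})(1-p^{-i'-\ell})},
\]
with the $N = \infty$ value interpreted as the corresponding convergent infinite series. \Cref{thm:lln_and_func_clt_rmt_version} applied to the $(n-i+1)$-st singular number gives $\l_{n-i+1}(k)/\sum_{j=1}^{k} S_{i'}(N_j^{(n)}) \to 1$ almost surely, so $L_i^{(n)} = \lim_{k \to \infty} \tfrac{1}{k}\sum_{j=1}^k S_{i'}(N_j^{(n)})$ once the latter deterministic Cesaro limit is shown to exist.

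To compute the limit, I would set $R_{i'}(N) := S_{i'}(\infty) - S_{i'}(N)$ (so $R_{i'}(\infty) = 0$ and for finite $N$, $R_{i'}(N) = \sum_{\ell = N-n}^{\infty}$ of the same summand). Since $p^{-i'-\ell-1}, p^{-i'-\ell} \leq p^{-(n-i+1)}$ uniformly in $\ell \geq 0$, we have $(1-p^{-i'-\ell-1})^{-1}(1-p^{-i'-\ell})^{-1} = 1 + O(p^{-n})$ uniformly, and summing geometric series gives $S_{i'}(\infty) = (1+O(p^{-n}))\, p^{-i'}$ and $R_{i'}(N) = (1+O(p^{-n}))\, p^{-i'} p^{-(N-n)}$ for finite $N$. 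Thus $R_{i'}/p^{-i'}$ is a bounded continuous function on the one-point compactification $\{n+1, n+2, \ldots\} \cup \{\infty\}$ (vanishing at $\infty$), so its Cesaro mean against the empirical measures $k^{-1}\sum_j \delta_{N_j^{(n)}}$ converges—by weak convergence in the compactification, which uses the assumed existence of the $\rho_n(N)$—to $\sum_{N > n,\ N < \infty} \rho_n(N)\,(1 + O(p^{-n})) p^{-(N-n)} = (1+O(p^{-n}))\, c(n)$. Combining,
\[
L_i^{(n)} = S_{i'}(\infty) - p^{-i'} (1+O(p^{-n}))\, c(n) = (1+O(p^{-n}))\, p^{i-1} \cdot p^{-n}(1 - c(n)).
\]
Because $c(n) \leq p^{-1} < 1$, the factor $1 - c(n)$ is bounded below, so dividing by $p^{-n}(1-c(n))$ and sending $n \to \infty$ yields $L_i^{(n)}/(p^{-n}(1-c(n))) \to p^{i-1}$, as claimed.

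The main technical point I expect to have to handle carefully is the passage from pointwise frequencies $\rho_n(N)$ to the Cesaro integral: the hypothesis does not directly give $\sum_N \rho_n(N) = 1$, since a positive fraction of the $N_j^{(n)}$ could escape to arbitrarily large finite values rather than being tagged $\infty$. The one-point compactification circumvents this, because the integrand $R_{i'}/p^{-i'}$ extends continuously to $0$ at $\infty$, so any escaping mass contributes nothing to the integral—which is exactly what the definition $c(n) = \sum_{N < \infty} \rho_n(N) p^{-(N-n)}$ (taken over finite $N$ only) captures. Once this subtlety is addressed, the rest of the proof reduces to the uniform geometric series expansion above.
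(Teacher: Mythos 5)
Your proposal is correct and follows essentially the same route as the paper: both reduce $L_i^{(n)}$ to the deterministic Cesaro limit $\lim_{k\to\infty} k^{-1}\sum_{j=1}^k S_{i'}(N_j^{(n)})$ via the LLN of \Cref{thm:lln_and_func_clt_rmt_version}, then approximate each summand by a geometric term with error of size $O(p^{-2n})$ to identify the limit as $p^{i-1}p^{-n}(1-c(n))(1+o(1))$. Your one-point-compactification argument is a clean way to handle the mass of the empirical measures that is not captured by the pointwise frequencies $\rho_n(N)$; the paper relies on the same vanishing-at-infinity observation implicitly when passing from $k^{-1}\sum_j t^{N_j^{(n)}-n}$ to $c(n)$, though it states the justification more loosely (appealing only to boundedness of $\mu$).
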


We note that \Cref{thm:lyapunov} does not require any relation between the $N_j^{(n)}$ for different $n$; one can for example alternate $N_j^{(n)} = n+1$ for $n$ even and $N_j^{(n)} = \infty$ for $n$ odd, and the result still holds. We believe this behavior to be universal for the large-$n$ limits of products from a broad class of distributions on $M_n(\Z_p)$, though extending beyond the setting of \Cref{thm:lyapunov} would require new methods; in this setting the proof is not difficult using the explicit law of large numbers in \Cref{thm:lln_and_func_clt_rmt_version}. The latter theorem requires substantial work, however, and brings us into the realm of nonasymptotic random matrix theory.


\subsection{Nonasymptotic results and Hall-Littlewood polynomials.} Given $A \in M_{n \times m}(\Q_p)$ random with a distribution which is invariant under left- and right-multiplication by $\GL_n(\Z_p)$ and $\GL_m(\Z_p)$ respectively, one may ask the following.

\begin{enumerate}
    \item[(Q1)] For `natural' choices of the distribution of $A$, what is the distribution of $\SN(A)$? 
    
    \item[(Q2)] Let $A_{col}$ be the matrix given by removing the last column from $A$. What is the conditional distribution of $\SN(A_{col})$ given $\SN(A)$?
    
    \item[(Q3)] Let $B \in M_{m \times k}(\Q_p)$ be another random matrix. Given the distributions of $\SN(A),\SN(B)$, what is the distribution of $\SN(AB)$? 
\end{enumerate}

We give complete answers to (Q2), (Q3) and a family of cases of (Q1) in \Cref{thm:exact_hl_results_in_p-adic_rmt} below using relations between $p$-adic matrices and the classical Hall-Littlewood symmetric polynomials $P_\l(x_1,\ldots,x_n;t)$. The latter are a distinguished basis for the ring $\Lambda_n := \C[x_1,\ldots,x_n]^{S_n}$ of symmetric polynomials in $n$ variables $x_1,\ldots,x_n$, indexed by the set $\Sig_n^+ := \{(\l_1,\ldots,\l_n) \in \Z^n: \l_1 \geq \cdots \geq \l_n \geq 0\}$ of nonnegative \emph{integer signatures}, and feature an additional parameter $t$ which we take to be real. For a full definition, see \Cref{subsec:hl}, but using only the property that they form a basis and some positivity properties below, one can use them to define probability measures, Markov dynamics, and randomized convolution operations on signatures\footnote{These operations also make sense for signatures with some negative parts, but we refer to \Cref{sec:2} for conventions on these.}.
\begin{enumerate}
    \item (Probability measures) For real $a_1,\ldots,a_n \geq 0$, and $t \in [0,1)$ one has $P_\l(a_1,\ldots,a_n;t) \geq 0$. Hence for any sets $\{a_i\},\{b_i\}$ of nonnegative reals with all $a_ib_j < 1$one may define the \emph{Hall-Littlewood measure} on $\Sig_n^+$ via 
    \begin{equation}\label{eq:mac_measure_first}
        \Pr(\l) = \frac{1}{\Pi_{(0,t)}(a_1,\ldots,a_n;b_1,\ldots,b_n)}P_\l(a_1,\ldots,a_n;t) Q_\l(b_1,\ldots,b_n;t)
    \end{equation}
    where $\Pi_{(0,t)}(a_1,\ldots,a_n;b_1,\ldots,b_n)$ is a normalizing constant and $Q_\l$ is a certain constant multiple of $P_\l$, see \Cref{sec:2}. 
    
    \item (Markov dynamics) Because the $P_\l$ form a basis for the vector space of symmetric polynomials in $n$ variables, 
    \begin{equation*}
        P_\l(x_1,\ldots,x_n;t) = \sum_\mu P_{\l/\mu}(x_{k+1},\ldots,x_n;t) P_\mu(x_1,\ldots,x_{k};t)
    \end{equation*}
    for some symmetric polynomials $P_{\l/\mu} \in \L_{n-k}$, called \emph{skew Hall-Littlewood polynomials}. Substituting positive real numbers $a_i$ for the variables naturally yields Markov dynamics $\Sig_n^+ \to \Sig_{k}^+$ given by
    \begin{equation*}
        \Pr(\l \to \mu) = \frac{P_{\l/\mu}(a_{k+1},\ldots,a_n;t)P_\mu(a_1,\ldots,a_k;t)}{P_\l(a_1,\ldots,a_n;t)}.
    \end{equation*}

    \item (Product convolution) Again using that the $P_\l$ form a basis,
    \begin{equation*}
    P_\l(x_1,\ldots,x_n;t) \cdot P_\mu(x_1,\ldots,x_n;t) = \sum_{\nu} c_{\l,\mu}^\nu(0,t) P_\nu(x_1,\ldots,x_n;t)
    \end{equation*}
    for some structure coefficients $c_{\l,\mu}^\nu(0,t)$--these are often called \emph{Littlewood-Richardson coefficients}, particularly in the case $t=0$ corresponding to the classical Schur polynomials. One may then, given two fixed signatures $\l,\mu$, define their `convolution' $\l \boxtimes_{\ba} \mu$ (a random signature) by
    \begin{equation*}
        \Pr(\l \boxtimes_{\ba} \mu = \nu) = \frac{P_\nu(a_1,\ldots,a_n;t)}{P_\l(a_1,\ldots,a_n;t)P_\mu(a_1,\ldots,a_n;t)} c_{\l,\mu}^\nu(0,t)
    \end{equation*}
    for each $\nu \in \Sig_n$. Convolutions of signatures which are themselves random may be obtained from this by mixtures.
    
\end{enumerate}

We may now state the main structural result, which shows that the matrix operations of products and corners mirror the above operations on the level of symmetric functions.

\begin{restatable}{thm}{exactresults}\label{thm:exact_hl_results_in_p-adic_rmt}
Fix a prime $p$ and let $t=1/p$.
\begin{enumerate}[label={\arabic*.}]
    \item \label{item:thm_jacobi} \emph{(Truncated Haar ensemble)} Let $1 \leq n \leq m \leq N$ be integers, and $A$ be the top-left $n \times m$ submatrix of a Haar-distributed element of $\GL_N(\Z_p)$. Then $\SN(A)$ is a random nonnegative signature with distribution given by the Hall-Littlewood measure
    \begin{equation}\label{eq:p-adic_jacobi}
        \Pr(\SN(A) = \l) = \frac{P_\l(1,t,\ldots,t^{n-1};t)Q_\l(t^{m-n+1},\ldots,t^{N-n};t)}{\Pi_{(0,t)}(1,t,\ldots,t^{n-1};t^{m-n+1},\ldots,t^{N-n})}.
    \end{equation}
    
    \item \label{item:thm_corners}\emph{(Corners process)} Let $n,k,N$ be integers with $1 \leq n  \leq N$ and $1 \leq k \leq N-n$, $\l \in \Sig_n$, and $A \in M_{n \times N}(\Q_p)$ be random with $\SN(A)=\l$ and distribution invariant under $\GL_n(\Z_p) \times \GL_N(\Z_p)$ acting on the right and left. Let $A_{col} \in M_{n \times (N-k)}(\Q_p)$ be the first $N-k$ columns of $A$. Then $\SN(A_{col})$ is a random element of $\Sig_n$ with distribution given by
    \begin{equation}\label{eq:p-adic_cauchy_corners}
        \Pr(\SN(A_{col}) = \nu) = \frac{Q_{\nu/\l}(1,\ldots,t^{-(k-1)};t)P_\nu(t^{N-n},\ldots,t^{N-1};t)}{P_\l(t^{N-n},\ldots,t^{N-1};t) \Pi_{(0,t)}(1,\ldots,t^{-(k-1)};t^{N-n},\ldots,t^{N-1})}.
    \end{equation}
    
    Now let $1 \leq d \leq n$ and $A_{row} \in M_{(n-d) \times N}$ be the first $n-d$ rows of $A$. Then $\SN(A_{row})$ is a random element of $\Sig_{n-d}$ with distribution
    \begin{equation}\label{eq:p-adic_corners}
        \Pr( \SN(A_{row}) = \mu) = P_{\l/\mu}(1,\ldots,t^{d-1};t) \frac{P_\mu(t^d,\ldots,t^{n-1};t)}{P_\l(1,\ldots,t^{n-1};t)}.
    \end{equation} 
    \item \label{item:thm_product} \emph{(Product process)} Let $A,B$ be random elements of $M_n(\Q_p)$ with fixed singular numbers $\SN(A) = \l,\SN(B)=\mu$, invariant under left- and right-multiplication by $\GL_n(\Z_p)$. Then for any $\nu \in \Sig_n$, $\SN(AB)$ has distribution $\l \boxtimes_{(1,\ldots,t^{n-1})} \mu$, i.e.
    \begin{equation}\label{eq:p-adic_product_conv}
         \Pr(\SN(AB) = \nu)  = c^{\nu}_{\l, \mu}(0,t) \frac{P_\nu(1,\ldots,t^{n-1};t)}{ P_\l(1,\ldots,t^{n-1};t)P_\mu(1,\ldots,t^{n-1};t)}.
    \end{equation} 
\end{enumerate}
\end{restatable}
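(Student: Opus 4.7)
The plan is to reduce each part to a counting statement about finite abelian $p$-groups via the Hall algebra, and then identify the counts with Hall-Littlewood quantities using Macdonald's classical dictionary. The unifying observation is that bi-$\GL(\Z_p)$-invariance of the matrix distribution means the answer depends only on the isomorphism class of the cokernel $\coker(A) \cong \bigoplus_i \Z/p^{\l_i}\Z \cong G_\l$ (for square $A$) or of the relevant quotient module (for rectangular $A$), weighted by appropriate automorphism counts. These counts are exactly what the specialization $t = 1/p$ extracts from principally specialized Hall-Littlewood polynomials $P_\l(1,t,\ldots,t^{n-1};t)$, their skew analogues $P_{\l/\mu}$, and the structure constants $c_{\l,\mu}^\nu(0,t)$.

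For Part 1, I would reduce the Haar measure on $\GL_N(\Z_p)$ modulo $p^M$ for $M$ large to the uniform measure on $\GL_N(\Z/p^M\Z)$, so that $\Pr(\SN(A)=\l)$ becomes a ratio of finite cardinalities. Parametrizing matrices with Smith normal form $\diag(p^{\l_i})$ padded by zeros via a quotient of $\GL_n(\Z_p) \times \GL_m(\Z_p)$ by the stabilizer, and computing the stabilizer as the automorphism group of $G_\l \oplus \Z_p^{m-n}$, the probability becomes an automorphism ratio. Matching against Macdonald's explicit formula for the principally specialized $P_\l$ and $Q_\l$ (Macdonald, Chapter III) yields the stated Hall-Littlewood measure.

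For Part 2, I would use invariance to replace $A$ by a canonical padded diagonal matrix with entries $p^{\l_i}$ and then average over right-multiplication (resp. left-multiplication) by a Haar element of $\GL_N(\Z_p)$ (resp. $\GL_n(\Z_p)$). The column truncation then becomes a random coordinate projection of $G_\l \oplus \Z_p^{N-n}$ onto an $(N-k)$-dimensional $\Z_p$-free quotient, and the distribution of the isomorphism type of the image is the classical Pieri-type count for Hall-Littlewood polynomials, giving the skew factor $Q_{\nu/\l}$. Row truncation is the dual computation and gives $P_{\l/\mu}$. The normalizing constants in \eqref{eq:p-adic_cauchy_corners} and \eqref{eq:p-adic_corners} are pinned down by substituting Part 1 applied to $A_{col}$ (respectively $A_{row}$).

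For Part 3, bi-invariance lets me set $A = \diag(p^\l)$ and write $B = k_2 \diag(p^\mu) k_3$ for independent Haar $k_2,k_3 \in \GL_n(\Z_p)$; since $k_3$ does not affect singular numbers, the problem reduces to determining the distribution of $\SN(\diag(p^\l) k \diag(p^\mu))$ for Haar $k$. This is exactly the convolution of characteristic functions of two $\GL_n(\Z_p)$-double cosets in $\GL_n(\Q_p)$, whose decomposition into double cosets is given by the Hall numbers $g_{\l\mu}^\nu(p)$ by Macdonald's theorem (Chapter V). Converting Hall numbers to $c_{\l,\mu}^\nu(0,t)$ via the standard normalization and rewriting coset cardinalities $|K a_\nu K / K|$ in terms of $P_\nu(1,t,\ldots,t^{n-1};t)$ produces the stated formula. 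The main obstacle is bookkeeping: the translation between Hall numbers, Macdonald's $c$-coefficients, the principal specialization of $P_\nu$, and automorphism counts involves several partition statistics ($n(\l)$, $|\l|$, conjugate $\l'$) that must be tracked carefully so that all powers of $p$ combine into the clean symmetric form $c_{\l,\mu}^\nu(0,t) P_\nu / (P_\l P_\mu)$; this is essentially the only non-routine bookkeeping in the argument.
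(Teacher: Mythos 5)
Part 3 of your proposal follows essentially the same route as the paper: both reduce the product question to Macdonald's classical description of the Hecke ring of $(\GL_n(\Q_p),\GL_n(\Z_p))$ in \cite[Ch.~V]{mac}, so there is nothing to say there beyond agreement.

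Parts 1 and 2, however, are where you diverge sharply from the paper, and this is where the substance lies. The paper does \emph{not} prove Parts 1 and 2 by direct module-counting. Instead it deduces them from Part 3 by realizing corners as products with near-projection matrices $\diag(1[N-k],p^D[k])$ and sending $D\to\infty$; this requires a delicate analysis of the asymptotics of the Littlewood--Richardson coefficients $c_{\l,\mu}^\nu(0,t)$, which occupies \Cref{prop:measure_convergence_jacobi_and_cauchy}, \Cref{prop:measure_convergence_branching_corners}, and ultimately \Cref{thm:explicit_low_deg_terms} on asymptotic factorization of Macdonald polynomials. Your proposal would replace all of that with a direct $\Z/p^M\Z$-counting argument. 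If it could be pushed through, it would be genuinely more elementary and would decouple Parts 1 and 2 from Part 3 entirely. This is a legitimate alternative strategy and is in the spirit of the Fulman and Wood references the paper cites.

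That said, there are concrete gaps in the sketch that I do not think are routine to close. First, in Part 1 the Haar corner distribution on $M_{n\times m}(\Z/p^M\Z)$ is not uniform on the set of matrices with a given Smith form: the weight of a matrix $A$ is proportional to the number of completions of $A$ to an element of $\GL_N(\Z/p^M\Z)$, which depends on $\SN(A)$ nontrivially. Your phrase ``the probability becomes an automorphism ratio'' accounts for the orbit size (stabilizer count) but does not visibly track the completion count, and both must be matched against the explicit principal specializations in \Cref{prop:hl_principal_formulas}; as written this step is incomplete. Second, and more seriously, in Part 2 the assertion that column truncation produces ``the classical Pieri-type count \dots\ giving the skew factor $Q_{\nu/\l}$'' is not a citation to an existing result, and it cannot be a literal Pieri count: the specialization appearing is $Q_{\nu/\l}(1,t^{-1},\ldots,t^{-(k-1)};t)$ at a geometric progression with ratio $t^{-1}=p>1$, which is not the nonnegative specialization that admits a cokernel/flag-counting interpretation via Hall numbers, and making sense of the claimed correspondence would itself be a nontrivial lemma. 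Third, the statement is for general $\l\in\Sig_n$ (with possibly negative parts), so the cokernel $\coker(A)$ is not a finite $p$-group; the identification with $G_\l\oplus\Z_p^{m-n}$ and the quotient-module language you use throughout presuppose nonnegativity, and a shifting argument (as in \Cref{lem:basic_signature_func_properties}) would need to be explicitly interleaved with the counting. None of these is obviously fatal, but each is a missing ingredient rather than bookkeeping, so the proposal as written does not constitute a proof of Parts 1 and 2.
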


In the limit $N \to \infty$, \Cref{thm:exact_hl_results_in_p-adic_rmt} Part 1 recovers the distribution of singular numbers of matrices with iid additive Haar entries, the analogue of the Ginibre ensemble.

\begin{restatable}{cor}{pwishart}\label{cor:reprove_wood}
Fix a prime $p$ and let $t=1/p$. Let $1 \leq n \leq m$, and $A \in M_{n \times m}(\Z_p)$ be random with iid entries distributed according to the additive Haar measure on $\Z_p$. Then for any $\l \in \Sig_n^+$,
\begin{equation*}
    \Pr(\SN(A) = \l) = \frac{P_\l(1,\ldots,t^{n-1};t)Q_\l(t^{m-n+1},t^{m-n+2},\ldots;t)}{\Pi_{(0,t)}(1,\ldots,t^{n-1};t^{m-n+1},t^{m-n+2},\ldots)}
\end{equation*}
\end{restatable}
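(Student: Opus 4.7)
The plan is to obtain this corollary as the $N \to \infty$ limit of Part 1 of \Cref{thm:exact_hl_results_in_p-adic_rmt}. The key observation is that the additive Haar measure on $M_N(\Z_p)$ is bi-invariant under multiplication by $\GL_N(\Z_p)$, since such multiplication preserves the uniform product measure on entries (its determinant has $p$-adic absolute value $1$). Hence the Haar measure on $\GL_N(\Z_p)$ is the normalized restriction of iid Haar on $M_N(\Z_p)$ to $\GL_N(\Z_p)$, with normalizing constant $\prod_{k=1}^N (1 - p^{-k})$.

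First I would establish the distributional convergence $A^{(N)} \to A$, where $A^{(N)}$ denotes the top-left $n \times m$ corner of a Haar element of $\GL_N(\Z_p)$ and $A$ has iid Haar entries as in the corollary. For any measurable $U \subset M_{n \times m}(\Z_p)$,
\begin{equation*}
\Pr(A^{(N)} \in U) = \frac{\int_U \Pr\bigl(g \in \GL_N(\Z_p) \,\big|\, \text{top-left } n \times m \text{ corner of } g = B\bigr)\, d\mu(B)}{\prod_{k=1}^N(1 - p^{-k})},
\end{equation*}
where $\mu$ is iid additive Haar. Invertibility of $g$ depends only on its reduction mod $p$, and the conditional probability in the numerator factors as (probability that the first $n$ rows attain full row rank in $\F_p^N$, given that their first $m$ entries equal $B$ mod $p$) times $\prod_{k=1}^{N-n}(1-p^{-k})$ (probability the remaining iid uniform rows extend to a basis). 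Both factors tend to their limits uniformly in $B$ as $N \to \infty$: the first to $1$, since the $N-m$ additional iid columns of the top rows overwhelmingly achieve full rank, and the second to $\prod_{k=1}^\infty(1-p^{-k})$. So the displayed ratio converges to $\mu(U)$, giving total variation convergence. Because $\SN$ is locally constant on the full-measure nonsingular locus, $\Pr(\SN(A^{(N)}) = \l) \to \Pr(\SN(A) = \l)$ for each $\l \in \Sig_n^+$.

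Second, I would take $N \to \infty$ in formula \eqref{eq:p-adic_jacobi}. The factor $P_\l(1,t,\ldots,t^{n-1};t)$ is $N$-independent. The specialization $Q_\l(t^{m-n+1},\ldots,t^{N-n};t)$ converges to $Q_\l(t^{m-n+1},t^{m-n+2},\ldots;t)$: because $|t|=1/p<1$, the added variables $t^{N-n}$ are summable, and $Q_\l$ depends continuously on the alphabet under such principal specializations. Similarly, $\Pi_{(0,t)}(1,\ldots,t^{n-1};t^{m-n+1},\ldots,t^{N-n})$, which is a finite product of Cauchy-kernel factors of the form $\frac{1-t^{i+j+1}}{1-t^{i+j}}$, converges to its strictly positive infinite-variable counterpart. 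Combining these limits with the first step yields the claimed formula.

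The main technical point is the uniform-in-$B$ control in Step 1, which amounts to the standard combinatorial fact that the probability of a matrix being invertible in $\GL_N(\F_p)$ depends only weakly on any fixed-size corner when $N$ is large. An alternative route avoiding this analytic step is to verify that the pointwise $N \to \infty$ limit of the right side of \eqref{eq:p-adic_jacobi} already defines a probability distribution on $\Sig_n^+$ (via the Hall-Littlewood Cauchy identity) and then identify it with the law of $\SN(A)$ by relating $A$ to appropriate sub-matrix truncations through Part 2 of \Cref{thm:exact_hl_results_in_p-adic_rmt}; but the direct coupling argument is the most transparent path.
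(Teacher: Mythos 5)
Your proposal is correct and follows the same overall plan as the paper: identify the formula as the $N \to \infty$ limit of Part~1 of \Cref{thm:exact_hl_results_in_p-adic_rmt}, then establish that $\Pr(\SN(B_N) = \l) \to \Pr(\SN(A) = \l)$ where $B_N$ is the $n \times m$ Haar corner. Where you diverge is in how you prove this distributional convergence. The paper first restricts to the nonsingular locus, reduces modulo $p^D$ for $D > \l_1$ (using the commutative diagram for $\SN$ under $r_D$), then further reduces to a statement about uniform measures on matrices over $\F_p$: it shows $\Pr(r_1(B_N) = C) \to p^{-nm}$ for every $C \in M_{n\times m}(\F_p)$ by sandwiching that probability between two explicit ratios of point counts of full-rank $n \times N$ matrices with prescribed corner. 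You instead observe that the Haar probability measure on $\GL_N(\Z_p)$ is the normalized restriction of iid additive Haar on $M_N(\Z_p)$, so the law of the corner $A^{(N)}$ has an explicit Radon--Nikodym density $\tfrac{\Pr(g \in \GL_N(\Z_p)\mid \text{corner}=B)}{\prod_{k=1}^N(1-p^{-k})}$ with respect to iid Haar on $M_{n\times m}(\Z_p)$, and you show this density tends to $1$ uniformly in $B$ by factoring invertibility mod $p$ through the row-rank of the top $n$ rows. That gives total variation convergence, after which you do not actually need the local constancy of $\SN$ (which you invoke but which is superfluous once TV convergence is in hand). Both treatments are correct and of comparable difficulty; your density argument is arguably a cleaner packaging of the same combinatorial fact about uniform matrices over $\F_p$, and it yields the slightly stronger TV statement, while the paper's count-and-sandwich route is more explicit and avoids discussing Radon--Nikodym derivatives. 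The second step (taking $N \to \infty$ inside \eqref{eq:p-adic_jacobi}) is identical in both proofs.
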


\begin{rmk}\label{rmk:haar_gaussian}
As touched on above, in a probabilistic context it is often helpful to view the additive Haar measure on $\Z_p$ as an analogue of the Gaussian on $\R$ or $\C$. One shared feature is that additive convolution preserves both classes of measures: if $X,Y$ are distributed according to the additive Haar measure on $\Z_p$, $X+Y$ is as well. In fact, random vectors $v \in \Z_p^n$ with iid Haar-distributed entries are invariant under $\GL_n(\Z_p)$ just as Gaussian vectors are invariant under $U(n)$, and both are characterized up to scaling by this invariance together with independence of entries. Another shared feature is that both the Gaussian and the additive Haar measure are their own Fourier transform. See Tao \cite{taopgaussian} and Evans \cite{evans2001local} for more discussion.
\end{rmk}

Explicit formulas for the probabilities in \Cref{cor:reprove_wood} and Part 1 of \Cref{thm:exact_hl_results_in_p-adic_rmt} may be obtained using the explicit formulas for Hall-Littlewood polynomials, see \Cref{prop:hl_principal_formulas}. As mentioned before, the $m=n$ case appeared in \cite{friedman-washington}, and also in the work of Evans \cite{evans2002elementary}. The rectangular case was considered in work of Wood \cite[Thm. 1.3]{wood2015random}, which studies the $n \to \infty$ asymptotics of $n \times (n+u)$ matrices for fixed $u$; this work shows that the limit is universal for many choices of the distribution of matrix entries, but does not consider the exact result of \Cref{cor:reprove_wood} for finite $n$. 

Subcases of the Hall-Littlewood measures we consider also appear in the work of Fulman \cite{fulman_main,fulman-CL} on Jordan blocks of uniformly random elements of $\GL_n(\F_q)$, though the language of Hall-Littlewood measures was not used. A different family of Hall-Littlewood measures\footnote{These have one Plancherel specialization, while ours have both specializations geometric progressions in $t$.} appears in work of Borodin \cite{borodin1999lln} and Bufetov-Petrov \cite{bufetov2015lln} on the related problem of Jordan blocks of random upper-triangular matrices over $\F_q$.

\Cref{thm:exact_hl_results_in_p-adic_rmt} yields that the matrix product Markov chains on $\Sig_n$ in \Cref{thm:lln_and_func_clt_rmt_version} are Hall-Littlewood processes, see \Cref{sec:2} for definitions. The proof of \Cref{thm:lln_and_func_clt_rmt_version} contains one other key ingredient in addition to \Cref{thm:exact_hl_results_in_p-adic_rmt}, which is a particle system interpretation and exact sampling algorithm for these Hall-Littlewood processes. In \Cref{sec:4} we show that, viewing these elements of $\Sig_n$ as configurations of $n$ particles on $\Z$, the evolution of these particles under our Markov chain is described by a simple set of rules similar to \emph{pushing totally asymmetric simple exclusion process (PushTASEP)}. This description in particular yields an exact sampling algorithm for the relevant Hall-Littlewood processes, see \Cref{prop:sampling_alg}. This algorithm is equivalent to one given by Fulman \cite[Thm. 11]{fulman1999probabilistic}, though the latter makes no reference to the particle system interpretation. Our explicit description of this particle system for singular numbers may also be viewed as a kind of discrete, $p$-adic analogue of multiplicative Dyson Brownian motion on the space of complex matrices--we refer to Jones-O'Connell \cite{jones2006weyl} for an exposition of the latter. In the regime considered, these particles spread apart on $\Z$ and interactions between them do not contribute in the limit. After justifying that this is the case, the law of large numbers is an explicit computation and the convergence of fluctuations to independent Brownian motions follows from Donsker's theorem applied separately to each particle, yielding \Cref{thm:lln_and_func_clt_rmt_version}.

\begin{rmk}
Bufetov-Petrov \cite{bufetov2015lln} use a similar strategy to prove a law of large numbers for Jordan blocks of random upper-triangular matrices over a finite field. In this setting, analogous results to \Cref{thm:exact_hl_results_in_p-adic_rmt} relate the sizes of these Jordan blocks to certain Hall-Littlewood processes. \cite{bufetov2015lln} then gives a sampling algorithm for these Hall-Littlewood processes via an RSK algorithm, and analyzes it in a similar manner to obtain the law of large numbers. It is worth noting that Bufetov-Matveev \cite{bufetov2018hall} give a more general Hall-Littlewood RSK algorithm which could be used to sample from our Hall-Littlewood process, but this algorithm would be more detailed and hence more cumbersome to analyze.
\end{rmk}

\begin{rmk}\label{rmk:any_local_field}
Though we have chosen to state them in the case of $\Q_p$ because it is most commonly considered in the literature, all of the results and proofs in this paper are actually valid for matrices over any non-Archimedean local field $K$ with finite residue field, i.e. any algebraic extensions of $\Q_p$ or $\F_q((t))$. Any such field has a ring of integers $R$ which plays the role of $\Z_p$, and a unique maximal ideal $(\omega) \subset R$ generated by a uniformizer $\omega$ which plays the role of $p$. The residue field $R/(\omega)$ is a finite field $\F_q$ for some $q$. Replacing $\Q_p$ by $K$, $\Z_p$ by $R$, $p$ by $\omega$ (in the context of matrix entries), and setting $t=1/q$ in Hall-Littlewood specializations, our results translate mutatis mutandis. This is essentially a consequence of the fact that Part 3 of \Cref{thm:exact_hl_results_in_p-adic_rmt} holds in this generality, see \cite[Ch. V]{mac} and the discussion in \Cref{sec:3}. 
\end{rmk}

\subsection{Macdonald polynomials and connections to the complex case.}

Let us say a few words about the proof of \Cref{thm:exact_hl_results_in_p-adic_rmt}. Part 3 is essentially a probabilistic reframing of results on the Hecke ring of the pair $(\GL_n(\Q_p), \GL_n(\Z_p))$ in \cite[Ch. V]{mac}. To prove Part 1 and Part 2, we use limiting cases of Part 3 corresponding to projection matrices. For example, if $U \in \GL_N(\Z_p)$ is random with Haar distribution, then the corners described in Part 1 are given by the convolution $P_n U P_m$ of projection matrices of rank $n$ and $m$, which may be treated by a limiting case of the product operation in Part 3. This link also explains the appearance of similar geometric progressions in $t$ in the formulas in Parts 1, 2, 3: those in Parts 1, 2 come from those in Part 3 via this degeneration. We remark that in the complex case, the relation between products of randomly-rotated projection matrices and truncated unitary/Jacobi ensembles was observed and exploited by Collins \cite{collins2005product}.

To implement this strategy in the $p$-adic setting, in view of \eqref{eq:p-adic_product_conv} it is necessary to establish some combinatorial results on asymptotics of the structure coefficients $c_{\l,\mu}^\nu(0,t)$. We prove some quite general results in this direction in \Cref{sec:3}, which are valid for the more general class of \emph{Macdonald polynomials} $P_\l(x_1,\ldots,x_n;q,t)$, another family of symmetric polynomials indexed by signatures. These polynomials have two parameters $q,t$ and specialize to Hall-Littlewood polynomials when $q=0$, but the measures, Markov dynamics and convolution operations on signatures defined for Hall-Littlewood polynomials work exactly the same way. The aforementioned results on their structure coefficients, given later in \Cref{prop:measure_convergence_branching_corners} and \Cref{prop:measure_convergence_jacobi_and_cauchy}, are consequences of the following asymptotic factorization property of Macdonald polynomials, which we believe may be of independent interest for other asymptotic problems.

\begin{restatable}{thm}{explicitlow}\label{thm:explicit_low_deg_terms}
Let $q,t \in (-1,1)$. Fix positive integers $k, N$, let $r_1,\ldots,r_k$ be positive integers such that $\sum_i r_i = N$, and set $s_i = \sum_{j=1}^i r_j$ with the convention $s_0=0$. Let $L_1 > \cdots > L_k$ be integers and $\l^{(i)} \in \Sig_{r_i}$ be any signatures, and define the signature $\l(D) = (L_1D + \l^{(1)}_1,\ldots,L_1D+\l^{(1)}_{r_1},\ldots, L_kD + \l^{(k)}_1,\ldots,L_kD + \l^{(k)}_{r_k}) \in \Sig_N$ for each $D \in \N$ large enough so that this is a valid signature. Then

\begin{multline*}
    \frac{P_{\l(D)}(x_1,\ldots,x_N;q,t)}{\prod_{i=1}^{k} (x_{s_{i-1}+1} \cdots x_{s_{i}})^{L_i D}} \to 
     \prod_{i=1}^{k} 
    P_{\l^{(i)}}(x_{s_{i-1}+1},\ldots,x_{s_i};q,t)  \prod_{i=1}^{k-1} \Pi_{(q,t)}(x_{s_{i-1}+1}^{-1},\ldots,x_{s_i}^{-1}; x_{s_i+1},\ldots,x_{N}) 
\end{multline*}
as $D \to \infty$ in the sense that the coefficient of each Laurent monomial $x_1^{d_1} \cdots x_N^{d_N}$ on the LHS converges to the corresponding coefficient on the RHS. 
\end{restatable}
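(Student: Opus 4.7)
The plan is to prove the asymptotic factorization by induction on $k$, using the branching rule for Macdonald polynomials and an asymptotic analysis of skew Macdonald polynomials. The base case $k=1$ is immediate: $\lambda(D) = L_1 D \cdot \mathbf{1}_N + \lambda^{(1)}$, and the homogeneity identity $P_{\lambda + c\mathbf{1}_N}(x; q, t) = (x_1 \cdots x_N)^c P_\lambda(x; q, t)$ gives $P_{\lambda(D)}(x)/(x_1 \cdots x_N)^{L_1 D} = P_{\lambda^{(1)}}(x)$, matching the RHS (which has an empty Cauchy product).

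For the inductive step, I apply the branching formula to separate the first $r_1$ variables,
\begin{equation*}
P_{\lambda(D)}(x_1,\ldots,x_N) = \sum_\mu P_\mu(x_1,\ldots,x_{r_1})\, P_{\lambda(D)/\mu}(x_{r_1+1},\ldots,x_N),
\end{equation*}
and parametrize $\mu = L_1 D \cdot \mathbf{1}_{r_1} + \nu$ for $\nu \in \Sig_{r_1}$ with $\nu \leq \lambda^{(1)}$ and $\nu_{r_1} \geq -L_1 D$, using homogeneity to write $P_\mu = (x_1 \cdots x_{r_1})^{L_1 D} P_\nu$. The crucial geometric observation is that for $D$ large enough relative to the bounded data $\nu, \lambda^{(i)}$, the skew shape $\lambda(D)/\mu$ decomposes into an ``appendage'' of shape $\lambda^{(1)}/\nu$ situated in the first $r_1$ rows at columns $[L_1 D + \nu_j + 1,\, L_1 D + \lambda^{(1)}_j]$, and a ``tower'' of shape $\tilde\lambda(D) := (L_2 D + \lambda^{(2)}, \ldots, L_k D + \lambda^{(k)})$ in the remaining rows at columns $[1,\, L_2 D + \lambda^{(2)}_1]$—two pieces in disjoint rows and (for large $D$) disjoint columns. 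Applying the combinatorial formula $P_{\sigma/\tau}(y; q, t) = \sum_T \psi_T(q, t) y^T$ over column-strict tableaux, the cross-piece contributions to $\psi_T(q,t)$ involve factors of the form $q^{(\text{block gap}) \cdot D + O(1)}$; since $|q|, |t| < 1$, these cross factors converge coefficient-wise to $1$, yielding the asymptotic factorization $P_{\lambda(D)/\mu}(x_{r_1+1},\ldots,x_N) \to P_{\lambda^{(1)}/\nu}(x_{r_1+1},\ldots,x_N)\, P_{\tilde\lambda(D)}(x_{r_1+1},\ldots,x_N)$ as $D \to \infty$.

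Substituting and letting $D \to \infty$ (the truncation $\nu_{r_1} \geq -L_1 D$ is harmless since for any fixed target Laurent monomial only finitely many $\nu$ contribute), one obtains
\begin{equation*}
\lim_{D \to \infty} \frac{P_{\lambda(D)}(x)}{(x_1\cdots x_{r_1})^{L_1 D} P_{\tilde\lambda(D)}(x_{r_1+1},\ldots,x_N)} = \sum_{\nu \in \Sig_{r_1},\, \nu \leq \lambda^{(1)}} P_\nu(x_1,\ldots,x_{r_1})\, P_{\lambda^{(1)}/\nu}(x_{r_1+1},\ldots,x_N).
\end{equation*}
The sum on the right equals $P_{\lambda^{(1)}}(x_1,\ldots,x_{r_1}; q, t)\cdot \Pi_{(q,t)}(x_1^{-1},\ldots,x_{r_1}^{-1};\, x_{r_1+1},\ldots,x_N)$---a signature-valued extension of the standard branching identity (which sums only over partitions $\mu$), with the additional negative-$\nu$ terms aggregating into the Cauchy kernel; this can be verified via the Macdonald Cauchy identity $\sum_\sigma P_\sigma(x^{-1}) Q_\sigma(y) = \Pi(x^{-1}; y)$ combined with the inversion/homogeneity rules for $P_\sigma$, or by direct coefficient comparison. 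Finally, applying the inductive hypothesis to $P_{\tilde\lambda(D)}$ in the remaining $N - r_1$ variables with $k-1$ blocks produces the remaining Macdonald polynomials and Cauchy kernels, giving the claimed limit. The principal technical obstacles are (a) the skew factorization, which requires careful tracking of the cross-piece factors in $\psi_T(q,t)$ to establish coefficient-wise convergence to a product, and (b) the signature-indexed branching identity, where one must show that extending the standard branching from partitions to signatures introduces exactly the Cauchy kernel $\Pi(x^{-1}; y)$.
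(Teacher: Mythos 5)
Your induction is essentially a recursive rewriting of the paper's direct Gelfand--Tsetlin decomposition: splitting off the first $r_1$ variables extracts the triangular constituent in columns $1,\ldots,r_1$ below row $r_1$, the ``appendage'' is the rectangular constituent in those columns above row $r_1$, and recursing on the tower handles the remaining blocks. The base case, the claim that only finitely many $\nu$ contribute to each fixed Laurent monomial, and the heuristic that cross-block $f$-factors converge to $1$ coefficient-wise (since their arguments contain $q$ raised to a power that grows linearly in $D$) are all correct.

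The genuine gap is the identification of the appendage factor. In the branching coefficient $\psi(T)$ of the full GT pattern, the surviving sub-product of $f$-factors attached to a rectangular constituent assembles into a $\varphi$-type weight, not a $\psi$-type weight; hence the appendage contributes the skew $Q$ function $Q_{-\nu/-\l^{(1)}}(x_{r_1+1},\ldots,x_N)$, not $P_{\l^{(1)}/\nu}$. These differ by the normalization $\la P_\l,P_\l\ra^{-1}$, so this is substantive, and verifying that the rectangular $\psi$-factors reproduce $\varphi$ requires a direct comparison of the two product formulas that is not automatic. As a consequence your ``signature-indexed branching identity'' is false as stated: at $\l^{(1)}=(0[r_1])$ your claim reduces to $\sum_{\nu\leq 0} P_\nu(x)P_{-\nu}(y)=\Pi_{(q,t)}(x^{-1};y)$, which fails outside the Schur case $q=t$. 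The identity you actually need is
\begin{equation*}
\sum_{\nu\in\Sig_{r_1}} P_\nu(x_1,\ldots,x_{r_1})\,Q_{-\nu/-\l^{(1)}}(x_{r_1+1},\ldots,x_N) = \Pi_{(q,t)}(x_1^{-1},\ldots,x_{r_1}^{-1};x_{r_1+1},\ldots,x_N)\,P_{\l^{(1)}}(x_1,\ldots,x_{r_1}),
\end{equation*}
which follows from the skew Cauchy identity upon inverting the $x$-variables and using $P_\sigma(x^{-1})=P_{-\sigma}(x)$. With the appendage correctly identified as a skew $Q$ function and this form of the Cauchy identity substituted, your inductive scheme does reproduce the theorem.
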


We chose to work at the level of Macdonald polynomials partially to highlight the similarities between our results and existing results for complex random matrices. One may define measures, Markov kernels and randomized convolution operations on integer signatures by the formulas \eqref{eq:p-adic_jacobi}, \eqref{eq:p-adic_corners}, \eqref{eq:p-adic_cauchy_corners} and \eqref{eq:p-adic_product_conv} in \Cref{thm:exact_hl_results_in_p-adic_rmt} but with Macdonald polynomials $P_\l(x_1,\ldots,x_n;q,t)$ substituted in for Hall-Littlewood polynomials $P_\l(x_1,\ldots,x_n;0,t)$, and all probabilities will still be nonnegative provided $q \in [0,1)$. Sending $q \to 0$ recovers the probabilities in \Cref{thm:exact_hl_results_in_p-adic_rmt}. There is another limit where $t = q^{\beta/2}$ and $q,t \to 1$ while the signatures are also scaled at some rate dependent on $q$, which yields probability measures, Markov kernels and convolutions on \emph{real} signatures. In these limits, the formulas in \Cref{thm:exact_hl_results_in_p-adic_rmt} (with Macdonald polynomials in place of Hall-Littlewood, but no other changes) degenerate when $\beta = 1,2,4$ to formulas for singular values under the corresponding corners and product matrix operations on real, complex and quaternion random matrices. We discuss this limit in more detail in \Cref{sec:appendixA}. 

We note that in a quite different direction, nontrivial $p$-adic analogues of results in complex random matrix theory have been obtained by Neretin \cite{neretin2013hua} and Assiotis \cite{assiotis2020infinite}. It would be interesting to see to what extent their results and the analogous results in complex random matrix theory follow the above pattern of separate degenerations of the same Macdonald-level objects.

\subsection{Plan of paper.} 

In \Cref{sec:2} we give the necessary background on symmetric functions, Macdonald and Hall-Littlewood polynomials, and probabilistic constructions from them. In \Cref{sec:3} we give background on $p$-adic random matrices and prove \Cref{thm:exact_hl_results_in_p-adic_rmt} conditional on some results on Macdonald polynomials, and in \Cref{sec:3.2} we prove \Cref{thm:explicit_low_deg_terms} and use it to deduce these. In \Cref{sec:4} we relate the Hall-Littlewood processes which appear to the particle system mentioned above, and use this to prove the asymptotic statements \Cref{thm:lln_and_func_clt_rmt_version} and \Cref{thm:lyapunov}. In \Cref{sec:appendixA} we give a more in-depth explanation of the relations to the real/complex/quaternion cases alluded to throughout and a sketch of how the strategy used to obtain \Cref{thm:exact_hl_results_in_p-adic_rmt} could be adapted to these cases, together with an alternate proof of one of our results (\Cref{prop:measure_convergence_jacobi_and_cauchy}) which would be helpful for this approach.

\section{Symmetric functions preliminaries}\label{sec:2}

In this section we give a quick review of general symmetric functions, Macdonald polynomials, and measures and Markov dynamics on integer signatures arising from these. For a more detailed introduction to symmetric functions see \cite{mac}, and for Macdonald measures see \cite{borodin2014macdonald}. 

\subsection{Macdonald polynomials.}

We have chosen to express everything in terms of integer signatures, which have finite length and may have negative parts, rather than the more common integer partitions. This is because we consider matrices of fixed size, so there is a natural length already fixed for the tuple of singular numbers, and because it is desirable to consider matrices over $\Q_p$ rather than just $\Z_p$ and this requires possibly-negative singular numbers. We begin with a block of notation for signatures.

\begin{defi}
$\Sig_n$ denotes the set of integer signatures of length $n$, which are weakly decreasing $n$-tuples of integers. Given $\l = (\l_1,\ldots,\l_n) \in \Sig_n$, we refer to the integers $\l_i$ as the \emph{parts} of $\l$. When the length of $\l$ is not clear from context we will denote it by $\len(\l)$. $\Sig_n^+$ is the set of signatures with all parts nonnegative. We set $|\l| := \sum_{i=1}^n \l_i$ and $m_k(\l) = |\{i: \l_i = k\}|$. For $\l \in \Sig_n$ and $\mu \in \Sig_{n-1}$, write $\mu \prec_P \l$ if $\l_i \geq \mu_i$ and $\mu_i \geq \l_{i+1}$ for $1 \leq i \leq n-1$. For $\nu \in \Sig_n$ write $\nu \prec_Q \l$ if $\l_i \geq \nu_i$ for $1 \leq i \leq n$ and $\nu_i \geq \l_{i+1}$ for $1 \leq i \leq n-1$. We write $c[k]$ for the signature $(c,\ldots,c)$ of length $k$, and often abuse notation by writing $(\l,\mu)$ to refer to the tuple $(\l_1,\ldots,\l_n,\mu_1,\ldots,\mu_m)$ when $\l \in \Sig_n, \mu \in \Sig_m$. We additionally write $-\l := (-\l_n,\ldots,-\l_1) \in \Sig_n$ for any $\l \in \Sig_n$. Finally, we denote the empty signature by $()$.
\end{defi}

Recall that we denote by $\L_n$ the ring $\R[x_1,\ldots,x_n]^{S_n}$ of symmetric polynomials in $n$ variables $x_1,\ldots,x_n$. It is a very classical fact that the power sum symmetric polynomials $p_k = \sum_{i=1}^n x_i^k, k =1,\ldots,n$, are algebraically independent and algebraically generate $\L_n$. An immediate consequence is that $\L_n$ has a natural basis given by the polynomials
\begin{equation*}
    p_\l := \prod_{i \geq 1} p_{\l_i}
\end{equation*}
for $n \geq \l_1 \geq \l_2 \geq \ldots$ a weakly decreasing sequence of nonnegative integers which is eventually $0$ (i.e. an integer partition). Hence given generic real parameters $q,t$, one may define an inner product on $\L_n$ by setting
\begin{equation*}
    \la p_\l, p_\mu \ra_{(q,t)} = \delta_{\l \mu} \prod_{i \geq 1: \l_i > 0} \frac{1-q^{\l_i}}{1-t^{\l_i}} \prod_{j \geq 1} j^{m_j(\l)} \cdot m_j(\l)!
\end{equation*}

The \emph{Macdonald symmetric polynomials} $\{P_\l(x_1,\ldots,x_n;q,t)\}_{\l \in \Sig_n^+}$ are a distinguished basis for $\L_n$ characterized by the properties
\begin{itemize}
    \item They are orthogonal with respect to the inner product $\la \cdot,\cdot \ra_{(q,t)}$.
    \item They may be written as 
    \begin{equation*}
        P_\l(x_1,\ldots,x_n;q,t) = x_1^{\l_1}x_2^{\l_2}\cdots x_n^{\l_n} + \text{(lower-order monomials in the lexicographic order)}.
    \end{equation*}
\end{itemize}
It is not \emph{a priori} clear that such polynomials exist, see \cite{macdonald1998orthogonal} for a proof of this, but it is clear that they form a basis for $\L_n$. 
The dual basis $Q_\l(\bx;q,t)$ is defined by 
\begin{equation*}
    Q_\l(x_1,\ldots,x_n;q,t) = \frac{P_\l(x_1,\ldots,x_n;q,t)}{\la P_\l,P_\l \ra_{(q,t)}}.
\end{equation*}

Because the $P_\l$ form a basis for the vector space of symmetric polynomials in $n$ variables, there exist symmetric polynomials $P_{\l/\mu}(x_1,\ldots,x_{n-k};q,t) \in \L_{n-k}$ indexed by $\l \in \Sig_n^+, \mu \in \Sig_k^+$ which are defined by
\begin{equation*}\label{eq:def_skewP}
    P_\l(x_1,\ldots,x_n;q,t) = \sum_{\mu \in \Sig_k^+} P_{\l/\mu}(x_{k+1},\ldots,x_n;q,t) P_\mu(x_1,\ldots,x_k;q,t).
\end{equation*}
We define the skew $Q$ polynomials in a slightly nonstandard way where the lengths of both signatures are the same, in contrast to the skew $P$ polynomials. This differs from the classical treatment \cite{mac}, and is inspired by the higher spin Hall-Littlewood polynomials introduced in \cite{borodin2017family}. For $\l,\nu \in \Sig_n^+$ and $k \geq 1$ arbitrary, define $Q_{\l/\nu}(x_1,\ldots,x_k;q,t)$ by
\begin{equation*}
    Q_{(\l,0[k])}(x_1,\ldots,x_{n+k};q,t) = \sum_{\nu \in \Sig_n^+} Q_{\l/\nu}(x_{n+1},\ldots,x_{n+k};q,t) Q_\nu(x_1,\ldots,x_k;q,t).
\end{equation*}
In particular, 
\begin{equation}\label{eq:Qs_agree}
    Q_{\l/(0[n])}(x_1,\ldots,x_{n+k};q,t) = Q_{(\l,0[k])}(x_1,\ldots,x_{n+k};q,t)
\end{equation}
and we will use both interchangeably. The following lemma may be easily derived from the corresponding statement for symmetric functions in infinitely many variables {\cite[VI.6 Ex. 2(a)]{mac}}. Recall the notation $(a;q)_n := (1-a)(1-aq) \cdots (1-aq^{n-1})$ for $n \geq 0$, with $(a;q)_0 =1$ and $(a;q)_\infty$ defined in the obvious way.
\begin{lemma} \label{lem:branching_formulas}
For $\l \in \Sig_n^+, \mu \in \Sig_{n-1}^+$ with $\mu \prec_P \l$, let
\begin{equation}\label{eq:pbranch}
    \psi_{\l/\mu} :=  \prod_{1 \leq i \leq j \leq n-1} \frac{f(t^{j-i}q^{\mu_i-\mu_j})f(t^{j-i}q^{\l_i-\l_{j+1}})}{f(t^{j-i}q^{\l_i-\mu_j})f(t^{j-i}q^{\mu_i-\l_{j+1}})}
\end{equation}
where $f(u) := (tu;q)_\infty/(qu;q)_\infty$. For $\nu \in \Sig_n^+$ with $\nu \prec_Q \l$, let
\begin{equation}\label{eq:qbranch}
    \varphi_{\l/\nu} := \prod_{1 \leq i \leq j \leq N} \frac{f(t^{j-i}q^{\l_i-\l_j})}{f(t^{j-i}q^{\l_i-\nu_j})} \prod_{1 \leq i \leq j \leq N-1} \frac{f(t^{j-i}q^{\nu_i-\nu_{j+1}})}{f(t^{j-i}q^{\nu_i-\l_{j+1}})}.
\end{equation}

Then for $\l,\nu \in \Sig_{n}^+, \mu \in \Sig_{n-k}^+$, we have
\begin{equation}\label{eq:skewP_branch_formula}
    P_{\l/\mu}(x_1,\ldots,x_k;q,t) = \sum_{\mu = \l^{(1)} \prec_P \l^{(2)} \prec_P \cdots \prec_P \l^{(k)}= \l} \prod_{i=1}^{k-1} x_i^{|\l^{(i+1)}|-|\l^{(i)}|}\psi_{\l^{(i+1)}/\l^{(i)}}
\end{equation}
and
\begin{equation}\label{eq:skewQ_branch_formula}
    Q_{\l/\nu}(x_1,\ldots,x_k;q,t) = \sum_{\nu = \l^{(1)} \prec_Q \l^{(2)} \prec_Q \cdots \prec_Q \l^{(k)}=\l} \prod_{i=1}^{k-1} x_i^{|\l^{(i+1)}|-|\l^{(i)}|}\varphi_{\l^{(i+1)}/\l^{(i)}}.
\end{equation}
\end{lemma}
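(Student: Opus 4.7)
The plan is to iterate the single-variable branching rules for Macdonald $P$ and $Q$ polynomials. The rules state that
\[
P_\l(x_1,\ldots,x_n;q,t) = \sum_{\mu \prec_P \l} \psi_{\l/\mu}\, x_n^{|\l|-|\mu|}\, P_\mu(x_1,\ldots,x_{n-1};q,t),
\]
and analogously for $Q$ with $\prec_Q$ and $\varphi_{\l/\mu}$. These are found in \cite{mac} Ch.~VI, with the explicit branching coefficients \eqref{eq:pbranch} and \eqref{eq:qbranch} appearing in Ex.~2 of VI.6. Although stated there for partitions, they extend immediately to arbitrary signatures by multiplying through by $(x_1 \cdots x_n)^c$ for $c$ large enough to shift $\l$ to have nonnegative parts, and invoking the homogeneity identity $P_{\l + c[n]}(\mathbf{x}) = (x_1 \cdots x_n)^c P_\l(\mathbf{x})$.

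For the skew $P$ formula \eqref{eq:skewP_branch_formula}, I would proceed by induction on $k$. The base case $k=1$ follows from the one-step rule together with the definition of $P_{\l/\mu}(x_1)$ as the coefficient of $P_\mu(x_1,\ldots,x_{n-1})$ in $P_\l(x_1,\ldots,x_n)$. For the inductive step, apply the one-step rule to split off the last variable $x_k$ of $P_{\l/\mu}(x_1,\ldots,x_k)$ and invoke the inductive hypothesis on the remaining $k-1$ variables. Linear independence of $\{P_\nu\}_\nu$ as a basis of $\L_j$ justifies extracting the coefficient of $P_\mu$ at each step, and the resulting chain decomposition with weight $\prod_i \psi_{\l^{(i+1)}/\l^{(i)}}\, x_i^{|\l^{(i+1)}|-|\l^{(i)}|}$ is exactly \eqref{eq:skewP_branch_formula}.

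The skew $Q$ formula \eqref{eq:skewQ_branch_formula} follows by the same induction, applied to $Q_{(\l,0[k])}(x_1,\ldots,x_{n+k})$ and iterating the $Q$-branching on the variables $x_{n+k},\ldots,x_{n+1}$. The main obstacle, such as it is, lies in unwinding the paper's length-preserving convention for $Q_{\l/\nu}$: one must check that extracting the coefficient of $Q_\nu(x_1,\ldots,x_k)$ from the iterated branching produces sums indexed by chains of length-$n$ signatures with exactly the $\varphi$ weights in \eqref{eq:qbranch}, rather than length-$(n+k)$ signatures with spurious factors arising from the padded zeros. This bookkeeping reduces to the observation that $\varphi_{\l/\nu}$, as a product of $f$-factors in the differences of parts, contributes only trivial factors of the form $f(t^{j-i})/f(t^{j-i}) = 1$ when padded by trailing zeros, so the length-$n$ and length-$(n+k)$ chains contribute identical weights and the two expressions agree.
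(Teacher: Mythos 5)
Your approach — iterate the one-step branching rules from Macdonald VI.6 Ex.~2(a), then for $Q$ track the length-preserving convention via trailing zeros — is exactly the argument the paper has in mind; the paper simply cites the reference without spelling it out. Your key observation that padding $\l,\nu$ with the same trailing zeros contributes only factors $f(t^{j-i})/f(t^{j-i})=1$ to $\varphi_{\l/\nu}$ is correct and is the crux of matching the iterated $\prec_P$ chains (of increasing length) with the paper's fixed-length $\prec_Q$ chains.
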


This suggests using the above formulas to extend the definition of $P$ and $Q$ to arbitrary signatures, possibly with negative parts, which we do now. 

\begin{defi}\label{def:skew_gt_patterns}
For $\mu \in \Sig_n, \l \in \Sig_{n+k}$, we define $\GT_P(\l/\mu)$ to be the set of sequences of interlacing signatures $\mu = \l^{(1)} \prec_P \l^{(2)} \prec_P \cdots \prec_P \l^{(k)}= \l$. We will often write $\GT_P(\l)$ for $\GT_P(\l/())$. 

For $\l,\nu \in \Sig_n$, we define $\GT_{Q,k}(\l/\nu)$ to be the set of sequences of length $n$ interlacing signatures $\nu = \l^{(1)} \prec_Q \l^{(2)} \prec_Q \cdots \prec_Q \l^{(k)}=\l$. We refer to elements of either $\GT_P$ or $\GT_{Q,k}$ as Gelfand-Tsetlin patterns, see \Cref{fig:shifted_GT_scheme}.

For $T \in \GT_P(\l/\mu)$ with $\len(\l) = \len(\mu)+k$, set $\psi(T) := \prod_{i=1}^{k-1} \psi_{\l^{(i+1)}/\l^{(i)}}$. For $T \in \GT_{Q,k}(\l/\nu)$, set $\varphi(T) := \prod_{i=1}^{k-1} \varphi_{\l^{(i+1)}/\l^{(i)}}$. In both cases, let $wt(T) := (|\l^{(1)}|,|\l^{(2)}|-|\l^{(1)}|,\ldots,|\l^{(k)}|-|\l^{(k-1)}|) \in \Z^k$. 
\end{defi}

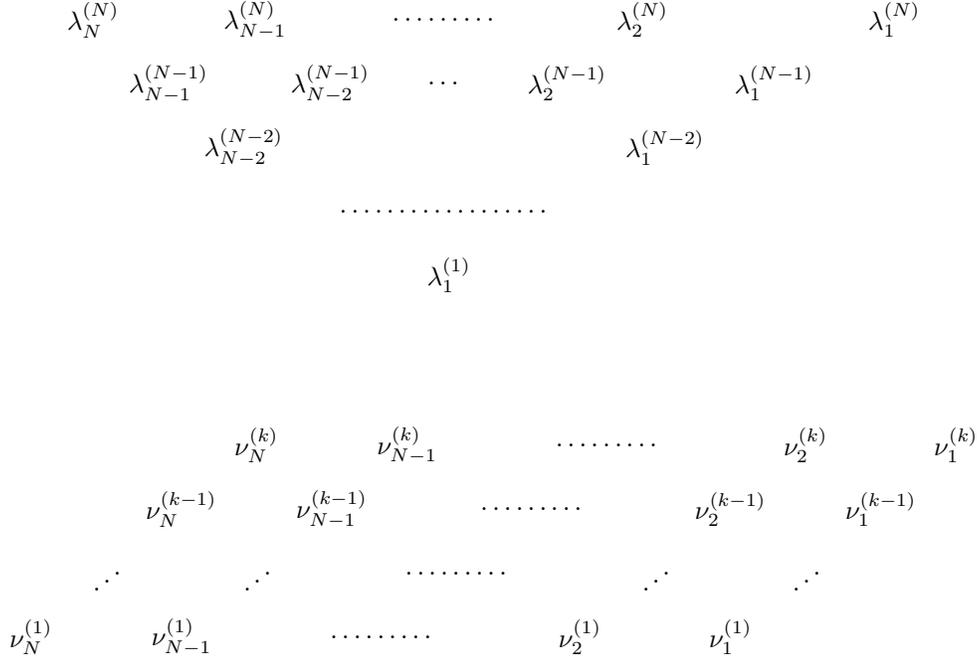
\begin{figure}[htbp]
	\begin{center}
	\begin{tikzpicture}[scale=1]
		\def\h{0.85}
		\def\x{.7}
		\def\y{.15}
		\node at (-5,5) {$\l^{(N)}_N$};
		\node at (-3+\y,5) {$\l_{N-1}^{(N)}$};
		\node at (0-\x/2,5) {$\ldots\ldots\ldots$};
		\node at (3-\x+0*\y,5) {$\l_{2}^{(N)}$};
		\node at (5-\x+9*\y,5) {$\l_{1}^{(N)}$};
		\node at (-4,5-\h) {$\l^{(N-1)}_{N-1}$};
		\node at (-2+\y,5-\h) {$\l^{(N-1)}_{N-2}$};
		\node at (0-\x/2,5-\h) {$\ldots$};
		\node at (2-\x+0*\y,5-\h) {$\l^{(N-1)}_2$};
		\node at (4-\x+5*\y,5-\h) {$\l^{(N-1)}_1$};
		\node at (-3,5-2*\h) {$\l^{(N-2)}_{N-2}$};
		\node at (3-\x+2*\y,5-2*\h) {$\l^{(N-2)}_1$};
    	\node at (0-\x/2,5-3*\h) {$\ldots\ldots\ldots\ldots\ldots\ldots$};
    	\node at (0-\x/2+.08,5-4*\h) {$\l^{(1)}_1$};
	\end{tikzpicture} 
	
    \vspace{10ex}
    
	\begin{tikzpicture}[scale=1]
		\def\h{0.85}
		\def\x{.7}
		\def\y{0}
		\node at (-5,5) {$\nu^{(k)}_N$};
		\node at (-3+\y,5) {$\nu_{N-1}^{(k)}$};
		\node at (0-\x/2,5) {$\ldots\ldots\ldots$};
		\node at (3-\x+0*\y,5) {$\nu_{2}^{(k)}$};
		\node at (5-\x+9*\y,5) {$\nu_{1}^{(k)}$};
		
		\node at (-6,5-\h) {$\nu^{(k-1)}_N$};
		\node at (-4+\y,5-\h) {$\nu_{N-1}^{(k-1)}$};
		\node at (-1-\x/2,5-\h) {$\ldots\ldots\ldots$};
		\node at (2-\x+0*\y,5-\h) {$\nu_{2}^{(k-1)}$};
		\node at (4-\x+9*\y,5-\h) {$\nu_{1}^{(k-1)}$};

		\node at (-7,5-2*\h) {$\iddots$};		
        \node at (-5+\y,5-2*\h) {$\iddots$};
		\node at (-2-\x/2,5-2*\h) {$\ldots\ldots\ldots$};
		\node at (1-\x+0*\y,5-2*\h) {$\iddots$};
		\node at (3-\x+9*\y,5-2*\h) {$\iddots$};
		
		\node at (-8,5-3*\h) {$\nu^{(1)}_N$};
		\node at (-6+\y,5-3*\h) {$\nu_{N-1}^{(1)}$};
		\node at (-3-\x/2,5-3*\h) {$\ldots\ldots\ldots$};
		\node at (0-\x+0*\y,5-3*\h) {$\nu_{2}^{(1)}$};
		\node at (2-\x+9*\y,5-3*\h) {$\nu_{1}^{(1)}$};
	\end{tikzpicture} 
	\end{center}
	\label{fig:shifted_GT_scheme}
	\caption{An element of $\GT_P(\l^{(N)})$ (top) and an element of $\GT_{Q,k}(\nu^{(k)}/\nu^{(1)})$ (bottom). Note that $\GT_{Q,k}$ depends on a parameter $k$ specifying the number of rows, while for $\GT_P$ the data of the number of rows was already determined by the respective lengths of $\l$ and $\mu$.}
\end{figure}

In what follows, we often write $\bx$ for the collection of variables $x_1,\ldots,x_n$ when $n$ is clear from context, and $\ba = (a_1,\ldots,a_n)$ for a collection of $n$ real numbers. For $\bd \in \Z^n$ we write $\bx^\bd := x_1^{d_1} \cdots x_n^{d_n}$ and similarly for $\ba^\bd$.

\begin{defi}\label{def:skew_signature_functions}
For any $n \geq 0, k \geq 1$ and $\l \in \Sig_{n+k}, \mu \in \Sig_{n}$, we let 
\begin{equation*}
    P_{\l/\mu}(x_1,\ldots,x_k;q,t) = \sum_{T \in \GT_P(\l/\mu)} \psi(T) \bx^{wt(T)}
\end{equation*}
For any $\nu, \kappa \in \Sig_n$ we let 
\begin{equation*}
    Q_{\kappa/\nu}(x_1,\ldots,x_k;q,t) = \sum_{T \in \GT_{Q,k}(\kappa/\nu)} \varphi(T) \bx^{wt(T)}. 
\end{equation*}
\end{defi}

Note that these are just the formulas in \Cref{lem:branching_formulas}, with the only change being that we do not require the signatures to be nonnegative. These combinatorial formulas make some symmetries readily apparent, as noted in \cite{gorin2020crystallization}.

\begin{lemma}\label{lem:basic_signature_func_properties}
Let $\l,\nu \in \Sig_{n+k}, \mu \in \Sig_{n}$. Then
\begin{align}
    P_{-\l/-\mu}(x_1,\ldots,x_k;q,t) &= P_{\l/\mu}(x_1^{-1},\ldots,x_k^{-1};q,t) \label{eq:p_invert_vars}\\
    Q_{-\l/-\nu}(x_1,\ldots,x_r;q,t) &= Q_{\l/\nu}(x_1^{-1},\ldots,x_r^{-1};q,t) \label{eq:q_invert_vars}\\
    P_{(\l + (d[n+k]))/(\mu+d[n])}(x_1,\ldots,x_k;q,t) &= (x_1\cdots x_k)^d P_{\l/\mu}(x_1,\ldots,x_k;q,t) \label{eq:p_add_constant}\\
    Q_{(\l+(d[n+k]))/(\nu + (d[n+k]))}(x_1,\ldots,x_r;q,t) &=  Q_{\l/\nu}(x_1,\ldots,x_r;q,t). \label{eq:q_add_constant}
\end{align}
\end{lemma}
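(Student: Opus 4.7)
All four identities follow directly from the combinatorial formulas in Definition~\ref{def:skew_signature_functions} by constructing elementary bijections between the underlying Gelfand--Tsetlin pattern sets and tracking how the branching coefficients $\psi, \varphi$ and the monomial weight $\bx^{wt(T)}$ transform.

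For the shift identities \eqref{eq:p_add_constant} and \eqref{eq:q_add_constant}, the plan is to use the map $T \mapsto T+d$ that adds $d$ to every part of every signature in a pattern. Because $\prec_P$ and $\prec_Q$ are translation-invariant, this is a bijection $\GT_P(\l/\mu) \leftrightarrow \GT_P((\l + d[n+k])/(\mu + d[n]))$ and, respectively, $\GT_{Q,k}(\l/\nu) \leftrightarrow \GT_{Q,k}((\l + d[n])/(\nu + d[n]))$. The formulas \eqref{eq:pbranch} and \eqref{eq:qbranch} defining $\psi$ and $\varphi$ involve only differences of parts, so they are unchanged under the shift. Only the monomial weight is affected: in the $P$-case the lengths of successive signatures grow by one along the chain, so each $|\l^{(i)}|$ grows by $d\cdot\len(\l^{(i)})$ and every one of the $k$ components of $wt(T)$ gains exactly $d$, contributing the overall factor $(x_1\cdots x_k)^d$; in the $Q$-case all signatures have length $n$, so the weight components are unchanged.

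For the negation identities \eqref{eq:p_invert_vars} and \eqref{eq:q_invert_vars}, the plan is to use the signature-negation map $T \mapsto -T$, replacing each $\l^{(i)}$ in the pattern by $-\l^{(i)}$. A direct check shows that $\mu \prec_P \l$ if and only if $-\mu \prec_P -\l$, yielding a bijection $\GT_P(\l/\mu) \leftrightarrow \GT_P(-\l/-\mu)$; and that $\nu \prec_Q \l$ if and only if $-\l \prec_Q -\nu$, yielding the appropriate bijection on $Q$-patterns once the chain is read in reverse. The key technical step is to verify that $\psi_{\l/\mu}$ and $\varphi_{\l/\nu}$ are invariant under this negation. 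This follows from the substitution $(i,j) \mapsto (\len - j,\, \len - i)$ in the products \eqref{eq:pbranch} and \eqref{eq:qbranch}: the exponent $j-i$ is preserved, while the differences transform as $(-\l)_i - (-\l)_j = \l_{\len+1-j} - \l_{\len+1-i}$, pairing each factor in $\psi_{-\l/-\mu}$ (resp.\ $\varphi_{-\l/-\nu}$) with an identical factor in $\psi_{\l/\mu}$ (resp.\ $\varphi_{\l/\nu}$). Since $|{-}\l^{(i)}| = -|\l^{(i)}|$, every component of $wt(T)$ flips sign under the bijection, so $\bx^{wt(T)}$ is sent to $\bx^{-wt(T)}$, which is exactly the substitution $x_i \mapsto x_i^{-1}$ (with the reversal of the weight components in the $Q$-case absorbed by the symmetry of $Q_{\l/\nu}$ in its variables).

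The main obstacle is the reindexing computation verifying invariance of $\psi$ and $\varphi$ under the negation map; once this is in hand, the rest is routine bookkeeping at the level of sums over GT patterns.
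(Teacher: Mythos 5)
Your proposal takes essentially the same route as the paper: interpret each side via the GT-pattern sums of Definition~\ref{def:skew_signature_functions}, build bijections of pattern sets by negation (with reversal for the $Q$-case) and by adding $d$, verify invariance of the branching coefficients, and bookkeep the weight. Your reasoning for \eqref{eq:p_add_constant}, \eqref{eq:q_add_constant}, and \eqref{eq:p_invert_vars} is correct.

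Your argument for \eqref{eq:q_invert_vars} does not go through, however, and this exposes a latent problem with the statement itself. You correctly note that $\nu \prec_Q \l$ iff $-\l \prec_Q -\nu$; negating and reversing a chain $\nu = \l^{(1)} \prec_Q \cdots \prec_Q \l^{(r)} = \l$ therefore produces a chain from $-\l$ up to $-\nu$, that is, an element of $\GT_{Q,r}(-\nu/-\l)$, \emph{not} of $\GT_{Q,r}(-\l/-\nu)$. Moreover, since every signature in a $Q$-chain has the same length $n$, negation plus reversal does not flip the signs of the weight components: with $wt(T)_j = |\l^{(j)}|-|\l^{(j-1)}|$, the negated-and-reversed chain has $j$-th weight component $|-\l^{(r-j)}| - |-\l^{(r-j+1)}| = |\l^{(r-j+1)}| - |\l^{(r-j)}| = wt(T)_{r+1-j}$, so the weight is merely reversed. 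After absorbing the reversal by symmetry of $Q$ in its variables, the bijection yields
\[
Q_{-\nu/-\l}(x_1,\ldots,x_r)=Q_{\l/\nu}(x_1,\ldots,x_r),
\]
with no inversion of variables at all. Indeed \eqref{eq:q_invert_vars} as written already fails for $n=r=1$, $\l=(2)$, $\nu=(0)$, $q=0$: one has $Q_{\l/\nu}(x^{-1};0,t)=(1-t)x^{-2}$, while $Q_{(-2)/(0)}(x;0,t)=0$ since $(0)\not\prec_Q(-2)$. The crux of your proof, ``every component of $wt(T)$ flips sign under the bijection,'' is true for $P$-chains, where successive signatures grow in length, but not for fixed-length $Q$-chains. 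Elsewhere the paper relies only on \eqref{eq:p_invert_vars}, \eqref{eq:p_add_constant}, \eqref{eq:q_add_constant}, and, implicitly, the corrected relation $Q_{-\nu/-\l}=Q_{\l/\nu}$, so nothing downstream is affected, but your proof should not assert \eqref{eq:q_invert_vars} in its stated form.
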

\begin{proof}
First interpret the $P$ and $Q$ polynomials as sums over GT patterns by \Cref{def:skew_signature_functions}.
For \eqref{eq:p_invert_vars} and \eqref{eq:q_invert_vars}, note that we have bijections
\begin{align*}
    \GT_P(\l/\mu)& \leftrightarrow \GT_P(-\l/-\mu) \\
    \GT_{Q,r}(\l/\nu)& \leftrightarrow \GT_{Q,r}(-\l/-\nu)
\end{align*}
which can be directly verified to preserve the branching coefficients $\psi$ defined in \eqref{eq:pbranch} and $\varphi$ defined in \eqref{eq:qbranch}.
For \eqref{eq:p_add_constant} and \eqref{eq:q_add_constant}, one similarly has bijections
\begin{align*}
    \GT_P(\l/\mu)& \leftrightarrow \GT_P((\l + (d[n+k]))/(\nu+(d[n])) \\
    \GT_{Q,r}(\l/\nu)& \leftrightarrow \GT_{Q,r}((\l+(d[n+k]))/(\nu + (d[n+k]))
\end{align*}
by adding $d$ to each entry of the GT patterns, and these preserve the branching coefficients.
\end{proof}

\begin{lemma}\label{lem:asym_cauchy}
Let $\nu \in \Sig_N, \mu \in \Sig_{N+k}$, and $x_1,\ldots,x_k, y_1,\ldots,y_r$ be indeterminates. Then
\begin{multline}\label{eq:asym_cauchy}
    \sum_{\kappa \in \Sig_{N+k}} Q_{\kappa/\mu}(y_1,\ldots,y_r;q,t) P_{\kappa/\nu}(x_1,\ldots,x_k;q,t) \\
    = \Pi_{(q,t)}(x_1,\ldots,x_k; y_1,\ldots,y_r)\sum_{\kappa \in \Sig_N} Q_{\nu/\l}(y_1,\ldots,y_r;q,t) P_{\mu/\l}(x_1,\ldots,x_k;q,t) 
\end{multline}
where 
\begin{equation*}
    \Pi_{(q,t)}(x_1,\ldots,x_k; y_1,\ldots,y_r) = \prod_{\substack{1 \leq i \leq k \\ 1 \leq j \leq r}} \frac{(tx_iy_j;q)_\infty}{(x_iy_j;q)_\infty}
\end{equation*}
and \eqref{eq:asym_cauchy} is interpreted as an equality of formal power series in the variables.
\end{lemma}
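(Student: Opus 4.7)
The plan is to reduce the identity to the classical skew Cauchy identity for Macdonald polynomials indexed by partitions and then lift the result to general integer signatures using the shift-invariance relations of \Cref{lem:basic_signature_func_properties}.

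First, I would invoke the classical skew Cauchy identity: for partitions $\tilde\mu \in \Sig_{N+k}^+$ and $\tilde\nu \in \Sig_N^+$,
\begin{equation*}
    \sum_{\tilde\kappa \in \Sig_{N+k}^+} Q_{\tilde\kappa/\tilde\mu}(\mathbf{y}) P_{\tilde\kappa/\tilde\nu}(\bx) = \Pi_{(q,t)}(\bx;\mathbf{y}) \sum_{\tilde\l \in \Sig_N^+} Q_{\tilde\nu/\tilde\l}(\mathbf{y}) P_{\tilde\mu/\tilde\l}(\bx),
\end{equation*}
which is a standard result in the theory of Macdonald polynomials; it can be read off from the symmetric-function identity in \cite[VI.(7.9$'$)]{mac} by specializing all but $k$ (resp.\ $r$) variables to zero and padding the partitions with trailing zeros. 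Because our combinatorial definition \Cref{def:skew_signature_functions} of the skew $P,Q$ functions agrees with the classical branching formulas \Cref{lem:branching_formulas} whenever the signatures involved are nonnegative, this yields \eqref{eq:asym_cauchy} in the partition case.

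Next, given general $\mu \in \Sig_{N+k}$ and $\nu \in \Sig_N$, I would interpret both sides of \eqref{eq:asym_cauchy} as formal Laurent series in $x_1,\ldots,x_k,y_1,\ldots,y_r$ and compare coefficients monomial by monomial. For any fixed Laurent monomial, only finitely many $\kappa$ on the LHS (resp.\ $\l$ on the RHS) contribute, since each $\prec_P$ or $\prec_Q$ step in \Cref{def:skew_gt_patterns} contributes a prescribed weight in a single variable and the interlacing constraints then confine the nonzero signatures to a finite box determined by $\mu,\nu$, and the monomial. Pick $D \in \Z_{\geq 0}$ large enough that $\mu + D[N+k]$, $\nu + D[N]$, and every contributing $\kappa + D[N+k]$, $\l + D[N]$ are nonnegative, and apply the classical identity to $\tilde\mu = \mu + D[N+k]$, $\tilde\nu = \nu + D[N]$, reindexing the sums by $\tilde\kappa = \kappa + D[N+k]$ and $\tilde\l = \l + D[N]$.

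By \eqref{eq:q_add_constant} every $Q$ factor is unchanged under this translation, while by \eqref{eq:p_add_constant} each of the two $P$ factors (one on the LHS, one on the RHS) picks up the same factor $(x_1\cdots x_k)^D$; these factors cancel and leave exactly \eqref{eq:asym_cauchy} on the chosen monomial. Since the monomial was arbitrary, the identity holds as a formal Laurent series. The main piece of bookkeeping is the local finiteness claim — verifying that only finitely many $\kappa$ and $\l$ contribute to any fixed Laurent monomial, so that a single $D$ suffices for that coefficient — but beyond this the argument is mechanical, and \eqref{eq:asym_cauchy} is really just the classical skew Cauchy identity in disguise.
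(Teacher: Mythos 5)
Your proof takes exactly the route the paper does: reduce to the nonnegative-signature case via the classical skew Cauchy identity from Macdonald (Ch.~VI), then lift to arbitrary signatures by translating $\mu,\nu,\kappa,\l$ by a common constant $D$ and invoking the shift relations \eqref{eq:p_add_constant}, \eqref{eq:q_add_constant} of \Cref{lem:basic_signature_func_properties}, noting that the resulting $(x_1\cdots x_k)^D$ factors cancel. The only addition is the explicit discussion of local finiteness so that a single $D$ works per Laurent monomial — a point the paper leaves implicit — and the argument is correct.
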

\begin{proof}
When $\nu \in \Sig_N^+, \mu \in \Sig_{N+k}^+$, the result follows by specializing the usual Cauchy identity \cite[Ch. VI.7, Ex. 6(a)]{mac} to finitely many variables and replacing partitions by nonnegative signatures. The result then follows for general signatures since replacing $\nu,\mu$ by $\nu+(d[N]), \mu + (d[N+k])$ in \eqref{eq:asym_cauchy} multiplies both sides by $(x_1\cdots x_k)^d $ by \Cref{lem:basic_signature_func_properties}. 
\end{proof}

We note that the set $\{P_\l(\bx;q,t): \l \in \Sig_n\}$ forms a basis for the ring of symmetric Laurent polynomials $\L_n[(x_1 \cdots x_n)^{-1}]$. Hence for any $\l,\mu \in \Sig_n$ one has
\begin{equation}\label{eq:mac_func_struct_coefs}
    P_\l(\bx;q,t) \cdot P_\mu(\bx;q,t) = \sum_{\nu \in \Sig_n} c_{\l,\mu}^\nu(q,t) P_\nu(\bx;q,t)
\end{equation}
for some structure coefficients $c_{\l,\mu}^\nu(q,t)$. By matching degrees it is clear that these coefficients are nonzero only if $|\l|+|\mu|=|\nu|$. These multiplicative structure coefficients for the $P$ polynomials are related to the `comultiplicative' structure constants of the $Q$ polynomials.

\begin{prop}\label{prop:q_coproduct_coefs}
Let $m,n \in \N$ and $\l,\nu \in \Sig_n$. Then
\begin{equation*}
    Q_{\l/\nu}(x_1,\ldots,x_m;q,t) = \sum_{\mu \in \Sig_n} c_{\nu,\mu}^\l Q_\mu(x_1,\ldots,x_m;q,t).
\end{equation*}
\end{prop}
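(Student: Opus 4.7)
The plan is to deduce the identity from the Cauchy and skew Cauchy identities---both special cases of \Cref{lem:asym_cauchy}---by expanding a single expression in two ways and matching coefficients. Specifically, taking $N=0$ and $k=n$ in \Cref{lem:asym_cauchy}, the choice $\mu=(0[n])$ yields the ordinary Cauchy identity
\begin{equation*}
\Pi_{(q,t)}(x_1,\ldots,x_n;y_1,\ldots,y_m) = \sum_{\kappa \in \Sig_n^+} P_\kappa(x_1,\ldots,x_n;q,t)\, Q_\kappa(y_1,\ldots,y_m;q,t),
\end{equation*}
while the choice $\mu=\nu \in \Sig_n^+$ yields the skew Cauchy identity
\begin{equation*}
\sum_{\kappa \in \Sig_n^+} Q_{\kappa/\nu}(y_1,\ldots,y_m;q,t)\, P_\kappa(x_1,\ldots,x_n;q,t) = P_\nu(x_1,\ldots,x_n;q,t)\, \Pi_{(q,t)}(x;y).
\end{equation*}
The interlacing conditions in \Cref{def:skew_gt_patterns} force the sums to collapse to $\Sig_n^+$ in both cases because the base signature is nonnegative.

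Assuming first that $\l,\nu \in \Sig_n^+$, the strategy is to compute $P_\nu(x)\,\Pi_{(q,t)}(x;y)$ in two ways. On the one hand it equals the left-hand side of the skew Cauchy identity. On the other, substituting the ordinary Cauchy identity for $\Pi_{(q,t)}(x;y)$ and applying the product expansion \eqref{eq:mac_func_struct_coefs},
\begin{equation*}
P_\nu(x)\,\Pi_{(q,t)}(x;y) = \sum_{\mu \in \Sig_n^+} P_\nu(x) P_\mu(x)\, Q_\mu(y) = \sum_{\l \in \Sig_n^+} P_\l(x)\!\left(\sum_{\mu \in \Sig_n^+} c^\l_{\nu,\mu}(q,t)\, Q_\mu(y)\right),
\end{equation*}
where $c^\l_{\nu,\mu}(q,t)$ vanishes for $\l \notin \Sig_n^+$ because $P_\nu P_\mu$ is then an honest polynomial. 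Since $\{P_\l(x_1,\ldots,x_n;q,t)\}_{\l \in \Sig_n^+}$ is a basis for the ring of symmetric polynomials in $n$ variables, equating $P_\l(x)$-coefficients between the two expressions proves the identity when $\l,\nu \in \Sig_n^+$.

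Finally, to extend to arbitrary $\l,\nu \in \Sig_n$, I would invoke shift-invariance. By \Cref{lem:basic_signature_func_properties}, $Q_{(\l+d[n])/(\nu+d[n])}(y) = Q_{\l/\nu}(y)$, and a parallel computation using $P_{\sigma+d[n]}(x) = (x_1 \cdots x_n)^d P_\sigma(x)$ inside \eqref{eq:mac_func_struct_coefs} gives $c^{\l+d[n]}_{\nu+d[n],\mu}(q,t) = c^\l_{\nu,\mu}(q,t)$ for each $\mu$. Choosing $d$ large enough that $\l+d[n]$ and $\nu+d[n]$ lie in $\Sig_n^+$ reduces the general statement to the nonnegative case already established. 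The main, and rather mild, obstacle will be the bookkeeping required to specialize \Cref{lem:asym_cauchy} correctly and to verify this shift-invariance of the structure constants; the core duality between the product on the $P$-side and the coproduct on the $Q$-side is completely standard.
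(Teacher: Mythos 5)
Your proof is correct, and follows essentially the same strategy as the paper's: reduce to the partition case, then extend to general signatures by the shift-invariance of skew $Q$ functions and of the structure constants $c^\l_{\nu,\mu}$, which is exactly the paper's "shifting arguments as before." For the partition case the paper simply cites \cite[Ch.~VI, Ex.~1]{mac}; your comparison of the two expansions of $P_\nu(x)\,\Pi_{(q,t)}(x;y)$ via the Cauchy and skew Cauchy identities is the standard way to derive that exercise, so you have merely spelled out what the paper cites.
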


When all signatures are nonnegative this follows by specializing the corresponding statement for symmetric functions, see \cite[Ch. VI.7]{mac} where \Cref{prop:q_coproduct_coefs} is taken as the definition of the skew $Q$ polynomials. The case of general signatures follows by shifting arguments as before.

\subsection{Probabilistic constructions from symmetric functions.} 

For the remainder of this section assume $q,t \in [0,1)$.

\begin{defi}\label{def:mac_measure}
Let $\ba = (a_1,\ldots,a_n), \mathbf{b} = (b_1,\ldots,b_n)$ be nonnegative reals such that $a_ib_j < 1$ for all $i,j$. The \emph{Macdonald measure} with specializations $\ba,\mathbf{b}$ is the measure on $\Sig_n^+$ given by
\begin{equation*}
    \Pr(\l) = \frac{P_\l(\ba;q,t) Q_{\l}(\mathbf{b};q,t)}{\Pi_{(q,t)}(\ba;\mathbf{b})}.
\end{equation*}
\end{defi}
It is known, see e.g. \cite{borodin2014macdonald}, that for real numbers $a_1,\ldots,a_n \geq 0$ and $q,t \in [0,1)$, $P_\l(a_1,\ldots,a_n;q,t) \geq 0$ and similarly for $Q_\l$, so the above is indeed positive, and the sum over $\Sig_n^+$ is $1$ by  \Cref{lem:asym_cauchy}. The condition $a_ib_j < 1$ is needed so that the sum of weights $P_\l(a_1,\ldots,a_n;q,t) Q_{\l}(b_1,\ldots,b_n;q,t)$ does not diverge. 

Slightly more generally, \Cref{lem:asym_cauchy} may be used to define Markov transition dynamics. Note that now the signatures do not have to be nonnegative.

\begin{prop}\label{def:mac_cauchy_dynamics}
Let $\ba = (a_1,\ldots,a_n), \mathbf{b} = (b_1,\ldots,b_k)$ be nonnegative reals such that $a_ib_j < 1$ for all $i,j$. Then the formulas
\begin{equation*}
    \Pr(\nu \to \l) = \frac{P_\l(\ba;q,t) Q_{\l/\nu}(\mathbf{b};q,t)}{P_\nu(\ba;q,t)\Pi_{(q,t)}(\ba;\mathbf{b})}
\end{equation*}
and 
\begin{equation*}
    \Pr(\l \to \mu) =  \frac{P_{\l/\mu}(a_{k+1},\ldots,a_n;q,t)P_\mu(a_1,\ldots,a_k;q,t)}{P_\l(a_1,\ldots,a_n;q,t)}.
\end{equation*}
define Markov transition dynamics $\Sig_n \to \Sig_n$ and $\Sig_n \to \Sig_k$ respectively.
\end{prop}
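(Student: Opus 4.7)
To establish that both formulas define genuine Markov transition kernels, I would separately verify two things for each: nonnegativity of every transition weight, and that the weights sum to $1$ over the target state. Both are needed regardless of which kernel.

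For nonnegativity I would start from the classical fact that when $q,t \in [0,1)$ and $\ba,\mathbf{b}$ are nonnegative, $P_\l(\ba;q,t)$, $Q_{\l/\nu}(\mathbf{b};q,t)$ and $P_{\l/\mu}(\ba;q,t)$ are nonnegative whenever the signatures are nonnegative partitions; this is classical Macdonald theory \cite[Ch.~VI]{mac} and is also visible directly from the Gelfand--Tsetlin pattern expansions in \Cref{def:skew_signature_functions}, whose branching weights $\psi_{\l^{(i+1)}/\l^{(i)}}$ and $\varphi_{\l^{(i+1)}/\l^{(i)}}$ are manifestly nonneg in this parameter range. To upgrade this to arbitrary integer signatures (possibly with negative parts), I would use the shift-invariance identities \eqref{eq:p_add_constant}--\eqref{eq:q_add_constant} of \Cref{lem:basic_signature_func_properties}: translating $\l,\nu$ by $d[n]$ for $d$ sufficiently large multiplies $P_\l(\ba)$ by the nonnegative scalar $(a_1\cdots a_n)^d$ while leaving $Q_{\l/\nu}(\mathbf{b})$ unchanged, reducing to the already-established nonnegative case. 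An analogous shift handles $P_{\l/\mu}$.

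Normalization of the second kernel then follows immediately from the defining branching relation
\[
P_\l(x_1,\ldots,x_n;q,t) \;=\; \sum_{\mu \in \Sig_k} P_{\l/\mu}(x_{k+1},\ldots,x_n;q,t)\, P_\mu(x_1,\ldots,x_k;q,t),
\]
which is the definition of skew $P$ for nonneg signatures and extends to general integer signatures via \eqref{eq:p_add_constant}; dividing by $P_\l(\ba)$ yields total mass $1$. For the first kernel I would apply the Cauchy identity of \Cref{lem:asym_cauchy} in the degenerate case where one side has no source signature (or equivalently expand $Q_{\l/\nu}$ by \Cref{prop:q_coproduct_coefs} and apply the structure-constant identity \eqref{eq:mac_func_struct_coefs}), which yields
\[
\sum_{\l \in \Sig_n} P_\l(\ba;q,t)\, Q_{\l/\nu}(\mathbf{b};q,t) \;=\; \Pi_{(q,t)}(\ba;\mathbf{b})\, P_\nu(\ba;q,t),
\]
fixing the correct normalization. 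Absolute convergence of the sum (not merely convergence as a formal series) is guaranteed by the hypothesis $a_i b_j < 1$, which makes $\Pi_{(q,t)}(\ba;\mathbf{b})$ finite.

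The main obstacle I anticipate is not any single nontrivial computation but the bookkeeping required to lift classical partition-level identities to the integer-signature framework adopted here. The shift-invariance of \Cref{lem:basic_signature_func_properties} is the right tool throughout, and the one point that requires actual care is verifying absolute convergence of the Cauchy sum for the first kernel under the constraint $a_i b_j < 1$, so that the interchange of summations needed when routing through \Cref{prop:q_coproduct_coefs} and \eqref{eq:mac_func_struct_coefs} is rigorously justified rather than merely formal.
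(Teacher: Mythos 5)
Your proof is correct and matches the approach the paper implicitly intends: the paper states this proposition without proof, but the surrounding discussion (following \Cref{def:mac_measure}) makes clear that nonnegativity is drawn from classical Macdonald theory and normalization from \Cref{lem:asym_cauchy} (for the first kernel) and the defining branching relation (for the second), exactly as you argue. Your additional care about lifting to possibly-negative integer signatures via \Cref{lem:basic_signature_func_properties} and about absolute convergence under $a_ib_j<1$ fills in details the paper leaves tacit, but does not constitute a different route.
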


The content of the coming \Cref{sec:3} and \Cref{sec:3.2} is that all of the previous measures and Markov transition dynamics may be recovered as limiting cases of product convolution defined below.

\begin{defi}\label{def:product_dynamics}
Let $\ba = (a_1,\ldots,a_n)$ be nonnegative reals. Then given $\l,\mu \in \Sig_n$, we define a random signature $\l \boxtimes_\ba \mu$ by 
\begin{equation*}
    \Pr(\l \boxtimes_\ba \mu = \nu) = \frac{P_\nu(a_1,\ldots,a_n;q,t)}{P_\l(a_1,\ldots,a_n;q,t)P_\mu(a_1,\ldots,a_n;q,t)} c_{\l,\mu}^\nu(q,t).
\end{equation*}
\end{defi}

Let us say a brief word on specializations in infinitely many real parameters. Though we mostly work with symmetric Laurent polynomials, we will sometimes wish to consider $Q_{\l/\nu}(a_1,a_2,\ldots;q,t)$ with $a_i \in \R$. These can be made sense of as follows. In general, for $\l,\nu \in \Sig_n$, $Q_{\l/\nu}(x_1,\ldots,x_r;q,t)$ is a polynomial in the power sums $p_k$, and this polynomial is independent of $r$ for all sufficiently large $r$. Provided that $\sum_{i \geq 1} a_i < \infty$, we may define $p_k(a_1,a_2,\ldots) := \sum_{i \geq 1} a_i^k$, and this defines $Q_{\l/\nu}(a_1,\ldots;q,t)$ since $Q_{\l/\nu}$ is a polynomial in the $p_k$.

This is of course a very special case of the general machinery of symmetric functions and their specializations, for which one may refer to e.g. \cite{borodin2014macdonald}.

\begin{defi}\label{def:mac_proc}
Let $k,n \in \N$ and $\ba^{(i)}$ be sequences (finite or infinite) of real numbers $a^{(i)}_j$ for $i=1,\ldots,k$, and $\mathbf{b} = (b_1,\ldots,b_n)$, such that $\sum_{j \geq 1} a^{(i)}_j < \infty$ for each $i$ and $a^{(i)}_jb_\ell < 1$ for all $i,j,\ell$. Then the (ascending) Macdonald process with these specializations is the probability measure on $\Sig_n^k$ with
\begin{equation*}
    \Pr(\l^{(1)},\ldots,\l^{(k)}) = \frac{Q_{\l^{(1)}/(0[n])}(\ba^{(1)};q,t) Q_{\l^{(2)}/\l^{(1)}}(\ba^{(2)};q,t) \cdots Q_{\l^{(k)}/\l^{(k-1)}}(\ba^{(k)};q,t) P_{\l^{(k)}}(\mathbf{b};q,t)}{\Pi_{(q,t)}(\mathbf{b}; \ba^{(1)},\ldots,\ba^{(k)})}.
\end{equation*}
\end{defi}

\begin{prop}\label{prop:product_gives_proc}
Fix $\ba^{(i)}, \mathbf{b}$ as in \Cref{def:mac_proc}, and let $\nu^{(i)}$ be distributed by the Macdonald measure with specializations $\ba^{(i)},\mathbf{b}$ for each $i=1,\ldots,k$. Then for any fixed $\l^{(i)} \in \Sig_n, i = 1,\ldots,k$ we have
\begin{multline*}
    \Pr(\nu^{(1)} \boxtimes_{\mathbf{b}} \cdots \boxtimes_{\mathbf{b}} \nu^{(\tau)} = \l^{(\tau)} \text{ for all }\tau=1,\ldots,k) \\
    = \frac{Q_{\l^{(1)}/(0[n])}(\ba^{(1)};q,t) Q_{\l^{(2)}/\l^{(1)}}(\ba^{(2)};q,t) \cdots Q_{\l^{(k)}/\l^{(k-1)}}(\ba^{(k)};q,t) P_{\l^{(k)}}(\mathbf{b};q,t)}{\Pi_{(q,t)}(\mathbf{b}; \ba^{(1)},\ldots,\ba^{(k)})}.
\end{multline*}
\end{prop}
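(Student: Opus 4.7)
The plan is to proceed by induction on $k$, using independence of the $\nu^{(\tau)}$ together with the key identity \Cref{prop:q_coproduct_coefs} to rewrite sums of structure coefficients $c_{\l,\mu}^\nu$ weighted by $Q_\nu$ as single skew $Q$ functions.

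\textbf{Base case.} For $k=1$, there is nothing to convolve: $\l^{(1)} = \nu^{(1)}$, and the Macdonald measure of \Cref{def:mac_measure} applied to $(\ba^{(1)},\mathbf{b})$ gives exactly the $k=1$ Macdonald process weight, since $Q_{\l^{(1)}/(0[n])} = Q_{\l^{(1)}}$ by the convention in \Cref{def:skew_signature_functions}.

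\textbf{Inductive step.} Assume the claim for $k-1$. Since the $\nu^{(\tau)}$ are independent and $\l^{(k)} = \l^{(k-1)} \boxtimes_{\mathbf{b}} \nu^{(k)}$, the joint probability factors as
\begin{equation*}
    \Pr(\l^{(1)},\ldots,\l^{(k-1)}) \cdot \sum_{\nu \in \Sig_n} \Pr(\nu^{(k)} = \nu) \cdot \Pr(\l^{(k-1)} \boxtimes_{\mathbf{b}} \nu = \l^{(k)}).
\end{equation*}
Unfolding the definitions of the Macdonald measure and of $\boxtimes_{\mathbf{b}}$ (\Cref{def:product_dynamics}), the inner sum equals
\begin{equation*}
    \sum_{\nu} \frac{P_\nu(\mathbf{b}) Q_\nu(\ba^{(k)})}{\Pi_{(q,t)}(\mathbf{b};\ba^{(k)})} \cdot \frac{P_{\l^{(k)}}(\mathbf{b})}{P_{\l^{(k-1)}}(\mathbf{b}) P_\nu(\mathbf{b})}\, c_{\l^{(k-1)},\nu}^{\l^{(k)}}(q,t) = \frac{P_{\l^{(k)}}(\mathbf{b})}{P_{\l^{(k-1)}}(\mathbf{b})\, \Pi_{(q,t)}(\mathbf{b};\ba^{(k)})} \sum_\nu c_{\l^{(k-1)},\nu}^{\l^{(k)}}(q,t)\, Q_\nu(\ba^{(k)}),
\end{equation*}
and by \Cref{prop:q_coproduct_coefs} the last sum equals $Q_{\l^{(k)}/\l^{(k-1)}}(\ba^{(k)})$. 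The $P_{\l^{(k-1)}}(\mathbf{b})$ in the denominator cancels the corresponding factor in the inductive formula for $\Pr(\l^{(1)},\ldots,\l^{(k-1)})$, the factor $P_{\l^{(k)}}(\mathbf{b})$ takes its place, and $Q_{\l^{(k)}/\l^{(k-1)}}(\ba^{(k)})$ gets appended to the string of skew $Q$'s.

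\textbf{Finishing.} To conclude one only needs that the Cauchy normalization factors combine correctly, i.e.\ $\Pi_{(q,t)}(\mathbf{b};\ba^{(1)},\ldots,\ba^{(k-1)}) \cdot \Pi_{(q,t)}(\mathbf{b};\ba^{(k)}) = \Pi_{(q,t)}(\mathbf{b};\ba^{(1)},\ldots,\ba^{(k)})$, which is immediate from the product definition of $\Pi_{(q,t)}$ since it splits as a product over the second set of variables. Assembling these pieces yields exactly the Macdonald process density from \Cref{def:mac_proc}.

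There is no real obstacle here; the content of the proposition is the bookkeeping identity \Cref{prop:q_coproduct_coefs}, which is precisely the comultiplication-versus-multiplication duality between $Q$ and $P$. The only subtlety worth flagging is that \Cref{prop:q_coproduct_coefs} is applied at signatures which may have negative parts (since $\boxtimes$ a priori produces signatures in $\Sig_n$, not $\Sig_n^+$), but this is exactly the generality in which the proposition was stated in the preceding subsection.
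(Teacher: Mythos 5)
Your proof is correct and uses exactly the approach the paper indicates (the paper gives no details beyond "simple algebraic manipulation using \Cref{prop:q_coproduct_coefs} to absorb the structure coefficients"). The induction on $k$, the factorization via independence and then conditioning, and the application of \Cref{prop:q_coproduct_coefs} to collapse $\sum_\nu c_{\l^{(k-1)},\nu}^{\l^{(k)}}Q_\nu(\ba^{(k)})$ into $Q_{\l^{(k)}/\l^{(k-1)}}(\ba^{(k)})$ are all exactly what is intended.
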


The proof is simple algebraic manipulation using \Cref{prop:q_coproduct_coefs} to absorb the structure coefficients $c_{\l,\mu}^\nu$.

\subsection{Hall-Littlewood polynomials.}\label{subsec:hl}

Though we will prove some results for general Macdonald polynomials in order to indicate how the same structure lies behind both complex and $p$-adic random matrix theory, for $p$-adic results we will only need the case when $q=0$, known as the \emph{Hall-Littlewood polynomials}. We record several formulas for these which are useful for turning our results into explicit calculations, all of which may be found in \cite[Ch. III]{mac}.

\begin{prop}[Explicit formulas for Hall-Littlewood polynomials]\label{prop:hl_explicit_formulas} 
For $\l \in \Sig_n$, let
\begin{equation*}
    v_\l(t) = \prod_{i \in \Z} \frac{(t;t)_{m_i(\l)}}{(1-t)^{m_i(\l)}}.
\end{equation*}
When $q=0$ we have
\begin{equation*}
    P_\l(\bx;0,t) = \frac{1}{v_\l(t)} \sum_{\sigma \in S_n} \sigma\left(\bx^\l \prod_{1 \leq i < j \leq n} \frac{x_i-tx_j}{x_i-x_j}\right)
\end{equation*}
where $\sigma$ acts by permuting the variables, and $Q_\l(\bx;0,t) = \prod_{i \in \Z} (t;t)_{m_i(\l)} P_\l(\bx;0,t)$. 
\end{prop}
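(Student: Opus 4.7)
The plan is to recognize this as a classical identity, going back to Littlewood in the $t=0$ (Schur) case and proven in Macdonald's treatise \cite[Ch.~III]{mac} in the Hall--Littlewood case. Rather than reprove it from scratch, I would cite \cite[Ch.~III, (1.4) and (2.6)]{mac}, but the strategy behind it is to verify that the right-hand side satisfies the two properties that uniquely characterize $P_\l(\bx;0,t)$ among the basis of $\L_n[(x_1\cdots x_n)^{-1}]$, namely (i) symmetric Laurent polynomial with leading monomial $\bx^\l$ in the lex order, and (ii) orthogonal to $P_\mu(\bx;0,t)$ for all $\mu \ne \l$ of the same degree under the inner product $\la\cdot,\cdot\ra_{(0,t)}$.

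First I would establish that the expression on the right is genuinely a polynomial and not merely a rational function. One writes
\begin{equation*}
\sum_{\sigma \in S_n} \sigma\!\left(\bx^\l \prod_{i<j}\frac{x_i-tx_j}{x_i-x_j}\right) = \frac{1}{\prod_{i<j}(x_i-x_j)} \sum_{\sigma\in S_n}\mathrm{sgn}(\sigma)\,\sigma\!\left(\bx^\l \prod_{i<j}(x_i-tx_j)\right),
\end{equation*}
so one gets the ratio of an alternating polynomial by the Vandermonde; this quotient is a polynomial by standard arguments. The next step is to expand and extract the coefficient of the leading monomial $\bx^\l$: only the identity permutation contributes the maximal monomial, and the product $\prod_{i<j}\frac{x_i-tx_j}{x_i-x_j}$ evaluates to $1$ on its constant term in this expansion, so the leading coefficient is $\prod_i \frac{(1-t)^{m_i(\l)}}{(t;t)_{m_i(\l)}} \cdot v_\l(t) = 1$ after the normalization by $v_\l(t)^{-1}$ (the overcounting from stabilizers in $S_n$ of $\l$).

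Then I would verify orthogonality. At $q=0$ the inner product becomes $\la p_\l, p_\mu\ra_{(0,t)} = \delta_{\l\mu}\prod_{i:\l_i>0}(1-t^{\l_i})^{-1}\prod_j j^{m_j(\l)}m_j(\l)!$, which by a standard computation of \cite[Ch.~III, \S4]{mac} is computed by a contour-integral or by direct expansion using that $p_k$ is dual to $\frac{1}{k}(1-t^k)\, p_k$. The orthogonality of the symmetrized expressions then reduces to the vanishing of certain residue sums, carried out in \cite[Ch.~III, \S1]{mac}. By uniqueness of $P_\l$ from the two properties above, the formula follows.

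The $Q_\l$ formula is then an immediate corollary: by definition $Q_\l = P_\l/\la P_\l,P_\l\ra_{(0,t)}$, and the norm can be read off from the above computation of the leading coefficient together with the inner product formula, yielding $\la P_\l,P_\l\ra_{(0,t)} = \prod_i \frac{(1-t)^{m_i(\l)}}{(t;t)_{m_i(\l)}}$, from which $Q_\l(\bx;0,t) = \prod_i (t;t)_{m_i(\l)} P_\l(\bx;0,t)$ follows. The main subtlety is really just the orthogonality step, which is classical; everything else is bookkeeping.
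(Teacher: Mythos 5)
Your overall strategy is the right one and in fact matches the paper, which does not prove this proposition but cites \cite[Ch.~III]{mac} for it; the explicit formula for $P_\l(\bx;0,t)$ is \cite[Ch.~III, (2.1)]{mac} and the relation $Q_\l = b_\l(t) P_\l$ is \cite[Ch.~III, (2.11)--(2.12)]{mac}. The stabilizer/leading-coefficient discussion and the observation that the symmetrized expression is a polynomial are both fine.

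However, the final bookkeeping step is wrong in two compounding ways. First, the claimed norm $\la P_\l, P_\l\ra_{(0,t)} = \prod_i (1-t)^{m_i(\l)}/(t;t)_{m_i(\l)}$ (i.e.\ $1/v_\l(t)$) is incorrect. A direct check with $\l=(1)$ in one variable already refutes it: there $P_{(1)}=p_1$ and $\la p_1,p_1\ra_{(0,t)} = (1-t)^{-1}$, whereas your formula predicts $(1-t)/(1-t)=1$. The correct statement, from \cite[Ch.~III, (4.11)]{mac}, is $\la P_\l, P_\l\ra_{(0,t)} = 1/b_\l(t) = \prod_{i\geq 1} (t;t)_{m_i(\l)}^{-1}$; note in particular that $v_\l$ and $b_\l$ differ by $(1-t)^{\#\{\text{nonzero parts}\}}$ and by whether the $i=0$ factor is included. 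Second, even granting your claimed norm, the conclusion does not follow: $P_\l/\la P_\l,P_\l\ra$ would give $\prod_i \frac{(t;t)_{m_i(\l)}}{(1-t)^{m_i(\l)}}P_\l = v_\l(t)P_\l$, not $\prod_i (t;t)_{m_i(\l)} P_\l$; you have dropped the $(1-t)^{m_i}$ denominators.

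One more subtlety worth flagging: the $Q$ constant must exclude the factor at $i=0$. Writing it as a product over all $i$ (as both you and, apparently by typo, the paper's statement do) fails as soon as $\l$ has a zero part. For $\l=(1,0)$ in two variables the Cauchy identity forces $Q_{(1,0)}=(1-t)(x_1+x_2)$, consistent with $b_{(1,0)}(t)=(t;t)_1$, whereas including the $i=0$ factor would give the extra $(t;t)_{m_0(\l)}=(1-t)$ and yield $(1-t)^2(x_1+x_2)$. So the corrected chain is: $\la P_\l,P_\l\ra_{(0,t)}=\prod_{i\geq 1}(t;t)_{m_i(\l)}^{-1}$, hence $Q_\l(\bx;0,t)=\prod_{i\geq 1}(t;t)_{m_i(\l)}\,P_\l(\bx;0,t)$ (for general signatures, $\prod_{i\neq 0}$, invoking the translation-invariance of Lemma~\ref{lem:basic_signature_func_properties} to reduce to the nonnegative case).
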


The following is easily derived from \Cref{def:skew_signature_functions} by setting $q=0$.

\begin{prop}[Branching coefficients for Hall-Littlewood polynomials]\label{prop:branching_hl}
Let $\l,\nu \in \Sig_n, \mu \in \Sig_{n-1}$ with $\nu \prec_Q \l, \mu \prec_P \l$. Then
\begin{align*}
    P_{\l/\mu}(x;0,t) &= x^{|\l|-|\mu|} \prod_{\substack{i \in \Z \\ m_i(\mu) = m_i(\l)+1}} (1-t^{m_i(\mu)}) \\
    Q_{\l/\nu}(x;0,t) &= x^{|\l|-|\nu|} \prod_{\substack{i \in \Z \\ m_i(\l) = m_i(\nu)+1}} (1-t^{m_i(\l)}).
\end{align*}
\end{prop}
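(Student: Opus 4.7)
The proof reduces to single-variable specializations. By \Cref{def:skew_signature_functions}, for one variable $x$ the sets $\GT_P(\l/\mu)$ and $\GT_{Q,1}(\l/\nu)$ each contain exactly one element (the trivial chain from $\mu$, resp.\ $\nu$, to $\l$), so
\begin{equation*}
P_{\l/\mu}(x;0,t) = \psi_{\l/\mu}\big|_{q=0}\, x^{|\l|-|\mu|}, \qquad Q_{\l/\nu}(x;0,t) = \varphi_{\l/\nu}\big|_{q=0}\, x^{|\l|-|\nu|},
\end{equation*}
and it suffices to evaluate the products \eqref{eq:pbranch} and \eqref{eq:qbranch} at $q=0$. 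The key simplification is that $f(u) = (tu;q)_\infty/(qu;q)_\infty$ becomes $f(u) = 1 - tu$ at $q=0$, so for any nonnegative integer $a$, $f(t^{j-i} q^a)\big|_{q=0}$ equals $1-t^{j-i+1}$ when $a = 0$ and $1$ otherwise. Applied to \eqref{eq:pbranch} and \eqref{eq:qbranch}, this reduces each to a product $\prod_{k \geq 1} (1 - t^k)^{e_k}$, where $e_k$ is the signed count (positive for numerator factors, negative for denominator factors) of pairs $(i,j)$ with $j-i+1 = k$ for which one of the relevant differences ($\mu_i - \mu_j$, $\l_i - \l_{j+1}$, $\l_i - \mu_j$, or $\mu_i - \l_{j+1}$ for $\psi$; the analogous differences involving $\nu$ for $\varphi$) vanishes. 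By weak monotonicity, such a vanishing is equivalent to the relevant entries of $\l$, $\mu$, or $\nu$ lying on a common constant plateau.

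The remaining combinatorial step is to show that in the $\psi$ case $e_k$ equals the number of integers $c$ with $m_c(\mu) = m_c(\l) + 1$ and $m_c(\mu) = k$, and analogously for $\varphi$. I would argue this value by value. For each $c$, let $m = m_c(\mu)$ and $M = m_c(\l)$; the positions where $\mu$ (resp.\ $\l$) takes the value $c$ form a contiguous block of length $m$ (resp.\ $M$), and the interlacing $\mu \prec_P \l$ forces $|m - M| \leq 1$ together with a tightly constrained relative alignment of these two blocks. All four equalities in \eqref{eq:pbranch} that involve the value $c$ select only pairs $(i, j)$ whose indices lie inside these blocks, and a case split on $m - M \in \{-1, 0, 1\}$ shows that the numerator and denominator contributions cancel completely except when $m - M = 1$, in which case exactly one net factor $(1 - t^m)$ survives. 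Summing over $c$ yields $P_{\l/\mu}(x; 0, t) = x^{|\l| - |\mu|} \prod_{c: m_c(\mu) = m_c(\l) + 1} (1 - t^{m_c(\mu)})$.

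The computation for $\varphi_{\l/\nu}$ follows the same template, with $\l$ and $\nu$ now both of length $n$ and the four vanishing conditions in \eqref{eq:qbranch} essentially symmetric under the swap $\l \leftrightarrow \nu$; the analogous case analysis leaves behind the net factor $(1 - t^{m_c(\l)})$ exactly when $m_c(\l) = m_c(\nu) + 1$. The main obstacle is the cancellation bookkeeping: while conceptually transparent, it requires careful matching of the pairs $(i,j)$ contributing to numerator and denominator factors of common size. This is essentially the classical derivation of the Hall--Littlewood branching coefficient, of which our statement is the multiplicity-based form.
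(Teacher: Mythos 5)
Your approach is exactly what the paper has in mind when it says the formulas are \emph{``easily derived from \Cref{def:skew_signature_functions} by setting $q=0$''}: a single GT pattern in the one-variable case, the reduction $f(t^{j-i}q^a)\big|_{q=0}=1-t^{j-i+1}$ when $a=0$ and $1$ otherwise, and a value-by-value cancellation count. The combinatorics you sketch is correct — one may check that for each value $c$ with $m_c(\l)=M$, $m_c(\mu)=m$, interlacing forces $m\in\{M-1,M,M+1\}$, and in each of the three cases the numerator/denominator factors of type $(1-t^k)$ pair off exactly, with a single unmatched numerator factor $(1-t^{M+1})$ surviving precisely when $m=M+1$; so the sketch can be filled in as claimed, and the argument is sound.
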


Let $n(\l) := \sum_{i=1}^n (i-1)\l_i$. The following formulas may be easily derived from \Cref{prop:hl_explicit_formulas} and \eqref{eq:Qs_agree}. 

\begin{prop}[Principal specialization formulas]\label{prop:hl_principal_formulas}
For $J,n \geq 1$ and $\l \in \Sig_n^+$,
\begin{align*}
    P_\l(x,xt,\ldots,xt^{n-1};0,t) &= x^{|\l|} t^{n(\l)} \frac{(t;t)_n}{\prod_{i \in \Z} (t;t)_{m_i(\l)}} \\
    Q_{\l/(0[n])}(x,xt,\ldots,xt^{J-1};0,t) &= x^{|\l|} t^{n(\l)} \frac{(t;t)_J}{(t;t)_{m_0(\l)+J-n}} \bbone(m_0(\l)+J-n \geq 0)
\end{align*}
\end{prop}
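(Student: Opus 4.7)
The plan is to derive both formulas directly from the explicit symmetrization formula for $P_\l$ in \Cref{prop:hl_explicit_formulas}. Since $P_\l(\bx;0,t)$ is homogeneous of degree $|\l|$, we have $P_\l(x,xt,\ldots,xt^{n-1};0,t) = x^{|\l|}P_\l(1,t,\ldots,t^{n-1};0,t)$, so it suffices to evaluate at $x=1$ and restore $x^{|\l|}$ at the end.

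Substituting $x_i = t^{i-1}$ into
\[
P_\l(\bx;0,t) = \frac{1}{v_\l(t)}\sum_{\sigma \in S_n}\sigma\!\left(\bx^\l \prod_{1 \leq i<j \leq n} \frac{x_i-tx_j}{x_i-x_j}\right),
\]
the $\sigma$-summand contains the factor $\prod_{i<j}(t^{\sigma(i)-1} - t^{\sigma(j)})$ in its numerator, and this vanishes whenever some pair $i<j$ satisfies $\sigma(j) = \sigma(i)-1$. Thus a nonzero summand requires that no pair $i<j$ has $\sigma(i) = \sigma(j)+1$; equivalently, for each $k \in \{1,\ldots,n-1\}$ the value $k$ occurs in $\sigma$ before $k+1$. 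Iterating this condition forces the position of $k$ to equal $k$ for every $k$, so only $\sigma = \mathrm{id}$ survives. The denominator $\prod_{i<j}(t^{\sigma(i)-1} - t^{\sigma(j)-1})$ is nonzero since $\sigma$ is a permutation, so there is no cancellation to worry about.

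For the identity term, $\bx^\l|_{x_i = t^{i-1}} = t^{n(\l)}$, and factoring out the smaller power of $t$ from each difference yields
\[
\prod_{i<j}\frac{t^{i-1}-t^j}{t^{i-1}-t^{j-1}} = \prod_{i<j}\frac{1-t^{j-i+1}}{1-t^{j-i}}.
\]
Grouping the factors by $k = j-i$ (each value appearing $n-k$ times) and simplifying telescopically collapses this product to $(t;t)_n/(1-t)^n$. Since $\sum_i m_i(\l) = n$, the normalization is $1/v_\l(t) = (1-t)^n/\prod_i (t;t)_{m_i(\l)}$, and combining all factors produces the stated formula for $P_\l$. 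The $Q_\l$-formula follows immediately from $Q_\l = \bigl(\prod_{i \geq 1}(t;t)_{m_i(\l)}\bigr) P_\l$, which cancels every factor in the denominator of the $P$-formula except the $i=0$ one, giving the $(t;t)_n/(t;t)_{m_0(\l)}$ shape. The only genuinely nontrivial input is the vanishing argument pinning the sum to $\sigma = \mathrm{id}$; the rest is routine bookkeeping.
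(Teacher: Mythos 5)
Your proof is correct and is the standard derivation via the symmetrization formula; since the paper merely says the formulas ``may be easily derived from \Cref{prop:hl_explicit_formulas}'' without giving details, your argument supplies exactly the intended route: only $\sigma=\mathrm{id}$ survives the principal specialization because the numerator factor $t^{\sigma(i)-1}-t^{\sigma(j)}$ forces $\sigma^{-1}$ to be increasing, and the resulting product telescopes to $(t;t)_n/(1-t)^n$. One caveat worth flagging: for the $Q$-formula you invoked $Q_\l = \bigl(\prod_{i\geq 1}(t;t)_{m_i(\l)}\bigr)P_\l$, but \Cref{prop:hl_explicit_formulas} as printed has $\prod_{i\in\Z}$, which would also include the $(t;t)_{m_0(\l)}$ factor and would instead yield $Q_\l(x,\ldots,xt^{n-1}) = x^{|\l|}t^{n(\l)}(t;t)_n$, contradicting the present proposition (e.g.\ it would give $Q_{(0)}=1-t$ rather than $1$ for $n=1$). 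The relation with $\prod_{i\geq 1}$ is the standard one (see \cite[Ch.\ III]{mac}), so your version is right and the earlier proposition in the paper has a typo; you may want to note this explicitly so the reader does not think you misquoted it.
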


Note that the principal specialization formula for $Q$ differs from the statement in \cite[Ch. III.2, Ex. 1]{mac} due to our conventions on signatures, but it may be derived directly from that statement using \eqref{eq:Qs_agree} to translate between skew and non-skew $Q$ polynomials. There are also nice explicit formulas for principally specialized Macdonald polynomials, see \cite[Ch. VI]{mac}, but we will not need these except briefly in \Cref{sec:appendixA}.

From now on, we will typically omit the `$;q,t$' in the arguments of Macdonald or Hall-Littlewood polynomials when they are clear from context.

\section{{$p$}-adic random matrices and Hall-Littlewood processes}\label{sec:3}

In this section, we give the necessary background on $p$-adic random matrices. We state (but do not yet prove) two results on limits of Macdonald structure coefficients $c_{\l,\mu}^\nu(q,t)$, \Cref{prop:measure_convergence_jacobi_and_cauchy} and \Cref{prop:measure_convergence_branching_corners}. Using these, we deduce the main structural result on $p$-adic random matrices, \Cref{thm:exact_hl_results_in_p-adic_rmt}, and the `Ginibre ensemble' limit \Cref{cor:reprove_wood}. 

\subsection{$p$-adic random matrix background and proof of {\Cref{thm:exact_hl_results_in_p-adic_rmt}}.} 
The following is a condensed version of the exposition in \cite[Section 2]{evans2002elementary}, to which we refer any reader desiring a more detailed introduction to $p$-adic numbers. Fix a prime $p$. Any nonzero rational number $r \in \Q^\times$ may be written as $r=p^k (a/b)$ with $k \in \Z$ and $a,b$ coprime to $p$. Define $|\cdot|: \Q \to \R$ by setting $|r|_p = p^{-k}$ for $r$ as before, and $|0|_p=0$. Then $|\cdot|_p$ defines a norm on $\Q$ and $d_p(x,y) :=|x-y|_p$ defines a metric. We define the \emph{field of $p$-adic numbers} $\Q_p$ to be the completion of $\Q$ with respect to this metric, and the \emph{$p$-adic integers} $\Z_p$ to be the unit ball $\{x \in \Q_p : |x|_p \leq 1\}$. It is not hard to check that $\Z_p$ is a subring of $\Q_p$. We remark that $\Z_p$ may be alternatively defined as the inverse limit of the system $\ldots \to \Z/p^{n+1}\Z \to \Z/p^n \Z \to \cdots \to \Z/p\Z \to 0$, and that $\Z$ naturally includes into $\Z_p$. 

$\Q_p$ is noncompact but is equipped with a left- and right-invariant (additive) Haar measure; this measure is unique if we normalize so that the compact subgroup $\Z_p$ has measure $1$. The restriction of this measure to $\Z_p$ is the unique Haar probability measure on $\Z_p$, and is explicitly characterized by the fact that its pushforward under any map $r_n:\Z_p \to \Z/p^n\Z$ is the uniform probability measure. For concreteness, it is often useful to view elements of $\Z_p$ as `power series in $p$' $a_0 + a_1 p + a_2 p^2 + \ldots$, with $a_i \in \{0,\ldots,p-1\}$; clearly these specify a coherent sequence of elements of $\Z/p^n\Z$ for each $n$. The Haar probability measure then has the alternate explicit description that each $a_i$ is iid uniformly random from $\{0,\ldots,p-1\}$. Additionally, $\Q_p$ is isomorphic to the ring of Laurent series in $p$, defined in exactly the same way.

Similarly, $\GL_N(\Q_p)$ has a unique left- and right-invariant measure for which the total mass of the compact subgroup $\GL_N(\Z_p)$ is $1$. We denote this measure by $\M$. The restriction of $\M$ to $\GL_N(\Z_p)$ pushes forward to $\GL_N(\Z/p^n\Z)$; these measures are the uniform measures on the finite groups $\GL_N(\Z/p^n\Z)$. This gives an alternative characterization of the measure.

The following standard result is sometimes known as Smith normal form and holds also for more general rings.

\begin{prop}\label{prop:smith}
Let $n \leq m$. For any $A \in M_{n \times m}(\Q_p)$, there exist $U \in \GL_n(\Z_p), V \in \GL_m(\Z_p)$ such that $UAV = \diag_{n \times m}(p^{\l_1},\ldots,p^{\l_n})$ for some signature $\l$, where when $A$ is singular we formally allow parts of $\l$ to equal $\infty$ and define $p^\infty = 0$. Furthermore, there is a unique such signature $\l$ with possibly infinite parts.
\end{prop}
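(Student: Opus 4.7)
The plan is to invoke the standard Smith normal form algorithm for matrices over the discrete valuation ring $\Z_p$, reducing from $\Q_p$ by scaling.

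First, I would reduce to the integer case. Since each entry of $A$ has finite $p$-adic valuation, choosing $K \in \Z_{\geq 0}$ large enough we may write $A = p^{-K} A'$ with $A' \in M_{n \times m}(\Z_p)$. Any Smith decomposition $UA'V = \diag_{n \times m}(p^{\mu_1},\ldots,p^{\mu_n})$ over $\Z_p$ gives $UAV = \diag_{n \times m}(p^{\mu_1-K},\ldots,p^{\mu_n-K})$, and a final permutation of rows and columns (a signed permutation matrix, hence in $\GL(\Z_p)$) puts the diagonal exponents into the desired descending order.

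For existence over $\Z_p$, I would run the classical pivoting algorithm on $A'$. If $A' = 0$, all $\l_i = \infty$ and we are done. Otherwise let $v$ be the minimum $p$-adic valuation among the entries of $A'$, attained at some position $(i_0,j_0)$. Apply row and column permutations to move this pivot to position $(1,1)$. For each $i > 1$, the quotient $a_{i,1}/a_{1,1}$ has nonnegative valuation and so lies in $\Z_p$; subtracting that multiple of row $1$ from row $i$ is a left-multiplication by an element of $\GL_n(\Z_p)$ and clears the first column below the pivot. The analogous column operations clear the first row. By the ultrametric inequality, every entry of the resulting lower-right $(n-1)\times(m-1)$ block is a $\Z_p$-linear combination of entries of valuation $\geq v$ and hence has valuation $\geq v$. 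Recursing on this block produces in $n$ steps a diagonal form whose valuations are nondecreasing, with infinite valuations appearing naturally once the residual block becomes zero.

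For uniqueness, I would appeal to determinantal divisors. Define $d_k(A) \in \Z \cup \{\infty\}$ as the minimum $p$-adic valuation of a $k \times k$ minor of $A$. By the Cauchy--Binet formula a minor of $UAV$ is a $\Z_p$-linear combination of minors of $A$ weighted by minors of $U$ and $V$, which have valuation $\geq 0$ because $\det U, \det V \in \Z_p^\times$; applying the same argument to $U^{-1}, V^{-1}$ shows equality, so $d_k$ is invariant under the $\GL_n(\Z_p) \times \GL_m(\Z_p)$ action. Applied to the diagonal form, this gives $d_k(A) = \l_n + \l_{n-1} + \cdots + \l_{n-k+1}$, the sum of the $k$ smallest parts of $\l$, and these partial sums recover each $\l_i$ in turn.

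The main point to verify carefully, really a sanity check rather than an obstacle, is that the lower-right block after each clearing step retains all valuations $\geq v$: this is what ensures the recursion is well-defined and that the diagonal is produced in valuation order. Everything else is the classical PID argument transferred verbatim to the DVR $\Z_p$, with the convention $p^\infty = 0$ handling the singular case seamlessly.
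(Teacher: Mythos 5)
Your proof is correct. Note that the paper itself gives no proof of \Cref{prop:smith}: it labels it a ``standard result'' (Smith normal form over a DVR) and moves directly to \Cref{def:singular_numbers}, so there is no argument to compare against. What you wrote is the standard textbook argument, and the two places one should be careful -- that the cleared-out lower-right block retains valuations $\geq v$ by the ultrametric inequality (so the recursion produces nondecreasing valuations in $\Z_p$), and that determinantal divisors $d_k$ are preserved under $\GL(\Z_p)$-multiplication in both directions by also invoking $U^{-1},V^{-1}$ -- are both handled correctly, including the degenerate case where some $\l_i = \infty$.
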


\begin{defi}\label{def:singular_numbers}
We denote by $\Sig_n^*$ the set of \emph{extended signatures} $\l = (\l_1,\ldots,\l_n)$ where $\l_i \in \Z \cup \{\infty\}$, $\l_i \geq \l_{i+1}$ for all $i$, and we take $\infty > k$ for all $k \in \Z$. For any $n \leq m$ and $A \in M_{n \times m}(\Q_p)$, we let $\SN(A) \in \Sig_n^*$ denote the extended signature of \Cref{prop:smith}, which we refer to as the \emph{singular numbers} of $A$. Note the convention that the length of $\l$ is the smaller dimension of $A$.
\end{defi}

We will often write $\diag_{n \times N}(p^\l)$ for $\diag_{n \times N}(p^{\l_1},\ldots,p^{\l_n})$, and also omit the dimensions $n \times N$ when they are clear from context. We note also that for any $\l \in \Sig_N$, the double coset $\GL_N(\Z_p) \diag(p^\l) \GL_N(\Z_p)$ is compact. The restriction of $\M$ to such a double coset, normalized to be a probability measure, is the unique $\GL_N(\Z_p) \times \GL_N(\Z_p)$-invariant probability measure on $\GL_N(\Q_p)$ with singular numbers $\l$, and all $\GL_N(\Z_p) \times \GL_N(\Z_p)$-probability measures and convex combinations of these for different $\l$. These measures may be equivalently described as $U \diag(p^{\l_1},\ldots,p^{\l_N}) V$ where $U,V$ are independently distributed by the Haar probability measure on $\GL_N(\Z_p)$. More generally, if $n \leq m$ and $U \in \GL_n(\Z_p), V \in \GL_m(\Z_p)$ are Haar distributed and $\mu \in \Sig^*_n$, then $U \diag_{n \times m}(p^\mu) V$ is invariant under $\GL_n(\Z_p) \times \GL_m(\Z_p)$ acting on the left and right, and is the unique such bi-invariant measure with singular numbers given by $\mu$. These bi-invariant measures on rectangular matrices are the ones which appear in \Cref{thm:exact_hl_results_in_p-adic_rmt}.

\exactresults*

Let us flesh out the discussion from the Introduction on how \Cref{thm:exact_hl_results_in_p-adic_rmt} is proven. For Part 3, the results are essentially already contained in \cite[Ch. V]{mac} and must be translated to probabilistic language. Parts 1 and 2 both concern the operation of taking submatrices of a random matrix, which is equivalent to the multiplicative convolution of Part 3 with projection matrices. There is a slight difficulty because Part 3 is a statement about the pair $(\GL_n(\Q_p),\GL_n(\Z_p))$ and hence holds only for nonsingular matrices $A,B$, so one must make a limiting argument with nonsingular matrices which are very close to projection matrices, e.g. $\diag(1[N-k], p^D[k])$ for large $D$--recall that in the $p$-adic norm, $p^D$ for large $D$ is very small. Since Part 3 relates matrix products to the structure coefficients $c_{\l,\mu}^\nu(0,t)$, to implement the above idea of matrices which limit to projectors we must understand asymptotics of $c_{\l,\mu}^\nu(0,t)$ for sequences of $\l,\mu$ which approach the singular numbers of projection matrices. We will first state the relevant asymptotic results on Macdonald structure coefficients, \Cref{prop:measure_convergence_jacobi_and_cauchy} and \Cref{prop:measure_convergence_branching_corners}, then prove \Cref{thm:exact_hl_results_in_p-adic_rmt} conditional on these. This illustrates why these are the right asymptotic results on structure coefficients for our setting, which may not be apparent from the first glance. In the next subsection we will then develop the machinery to establish \Cref{prop:measure_convergence_jacobi_and_cauchy} and \Cref{prop:measure_convergence_branching_corners}.

\begin{prop}\label{prop:measure_convergence_jacobi_and_cauchy}
Let $q,t \in (-1,1)$ be such that the structure coefficients $c_{\l,\mu}^\nu(q,t)$ are all nonnegative\footnote{Conjecturally, this is true if $q,t \in [0,1)$ or if $q,t \in (-1,0]$, see Matveev \cite{matveev2019macdonald}. For our application we will only need the case $q=0, t = 1/p \in (0,1)$, for which the nonnegativity follows from the interpretation of the structure coefficients in terms of the Hall algebra \cite[Ch. III]{mac}, or alternatively from \Cref{thm:exact_hl_results_in_p-adic_rmt} Part 3.}. Let $n \leq m \leq N$ be integers such that $n \leq N-m$, let $\l \in \Sig_n$, and let $a_1 \geq a_2 \geq \ldots \geq a_N > 0$ be real numbers and $\ba = (a_1,\ldots,a_N)$. Let $\mcj$ be the probability measure on $\Sig_n$ with distribution defined by taking the last $n$ parts of a random signature $\kappa=(D[N-n],\l) \boxtimes_{\ba} (D[N-m],0[m])$, or explicitly,
\begin{equation*}
    \mcj(\nu) = \sum_{\substack{\kappa \in \Sig_N \\ \kappa_{N-n+i}=\nu_i \text{ for all }i=1,\ldots,n}} c_{(D[N-n],\l),(D[N-m],0[m])}^\kappa(q,t) \frac{P_\kappa(\ba)}{P_{(D[N-n],\l)}(\ba)P_{(D[N-m],0[m])}(\ba)}.
\end{equation*}
Then for each $\nu \in \Sig_n$,
\begin{equation}\label{eq:limit_of_cauchy_jacobi_measure}
    \mcj(\nu) \to \frac{Q_{\nu/\l}(a_1^{-1},\ldots,a_{N-m}^{-1})P_\nu(a_{N-n+1},\ldots,a_N)}{P_\l(a_{N-n+1},\ldots,a_N) \Pi(a_1^{-1},\ldots,a_{N-m}^{-1};a_{N-n+1},\ldots,a_N)}
\end{equation}
as $D \to \infty$.
\end{prop}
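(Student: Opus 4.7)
The plan is to apply \Cref{thm:explicit_low_deg_terms} three times: to the input signatures $\mu^{(1)}(D) := (D[N-n], \l)$ and $\mu^{(2)}(D) := (D[N-m], 0[m])$, and to the candidate output signatures $\kappa$ whose last $n$ parts are $\nu$. Partition the variables as $\bx = (\bx_T, \bx_M, \bx_B)$ with $\bx_T := (x_1,\ldots,x_{N-m})$, $\bx_M := (x_{N-m+1},\ldots,x_{N-n})$, $\bx_B := (x_{N-n+1},\ldots,x_N)$, and decompose $\ba$ analogously. Applying \Cref{thm:explicit_low_deg_terms} with two blocks to $\mu^{(1)}(D)$ and $\mu^{(2)}(D)$ gives
\begin{align*}
    \frac{P_{\mu^{(1)}(D)}(\bx)}{(x_1 \cdots x_{N-n})^D} &\to P_\l(\bx_B)\Pi(\bx_T^{-1},\bx_M^{-1};\bx_B), \\
    \frac{P_{\mu^{(2)}(D)}(\bx)}{(x_1 \cdots x_{N-m})^D} &\to \Pi(\bx_T^{-1};\bx_M,\bx_B).
\end{align*}
I would then parametrize candidate $\kappa$'s as $\kappa(D;\alpha,\beta,\nu) := (2D[N-m]+\alpha,\, D[m-n]+\beta,\, \nu)$ for $\alpha \in \Sig_{N-m}$, $\beta \in \Sig_{m-n}$; these capture all $\kappa$'s whose leading scaling in $D$ matches the product. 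Applying \Cref{thm:explicit_low_deg_terms} with three blocks of levels $(2,1,0)$ yields the analogous factorization for $P_{\kappa(D;\alpha,\beta,\nu)}(\bx)$.

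Combining these asymptotics, the contribution of each $\kappa(D;\alpha,\beta,\nu)$ to $\mcj(\nu)$ converges to
\begin{equation*}
    \tilde c(\alpha,\beta,\nu) \cdot \frac{P_\alpha(\ba_T) P_\beta(\ba_M) P_\nu(\ba_B)}{P_\l(\ba_B)\Pi(\ba_T^{-1};\ba_B)}, \quad \tilde c(\alpha,\beta,\nu) := \lim_D c_{\mu^{(1)}(D),\mu^{(2)}(D)}^{\kappa(D;\alpha,\beta,\nu)}(q,t).
\end{equation*}
Since only finitely many $\kappa$'s contribute to the coefficient of any fixed Laurent monomial in $\bx$ at large $D$, and these are all of the form $\kappa(D;\alpha,\beta,\nu)$, the identity $P_{\mu^{(1)}(D)}P_{\mu^{(2)}(D)} = \sum_\kappa c^\kappa P_\kappa$ passes to the limit coefficient-wise to give
\begin{equation*}
    \sum_{\alpha,\beta,\nu}\tilde c(\alpha,\beta,\nu) P_\alpha(\bx_T)P_\beta(\bx_M)P_\nu(\bx_B) = P_\l(\bx_B)\Pi(\bx_T^{-1};\bx_B) = \sum_\nu P_\nu(\bx_B)Q_{\nu/\l}(\bx_T^{-1}),
\end{equation*}
where the second equality is the identity $P_\l(\bx)\Pi(\bx;\by) = \sum_\nu P_\nu(\bx) Q_{\nu/\l}(\by)$, obtained from the Cauchy expansion of $\Pi$ and \Cref{prop:q_coproduct_coefs}. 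Matching coefficients in the linearly independent bases $\{P_\nu(\bx_B)\}$ and $\{P_\beta(\bx_M)\}$ forces $\tilde c(\alpha,\beta,\nu) = 0$ unless $\beta = 0[m-n]$, and $\sum_\alpha \tilde c(\alpha,0,\nu) P_\alpha(\bx_T) = Q_{\nu/\l}(\bx_T^{-1})$.

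By Fatou's lemma applied to the nonnegative contributions,
\begin{equation*}
    \liminf_{D \to \infty} \mcj(\nu) \geq \frac{P_\nu(\ba_B)\, Q_{\nu/\l}(\ba_T^{-1})}{P_\l(\ba_B)\Pi(\ba_T^{-1};\ba_B)} = J_\l(\nu),
\end{equation*}
the claimed right-hand side of \eqref{eq:limit_of_cauchy_jacobi_measure}. Specializing the symmetric function identity above at $\bx_B = \ba_B$, $\bx_T^{-1} = \ba_T^{-1}$ shows $\sum_\nu J_\l(\nu) = 1$; combined with $\sum_\nu \mcj(\nu) = 1$ for every $D$, the pointwise liminf lower bound forces $\mcj(\nu) \to J_\l(\nu)$ pointwise via a standard subsequence-and-mass-conservation argument. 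The main obstacle is the coefficient-matching step: since \Cref{thm:explicit_low_deg_terms} yields only monomial-wise convergence in the variables, one must carefully verify both the existence of the limits $\tilde c(\alpha,\beta,\nu)$ and the formal symmetric-function identity above by restricting, for each Laurent monomial, to the finite set of $\kappa$'s that contribute at large $D$ and then inverting the unique expansion in the basis $\{P_\alpha(\bx_T)P_\beta(\bx_M)P_\nu(\bx_B)\}$.
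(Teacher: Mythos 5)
Your proposal follows the same route the paper takes: factorize $P_{\mu^{(1)}(D)}$, $P_{\mu^{(2)}(D)}$, $P_{\kappa(D;\alpha,\beta,\nu)}$ via \Cref{thm:explicit_low_deg_terms}, identify the structure-coefficient limits by matching coefficients in the factored product expansion, use nonnegativity to get a $\liminf$ lower bound at each $\nu$, and close by mass conservation. This is exactly the paper's argument (compare \Cref{prop:bet2}, \Cref{prop:spec_limits}, and the short proof of \Cref{prop:measure_convergence_jacobi_and_cauchy} itself); your Fatou framing and the paper's ``take any finite subsum and use positivity'' are the same maneuver.

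One gap remains beyond the one you flag. You correctly identify the delicacy in coefficient-matching: \Cref{thm:explicit_low_deg_terms} only gives monomial-wise convergence of the normalized $P_{\lambda(D)}(\bx)/\bx^{\hl(D)}$ as formal Laurent series, so establishing that the limits $\tilde c(\alpha,\beta,\nu)$ exist and vanish unless $\beta = 0[m-n]$ requires the finite-projection and dominance-order machinery that the paper develops in \Cref{lem:projections_suffice} and \Cref{prop:bet2}. But you then pass silently from those formal-variable factorizations to the assertion that the \emph{real-specialized} ratio
\begin{equation*}
\frac{P_{\kappa(D;\alpha,\beta,\nu)}(\ba)}{P_{\mu^{(1)}(D)}(\ba)\,P_{\mu^{(2)}(D)}(\ba)}
\end{equation*}
converges to the corresponding ratio of evaluated power series. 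This is not a formal consequence of monomial-wise convergence: as $D \to \infty$ each normalized polynomial $P_{\lambda(D)}(\bx)/\bx^{\hl(D)}$ acquires an unbounded number of monomials, while the claimed limit is a genuine infinite series $\Pi(\cdots)$ in the $a_i$'s, so coefficient-wise convergence alone does not give convergence of the specialized values without uniform tail control. The paper isolates this as \Cref{prop:spec_limits}, and its proof is not automatic: it uniformly bounds the branching weights $\psi(T)$ by a constant $C$, uses the monotone stabilization in the Schur case $q=t$ (\Cref{rmk:schur_hl_stabilization}) as a dominating majorant, and then transfers the estimate from $q=t$ to general $q,t$. Without this (or an equivalent domination argument), the ``combining these asymptotics'' step in your proposal, and hence the input to your Fatou bound, is unsupported.
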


\begin{prop}\label{prop:measure_convergence_branching_corners}
Let $q,t \in (-1,1)$ be such that the structure coefficients $c_{\l,\mu}^\nu(q,t)$ are all nonnegative. Let $0 < k \leq n$ be integers, let $\l \in \Sig_n$, and let $a_1 \geq a_2 \geq \ldots \geq a_n > 0$ be real numbers and $\ba = (a_1,\ldots,a_n)$. Let $\mbr$ be the probability measure on $\Sig_{n-k}$ with distribution defined by taking the last $n-k$ parts of a random signature $\kappa=\l \boxtimes_{\ba} (D[k],0[n-k])$, or explicitly,
\begin{equation*}
    \mbr(\mu) = \sum_{\substack{\kappa \in \Sig_n \\ \kappa_{k+i}=\mu_i \text{ for all }i=1,\ldots,n-k}} c_{\l,(D[k],0[n-k])}^\kappa(q,t) \frac{P_\kappa(\ba;q,t)}{P_{\l}(\ba;q,t)P_{(D[k],0[n-k])}(\ba;q,t)}.
\end{equation*}
Then for each $\mu \in \Sig_{n-k}$,
\begin{equation*}
    \mbr(\mu) \to P_{\l/\mu}(a_1,\ldots,a_k) \frac{P_\mu(a_{k+1},\ldots,a_n)}{P_\l(\ba)} \text{     as }D \to \infty.
\end{equation*}
\end{prop}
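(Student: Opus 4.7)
The strategy is to extract the large-$D$ behavior of both the Macdonald values $P_\kappa(\ba)/P_{(D[k],0[n-k])}(\ba)$ and of the structure coefficients $c_{\l,(D[k],0[n-k])}^\kappa(q,t)$ from the asymptotic factorization in \Cref{thm:explicit_low_deg_terms}, and then combine this with the branching rule for $P_\l$ and the probability-measure property of $\mbr$ to upgrade termwise convergence into pointwise convergence of the whole measure.

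First I would reindex the sum defining $\mbr(\mu)$: for $D$ large, the degree constraint $|\kappa|=|\l|+kD$ combined with $\kappa_{k+i}=\mu_i$ forces every summand to come from $\kappa=(\tilde\kappa+D[k],\mu)$ with $\tilde\kappa\in\Sig_k$ satisfying $|\tilde\kappa|=|\l|-|\mu|$. Applying \Cref{thm:explicit_low_deg_terms} with two blocks of sizes $k$ and $n-k$, $L_1=1,L_2=0$, and interior signatures $(\tilde\kappa,\mu)$ in one case and empty in the other, both $P_{(\tilde\kappa+D[k],\mu)}(\ba)$ and $P_{(D[k],0[n-k])}(\ba)$ pick up the common leading factor $(a_1\cdots a_k)^D\,\Pi_{(q,t)}(a_1^{-1},\ldots,a_k^{-1};a_{k+1},\ldots,a_n)$ (the infinite product being finite since $a_j/a_i\le 1$ for $j>k, i\le k$), giving
\[
\frac{P_{(\tilde\kappa+D[k],\mu)}(\ba)}{P_{(D[k],0[n-k])}(\ba)}\ \longrightarrow\ P_{\tilde\kappa}(a_1,\ldots,a_k)\,P_\mu(a_{k+1},\ldots,a_n).
\]

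To identify the limit of the structure coefficients I would apply the same normalization by $(x_1\cdots x_k)^{-D}$ to the polynomial identity $P_\l(\bx)P_{(D[k],0[n-k])}(\bx)=\sum_\kappa c_{\l,(D[k],0[n-k])}^\kappa P_\kappa(\bx)$ in the symmetric Laurent polynomial ring and take $D\to\infty$ coefficient-wise in $\bx$. The LHS tends to $P_\l(\bx)\,\Pi_{(q,t)}(x_1^{-1},\ldots,x_k^{-1};x_{k+1},\ldots,x_n)$ while each summand on the RHS tends to $P_{\tilde\kappa}(x_1,\ldots,x_k)P_\mu(x_{k+1},\ldots,x_n)\,\Pi_{(q,t)}$. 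Cancelling the common $\Pi_{(q,t)}$ factor and comparing with the branching rule $P_\l(\bx)=\sum_\mu P_{\l/\mu}(x_1,\ldots,x_k)P_\mu(x_{k+1},\ldots,x_n)$, linear independence of $\{P_{\tilde\kappa}(x_1,\ldots,x_k)P_\mu(x_{k+1},\ldots,x_n)\}$ in $\L_k\otimes\L_{n-k}$ identifies $\lim_D c_{\l,(D[k],0[n-k])}^{(\tilde\kappa+D[k],\mu)}$ with the coefficient of $P_{\tilde\kappa}$ in the Macdonald expansion of $P_{\l/\mu}(x_1,\ldots,x_k)$.

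The main obstacle will be interchanging the limit $D\to\infty$ with the (a priori infinite) sum over $\tilde\kappa$ defining $\mbr(\mu)$. I would bypass this via Fatou's lemma combined with the sum-to-one property of probability measures. Nonnegativity of the structure coefficients (hypothesis) and Fatou's lemma give
\[
\liminf_{D\to\infty}\mbr(\mu)\ \geq\ \frac{P_{\l/\mu}(a_1,\ldots,a_k)\,P_\mu(a_{k+1},\ldots,a_n)}{P_\l(\ba)},
\]
and the right-hand side sums to $1$ over $\mu\in\Sig_{n-k}$ by the branching rule evaluated at $\ba$. Since $\mbr$ is a probability measure for each $D$, a second application of Fatou in the variable $\mu$ yields $\sum_\mu\liminf_D\mbr(\mu)\le 1$. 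Combined with the lower bound this forces equality for every $\mu$, and the matching bound $\limsup_D\mbr(\mu)\le 1-\sum_{\mu'\ne\mu}\liminf_D\mbr(\mu')$ upgrades the liminf equality to a genuine limit, yielding the claimed formula.
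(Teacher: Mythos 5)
Your proposal is correct and takes essentially the same route as the paper: reindex $\kappa=(\tilde\kappa+D[k],\mu)$, use \Cref{thm:explicit_low_deg_terms} to extract the limits of both the specialized Macdonald polynomials (the paper packages this as \Cref{prop:spec_limits}) and the structure coefficients (the paper packages the linear-independence step as \Cref{lem:projections_suffice}), and then upgrade termwise limits to convergence of the measure by nonnegativity plus the sum-to-one normalization. The only place where you elide something the paper makes explicit is the passage from coefficient-wise (formal) convergence to convergence of the values at $\ba$, which the paper justifies in \Cref{prop:spec_limits} by a uniform tail bound via comparison with the Schur case, not merely by observing that the limiting Cauchy product converges.
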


\begin{proof}[Proof of {\Cref{thm:exact_hl_results_in_p-adic_rmt}}, conditional on {\Cref{prop:measure_convergence_branching_corners}} and {\Cref{prop:measure_convergence_jacobi_and_cauchy}}.]
We first prove Part 3. In this proof we will use essentially the notation of \cite[Ch. V]{mac} to state the relevant results and then show how ours follow. Let $\G = \GL_n(\Q_p)$, $\K = \GL_n(\Z_p)$, and $L(\G,\K)$ denote the algebra of compactly supported functions $f:\G \to \C$ which are bi-invariant under $\K$, i.e. $f(k_1 x k_2) = f(x)$ for $x \in \G, k_1,k_2 \in \K$. Define a convolution operation on $L(\G,\K)$ by 
\begin{equation*}
    (f * g)(x) = \int_{\G} f(xy^{-1}) g(y) dy
\end{equation*}
where the integration is with respect to the Haar measure on $\G$ normalized such that $\K$ has measure $1$, mentioned earlier. This multiplication is associative, and may be checked to be commutative as well. By \Cref{prop:smith}, each double coset $\K x \K$ of $\K \backslash \G / \K$ has a unique representative of the form $\diag(p^\l)$ for some $\l \in \Sig_n$. We abuse notation slightly and write such a double coset as $\K p^\l \K$. We denote by $\bbone_\l$ the indicator function on such a double coset; clearly $\bbone_\l \in L(\G,\K)$. We will use the following two results, both of which may be found in the discussion after \cite[Ch. V, (2.7)]{mac}:
\begin{itemize}
    \item The map $\theta: L(\G,\K) \to \L_n[(x_1 \cdots x_n)^{-1}]$ given by $\theta(\bbone_\l) = t^{n(\l)}P_\l(x_1,\ldots,x_n;0,t)$ is a $\C$-algebra isomorphism, where $n(\l) := \sum_i (i-1) \l_i$. Equivalently,
    \begin{equation*}
        \bbone_\l * \bbone_\mu = \sum_{\nu \in \Sig_n} t^{n(\l)+n(\mu)-n(\nu)} c_{\l,\mu}^\nu(0,t) \bbone_\nu.
    \end{equation*}
    
    \item The measure of each double coset $\K p^\l \K$ is 
    \begin{equation*}
        \M(\K p^\l \K) = t^{n(\l)-(n-1)|\l|} P_\l(1,t,\ldots,t^{n-1};0,t).
    \end{equation*}
\end{itemize}
It follows directly that 
\begin{equation}\label{eq:follows_from_mac's_2}
    \frac{\int_{x \in \K p^\nu \K}(\bbone_\l * \bbone_\mu)(x) dx }{\M(\K p^\l \K) \M(\K p^\mu \K)} = c_{\l,\mu}^\nu(0,t) \frac{P_\nu(1,\ldots,t^{n-1};0,t)}{P_\l(1,\ldots,t^{n-1};0,t)P_\mu(1,\ldots,t^{n-1};0,t)},
\end{equation}
using the fact that either $(n-1)|\l| + (n-1)|\mu| = (n-1)|\nu|$ or the equality is trivial.

But the LHS of \eqref{eq:follows_from_mac's_2}, by the definition of the convolution product, is
\begin{equation*}
    \frac{1}{\M(\K p^\l \K) \M(\K p^\mu \K)} \int_{x,y \in \G} \bbone_\nu(x) \bbone_\l(xy^{-1}) \bbone_\mu(y) dx dy
\end{equation*} 
Setting $A = xy^{-1},B = y$, this is exactly the conditional probability $\Pr(\SN(AB) = \nu)$ as $A,B$ vary over $\K p^\l \K$ and $\K p^\mu \K$ respectively (both normalized to have total measure $1$), so Part 3 of \Cref{thm:exact_hl_results_in_p-adic_rmt} is proven.

Now consider Part 1. The distribution of the top $n$ rows of a Haar-distributed element of $\GL_N(\Z_p)$ is just the unique $\GL_n(\Z_p) \times \GL_N(\Z_p)$-invariant distribution on $M_{n \times N}(\Q_p)$ with singular numbers $\SN(A) = (0[n])$. Hence Part 1 is the special case of \eqref{eq:p-adic_cauchy_corners} when $\l = (0[n])$. Let us deduce \eqref{eq:p-adic_cauchy_corners} from \Cref{prop:measure_convergence_jacobi_and_cauchy}.

Fix $\l \in \Sig_n$. We wish to compute the distribution of the singular numbers $\SN(U \diag_{n \times N}(p^\l) V P_k)$, where $U \in \GL_n(\Z_p), V \in \GL_N(\Z_p)$ are Haar distributed and $P_k = \diag_{N \times N}(1[N-k],0[k])$ is a corank-$k$ projector. Consider a fixed, deterministic $V_0 \in \GL_N(\Z_p)$, and let $\nu = \SN(U \diag_{n \times N}(p^\l) V_0 P_k) \in \Sig_n$. First note that this is independent of $U$, and setting
\begin{equation*}
    A(V_0) := \diag_{N \times N}(p^\l,0[N-n]) V_0 P_k
\end{equation*}
we have $\SN(A(V_0)) = (\infty[N-n],\nu) \in \Sig_N^*$. For $D > \nu_1$ set
\begin{equation*}
    A_D(V_0) := \diag_{N \times N}(p^\l, p^D[N-n]) V_0 \diag_{N \times N}(1[N-k],p^D[k]).
\end{equation*}
We claim that $\SN(A_D)_{N-n+i} = \nu_i$ for each $i=1,\ldots,n$.

Let $r_D:M_N(\Z_p) \to M_N(\Z/p^D\Z)$ be the obvious map. \Cref{prop:smith} holds also for matrices over $\Z/p^D\Z$, so we may abuse notation and define $\SN$ on both $M_N(\Z_p)$ and $M_N(\Z/p^D\Z)$ as in \Cref{def:singular_numbers}. Let $\varphi_D: \Sig_N^* \to \Sig_N^*$ be the map defined as follows: for any $\kappa \in \Sig_N^*$, let $j = \max(\{i: \kappa_i \geq D\})$, and define $\varphi_D(\kappa) := (\infty[j],\kappa_{j+1},\ldots,\kappa_N)$. It is clear that the diagram

\begin{center}
\begin{tikzcd}
M_N(\Z_p) \arrow[r, "\SN"] \arrow[d, "r_D"]
& \Sig_N^* \arrow[d, "\varphi_D"] \\
M_N(\Z/p^D \Z) \arrow[r, "\SN"]
& \Sig_N^*
\end{tikzcd}
\end{center}

\noindent commutes, hence $\SN(A_D(V_0))_{N-n+i} = \nu_i$ for each $i=1,\ldots,n$, because $r_D(A_D(V_0)) = r_D(A(V_0))$ and $\nu_i < D$ for all $i$. Thus for any fixed $\nu$ and $D > \nu_1$, recalling that $V \in \GL_N(\Z_p)$ is Haar distributed, we have
\begin{equation*}
    \Pr(\SN(A_D(V))_{N-n+i} = \nu_i \text{ for }i=1,\ldots,n) = \Pr(\SN(A(V))_{N-n+i} = \nu_i \text{ for }i=1,\ldots,n).
\end{equation*}
This stabilization for $D > \nu_1$ in particular implies that 
\begin{equation*}
    \lim_{D \to \infty} \Pr(\SN(A_D(V))_{N-n+i} = \nu_i \text{ for }i=1,\ldots,n) = \Pr(\SN(A(V))_{N-n+i} = \nu_i \text{ for }i=1,\ldots,n).
\end{equation*}
The RHS is what we want to compute. By Part 3 of \Cref{thm:exact_hl_results_in_p-adic_rmt}, the LHS is equal to 
\begin{equation*}
    \lim_{D \to \infty} \sum_{\substack{\kappa \in \Sig_N \\ \kappa_{N-n+i}=\nu_i \text{ for all }i=1,\ldots,n}}  \frac{c_{(D[N-n],\l),(D[k],0[N-k])}^\kappa(0,t)P_\kappa(1,\ldots,t^{n-1};0,t)}{P_{(D[N-n],\l)}(1,\ldots,t^{n-1};0,t)P_{(D[k],0[N-k])}(1,\ldots,t^{n-1};0,t)}.
\end{equation*}
By \Cref{prop:measure_convergence_jacobi_and_cauchy} this is equal to
\begin{equation*}
    \frac{Q_{\nu/\l}(1,\ldots,t^{-(k-1)})P_\nu(t^{N-n},\ldots,t^{N-1})}{P_\l(t^{N-n},\ldots,t^{N-1}) \Pi(1,\ldots,t^{-(k-1)};t^{N-n},\ldots,t^{N-1})},
\end{equation*}
which is \eqref{eq:p-adic_cauchy_corners}. Setting $m=N-k$ and $\l = (0[n])$, and dividing all variables in both $P$ specializations by $t^{N-n}$ and multiplying those in the $Q$ specialization by $t^{N-n}$, yields \Cref{thm:exact_hl_results_in_p-adic_rmt} Part 1. 

It remains to prove the other case of Part 2, namely \eqref{eq:p-adic_corners}. One wishes to compute the distribution of $P_d U \diag_{n \times N}(p^\l) V$ where $P_d \in M_n(\Z_p)$ is a corank $d$ projector and $U,V$ are as above. We may ignore $V$, and by the same argument as before it suffices to consider the matrix $\diag_{n \times n}(0[n-d],p^D[d]) U \diag_{n \times n}(p^\l)$ for large $D$. One then applies \Cref{prop:measure_convergence_branching_corners} to yield \eqref{eq:p-adic_corners}.
\end{proof}

From \Cref{thm:exact_hl_results_in_p-adic_rmt} Part 1 we deduce the following. Recall that the intuition for this statement is that if $N$ is very large compared to $m,n$, then the entries of an $n \times m$ corner of a Haar distributed element of $\GL_N(\Z_p)$ become asymptotically iid from the additive Haar measure on $\Z_p$. The analogous statement holds in the complex case, namely that an $n \times m$ corner of a Haar distributed element of $U(N)$ becomes a matrix of iid Gaussians as $N \to \infty$ if one rescales appropriately.

\pwishart*

\begin{proof}
Let $A \in M_{n \times m}(\Z_p)$ be distributed as in \Cref{cor:reprove_wood}, and $B_N \in M_{n \times m}(\Z_p)$ be an $n \times m$ corner of a Haar distributed element of $\GL_N(\Z_p)$. By Part 1 of \Cref{thm:exact_hl_results_in_p-adic_rmt} one has 
\begin{equation*}
    \lim_{N \to \infty} \Pr(\SN(B_N) = \l) = \frac{P_\l(1,\ldots,t^{n-1})Q_\l(t^{m-n+1},t^{m-n+2},\ldots)}{\Pi_{(0,t)}(1,\ldots,t^{n-1};t^{m-n+1},t^{m-n+2},\ldots)},
\end{equation*}
hence it suffices to prove
\begin{equation*}
    \lim_{N \to \infty} \Pr(\SN(B_N) = \l) = \Pr(\SN(A) = \l).
\end{equation*}
It is clear that $A$ and $B_N$ are nonsingular with probability $1$, so $\SN(B_N)$ and $\SN(A)$ lie in $\Sig_n^+$ (rather than $\Sig_n^*$) with probability $1$. Letting $D > \l_1$ be an integer, we have by the argument in the proof of \Cref{thm:exact_hl_results_in_p-adic_rmt} that if $\SN(E) = \l$ then $\SN(r_D(E)) = \l$ as well for any fixed nonsingular $E \in M_{n \times m}(\Z_p)$, where $r_D$ is the reduction modulo $p^D$ map. Therefore $\Pr(\SN(B_N) = \l) = \Pr(\SN(r_D(B_N)) = \l)$ and similarly with $B_N$ replaced by $A$, so it suffices to prove
\begin{equation*}
    \lim_{N \to \infty} \Pr(\SN(r_D(B_N)  = \l) = \Pr(\SN(r_D(A)  = \l).
\end{equation*}
From the discussion of measures at the beginning of the section it follows that $r_D(A) $ has entries iid uniform over $\Z/p^D \Z$, and $r_D(B_N) $ has the distribution of an $n \times m$ corner of a uniformly random element of $\GL_N(\Z/p^D \Z)$. A uniformly random element $(a_{ij})_{1 \leq i,j \leq N} \in \GL_N(\Z/p^D \Z)$ may be sampled by first sampling a uniformly random element of $(a'_{ij})_{1 \leq i,j \leq N} \in \GL_N(\Z/p\Z)$, then choosing $a_{ij} \in \Z/p^D\Z$ independently, uniform in the congruence class of $a'_{ij}$. Thus it suffices to show that for any $C \in M_{n \times m}(\Z/p\Z)$, 
\begin{equation*}
    \lim_{N \to \infty} \Pr(r_1(B_N) = C) = \Pr(r_1(A) = C).
\end{equation*} 
For fixed $C$, 
\begin{equation*}
    \frac{(p^{N-m}-1)(p^{N-m}-p)\cdots(p^{N-m}-p^{n-1})}{p^{nN}} \leq \Pr(r_1(B_N)  = C)  \leq \frac{p^{n(N-m)}}{p^{nN}};
\end{equation*}
the lower bound is sharp when $C$ is the zero matrix, and the upper bound is sharp when $C$ is nonsingular. Both bounds come from counting the number of $n \times N$ nonsingular matrices over $\Z/p\Z$ with left $n \times m$ submatrix $C$, and dividing by $\# M_{n \times N}(\Z/p\Z)$. The upper bound is $p^{-nm}$ and the lower bound goes to $p^{-nm}$ as $N \to \infty$. Since $\Pr(r_1(A)= C) = p^{-nm}$ for any $C$, we are done.
\end{proof}

Note also that Parts 2, 3 of \Cref{thm:exact_hl_results_in_p-adic_rmt} (and the limiting case \Cref{cor:reprove_wood}) together with \Cref{prop:product_gives_proc} immediately imply that joint distributions of singular numbers of products of $p$-adic Haar corners are distributed according to Hall-Littlewood processes.

\begin{restatable}{cor}{hlprocrmt} \label{cor:product_gives_hl_proc}
Let $t=1/p$, fix $n \geq 1$ and let $N_1,N_2,\ldots \in \Z \cup \{\infty\}$ with with $N_i > n$ for all $i$. For each $i$, let $A_i$ be the top left $n \times n$ corner of a Haar distributed element of $\GL_{N_i}(\Z_p)$ if $N_i < \infty$, and let $A_i$ have iid entries distributed by the additive Haar measure on $\Z_p$ if $N_i = \infty$. Then for $\l^{(1)},\ldots,\l^{(k)} \in \Sig_n^+$,
\begin{multline*}
    \Pr(\SN(A_\tau \cdots A_1) = \l^{(\tau)}\text{ for all }\tau=1,\ldots,k) \\ = \frac{Q_{\l^{(1)}/(0[n])}(t,\ldots,t^{N_1-n}) Q_{\l^{(2)}/\l^{(1)}}(t,\ldots,t^{N_2-n}) \cdots Q_{\l^{(k)}/\l^{(k-1)}}(t,\ldots,t^{N_k-n}) P_{\l^{(k)}}(1,\ldots,t^{n-1})}{\Pi_{(0,t)}(1,\ldots,t^{n-1}; t,\ldots,t^{N_1-n},t,\ldots,t^{N_2-n},\ldots,t,\ldots,t^{N_k-n})}
\end{multline*}
\end{restatable}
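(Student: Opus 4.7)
The plan is to recognize $(\SN(A_\tau\cdots A_1))_{\tau=1}^k$ as the iterated $\boxtimes_{\mathbf{b}}$-convolution of independent Hall-Littlewood samples and then invoke \Cref{prop:product_gives_proc}. Set $\mathbf{b}:=(1,t,\ldots,t^{n-1})$, $\ba^{(\tau)}:=(t,t^2,\ldots,t^{N_\tau-n})$ (interpreted as the infinite sequence $(t,t^2,\ldots)$ when $N_\tau=\infty$), and write $B_\tau:=A_\tau\cdots A_1$.

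First, I would check that each $A_\tau$ has a $\GL_n(\Z_p)\times\GL_n(\Z_p)$-bi-invariant distribution. When $N_\tau<\infty$ this is immediate from invariance of Haar measure on $\GL_{N_\tau}(\Z_p)$ under the block-diagonal embedding $g\mapsto\diag(g,I_{N_\tau-n})$ of $\GL_n(\Z_p)$; when $N_\tau=\infty$ it follows from the $\GL_n(\Z_p)$-invariance of iid Haar vectors in $\Z_p^n$ recalled in \Cref{rmk:haar_gaussian}, applied columnwise and rowwise. Independence of the $A_\tau$ then propagates bi-invariance to every partial product $B_\tau$, so the conditional law of $B_\tau$ given $\SN(B_\tau)$ is the unique bi-invariant measure on the associated double coset. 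Next, by \Cref{thm:exact_hl_results_in_p-adic_rmt} Part 1 (for finite $N_\tau$) or \Cref{cor:reprove_wood} (for $N_\tau=\infty$), the marginal law of $\SN(A_\tau)$ is precisely the Hall-Littlewood measure with specializations $\ba^{(\tau)}$ and $\mathbf{b}$.

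The Markov step is the key: conditioning on $\SN(B_{\tau-1})=\l^{(\tau-1)}$ and $\SN(A_\tau)=\mu$, bi-invariance plus independence puts us exactly in the setup of \Cref{thm:exact_hl_results_in_p-adic_rmt} Part 3, giving
\begin{equation*}
\Pr\bigl(\SN(B_\tau)=\l^{(\tau)}\,\big|\,\SN(B_{\tau-1})=\l^{(\tau-1)},\,\SN(A_\tau)=\mu\bigr)=c^{\l^{(\tau)}}_{\l^{(\tau-1)},\mu}(0,t)\,\frac{P_{\l^{(\tau)}}(\mathbf{b})}{P_{\l^{(\tau-1)}}(\mathbf{b})\,P_\mu(\mathbf{b})}.
\end{equation*}
This is exactly the density of $\l^{(\tau-1)}\boxtimes_{\mathbf{b}}\mu$, so by induction on $\tau$ the joint law of $(\SN(B_\tau))_{\tau=1}^k$ coincides with that of $(\nu^{(1)}\boxtimes_{\mathbf{b}}\cdots\boxtimes_{\mathbf{b}}\nu^{(\tau)})_{\tau=1}^k$ for independent Hall-Littlewood samples $\nu^{(\tau)}$ with specializations $(\ba^{(\tau)},\mathbf{b})$. \Cref{prop:product_gives_proc} then delivers the stated Hall-Littlewood process formula, with the residual sum over $\mu$ absorbed into the skew $Q$ factors via \Cref{prop:q_coproduct_coefs}.

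The main obstacle is essentially careful bookkeeping: one must verify that after conditioning on $(\SN(B_{\tau-1}),\SN(A_\tau))$, the factors $B_{\tau-1}$ and $A_\tau$ remain independent bi-invariantly-distributed matrices with the prescribed singular numbers, so that Part 3 of \Cref{thm:exact_hl_results_in_p-adic_rmt} genuinely applies at every step. Once this is established, identifying the resulting discrete-time chain with the ascending Hall-Littlewood process is automatic from \Cref{prop:product_gives_proc}.
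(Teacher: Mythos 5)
Your proposal is correct and takes the same route the paper intends: use Part~1 of \Cref{thm:exact_hl_results_in_p-adic_rmt} (or \Cref{cor:reprove_wood} when $N_\tau=\infty$) to identify the marginal of $\SN(A_\tau)$ as a Hall--Littlewood measure, use Part~3 to identify each transition $\SN(B_{\tau-1})\to\SN(B_\tau)$ with the $\boxtimes_{\mathbf b}$-convolution, and then invoke \Cref{prop:product_gives_proc}. The paper states this in one sentence (citing ``Parts 2, 3,'' which appears to be a typo for ``Parts 1, 3''); you have merely spelled out the bi-invariance bookkeeping that the paper leaves implicit.
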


The analogue of this result in the real/complex/quaternion case is given in \cite[Thm. 3.12]{ahn2019fluctuations}.

\section{Asymptotics of Macdonald polynomials and structure coefficients}\label{sec:3.2}

We now develop the machinery to prove \Cref{prop:measure_convergence_jacobi_and_cauchy} and \Cref{prop:measure_convergence_branching_corners}. Both of these statements involve limits of normalized structure coefficients 
\begin{equation*}
    \frac{P_{\nu(D)}(a_1,\ldots,a_n;q,t)}{P_{\l(D)}(a_1,\ldots,a_n;q,t)P_{\mu(D)}(a_1,\ldots,a_n;q,t)} c_{\l(D),\mu(D)}^{\nu(D)}(q,t)
\end{equation*}
for some signatures $\l(D),\mu(D),\nu(D)$, so we must establish asymptotics both on the Macdonald polynomials with real specializations $\ba$, and on the structure coefficients $c_{\l,\mu}^\nu(q,t)$ themselves. Both come from \Cref{thm:explicit_low_deg_terms} below, which treats the asymptotics of Macdonald polynomials in formal variables $x_1,\ldots,x_N$.

\explicitlow*

\begin{proof}
First, note that in the branching rule \eqref{eq:skewP_branch_formula} there is exactly on Gelfand-Tsetlin pattern $T_{max} \in \GT_P(\l(D)/())$ with weight $\l(D)$, namely the one with all entries as large as possible. One can check that $\psi(T_{max}) = 1$, so $T_{max}$ contributes the lexicographically highest-degree monomial $\bx^{\l(D)}$ of $P_{\l(D)}(x_1,\ldots,x_N)$. 

Define the signature $\hl(D) := ((L_0 \cdot D)[r_0] ,\ldots,(L_k \cdot D)[r_k])$, so $\bx^{\hl(D)} = \prod_{i=1}^{k} (x_{s_{i-1}+1} \cdots x_{s_{i}})^{L_i D}$. The idea of the proof is that for another monomial $\bx^{\hl(D) + \bd}$, the set of GT patterns of weight $\bx^{\hl(D) + \bd}$ stabilizes in size for all large $D$, and furthermore the structure of these GT patterns will be in a sense independent of $D$.

Fix $\bd \in \Z^n$ for the remainder of the proof. For any fixed monomial $\bx^\bd$ and sufficiently large $D$, all Gelfand-Tsetlin patterns contributing to the coefficient of $\bx^\bd$ in $P_{\l(D)}(x_1,\ldots,x_N)/\bx^{\hl(D)}$ will be as in \Cref{fig:gtdecomp1}, or in other words, all entries will be close to those of $T_{max}$.

\begin{figure}[htbp]
  \centering
    \includegraphics[scale=1]{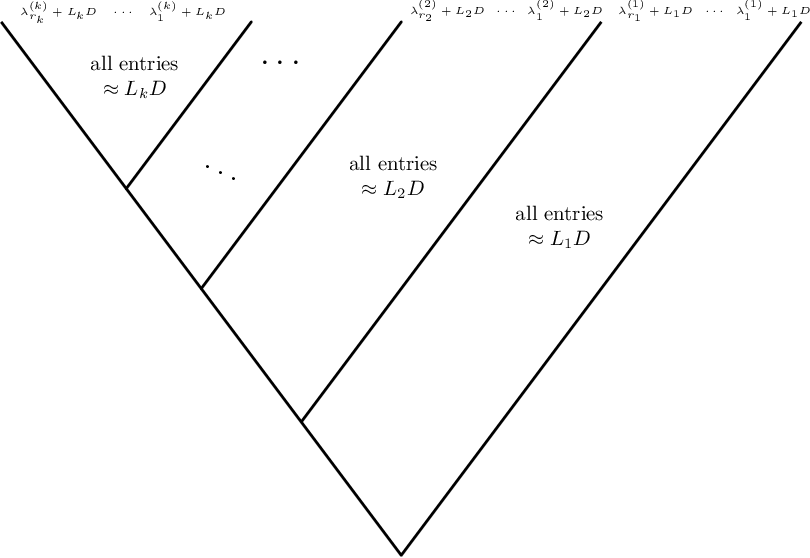}
\caption{The form of a Gelfand-Tsetlin pattern $T$ with $wt(T)$ close to $\l(D)$ for large $D$.}
\label{fig:gtdecomp1}
\end{figure}

It is natural to divide each of the `strips' of entries $\approx L_i D$ into two GT patterns, one triangular of the type in the $P$ branching rule and one rectangular of the type in the $Q$ branching rule, by splitting into the parts above and below row $s_i$ inclusive, see \Cref{fig:gtdecomp2}. Where each strip intersects the $s_i\tth$ row one has a signature $\kappa^{(i)} + L_i D$, so any $T \in \GT_P(\l(D))$ uniquely specifies smaller \emph{constituent GT patterns} $T_i^P \in \GT_P(\kappa^{(i)}/()), T_i^Q \in \GT_{Q,N-s_i}(\l^{(i)}/\kappa^{(i)})$ for each $i=1,\ldots,k$. It is also clear from the picture that any choice of these smaller GT patterns, i.e. choice of $\kappa^{(i)} \in \Sig_{r_i}, i =1,\ldots,k$ and elements of $\GT_P(\kappa^{(i)}/())$ and $\GT_{Q,s_i}(\l^{(i)}/\kappa^{(i)})$ for $i=1,\ldots,k$, uniquely specifies an element of $\GT(\l(D))$ provided $D$ is large enough that the rows are still weakly decreasing.

\begin{figure}[htbp]
  \centering
    \includegraphics[scale=1]{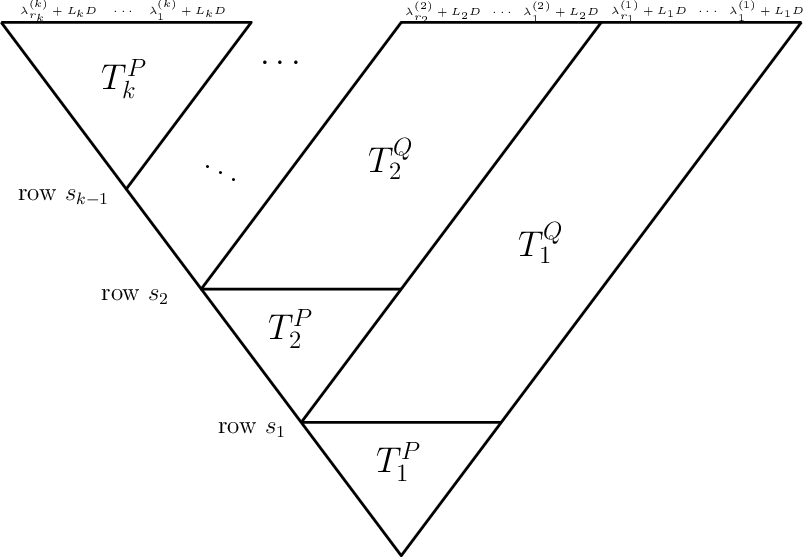}
\caption{The decomposition into constituent GT patterns.}
\label{fig:gtdecomp2}
\end{figure}

This motivates the following. For signatures $\kappa^{(1)} \in \Sig_{r_1},\ldots,\kappa^{(k)} \in \Sig_{r_k}$ and GT patterns $T_i^P \in \GT_P(\kappa^{(i)}/()), T_i^Q \in \GT_{Q,N-s_i}(\l^{(i)}/\kappa^{(i)})$ for $i=1,\ldots,k$, define (for all $D$ large enough that this makes sense) $BP_D(T_1^P,\ldots,T_k^P,T_1^Q,\ldots,T_k^Q) \in \GT_P(\l(D)/())$ to be the GT pattern of top row $\l(D)$ which decomposes into $T_1^P,\ldots,T_k^P,T_1^Q,\ldots,T_k^Q$ as above. For sufficiently large $D$, all GT patterns contributing to $\bx^{\bd}$ will be of the form $BP_D(T_1^P,\ldots,T_k^P,T_1^Q,\ldots,T_k^Q)$ for some $T_1^P,\ldots,T_k^Q$, and furthermore only finitely many such patterns will contribute to $\bx^{\bd}$ (and this finite number does not grow with $D$). Hence we may focus on describing the GT patterns $BP_D(T_1^P,\ldots,T_k^P,T_1^Q,\ldots,T_k^Q)$.

Given any $T \in \GT_{Q,s}(\mu/\nu)$ given by $\nu = \l^{(1)} \prec_Q \l^{(2)} \prec_Q \cdots \prec_Q \l^{(s)} = \mu$, define $\tT \in \GT_{Q,s}(-\nu/-\mu)$ by $-\mu  \prec -\l^{(s-1)} \prec \cdots \prec -\l^{(1)} = -\nu$. Similarly, given $T \in \GT_P(\kappa/())$ defined by $() \prec_P \l^{(1)} \prec_P \cdots \prec_P \l^{(\len(\kappa))} = \kappa$, let $\tT \in \GT_P(-\kappa/())$ be the GT pattern with $i\tth$ row $-\l^{(i)}$. We claim that 
\begin{equation}\label{eq:branch_coefs_converge}
    \lim_{D \to \infty} \psi(BP_D(T_1^P,\ldots,T_k^P,T_1^Q,\ldots,T_k^Q)) = \prod_{i=1}^k \psi(\tT_i^P)\varphi(\tT_i^Q).
\end{equation}

In the GT pattern $BP_D(T_1^P,\ldots,T_k^P,T_1^Q,\ldots,T_k^Q)$, we may view each entry as coming from one of the constituent GT patterns $T_i^P$ or $T_i^Q$. As $D \to \infty$, the difference between any two entries in a given row of $BP_D(T_1^P,\ldots,T_k^P,T_1^Q,\ldots,T_k^Q)$ which come from the same constituent GT pattern remains constant, while the difference between any two entries which come from different constituent GT patterns goes to infinity.

Recall from \eqref{eq:pbranch} that the $P$ branching coefficient $\psi(BP_D(T_1^P,\ldots,T_k^P,T_1^Q,\ldots,T_k^Q))$ is a product of factors 
\begin{equation}\label{eq:factors_in_pbranch1}
    \frac{f(t^{j-i}q^{\mu_i-\mu_j})}{f(t^{j-i}q^{\l_i-\mu_j})},
\end{equation}
and 
\begin{equation}\label{eq:factors_in_pbanch2}
    \frac{f(t^{j-i}q^{\l_i-\l_{j+1}})}{f(t^{j-i}q^{\mu_i-\l_{j+1}})}.
\end{equation}
Notice that when for example $\mu_i$ and $\mu_j$ come from different constituent GT patterns of the GT pattern $BP_D(T_1^P,\ldots,T_k^P,T_1^Q,\ldots,T_k^Q)$, $q^{\mu_i-\mu_j} \to 0$ as $D \to \infty$ since $|q|<1$, and hence $f(t^{j-i}q^{\mu_i-\mu_j}) \to f(0)=1$. Similarly the other three factors $f(t^{j-i}q^{\l_i-\l_{j+1}}),f(t^{j-i}q^{\l_i-\mu_j}),f(t^{j-i}q^{\mu_i-\l_{j+1}})$ converge to $1$. Because the number of these factors in \eqref{eq:pbranch} is finite independent of $D$, this implies that 
\begin{equation*}
    \lim_{D \to \infty} \psi(BP_D(T_1^P,\ldots,T_k^P,T_1^Q,\ldots,T_k^Q))
\end{equation*}
is equal the product of those $f(\cdots)^{\pm 1}$ factors corresponding to pairs of entries coming from the same constituent GT pattern. 

First let us consider the constituent GT patterns $T_i^P$. Because $f(t^{j-i}q^{\mu_i-\mu_j})$ depends only on the differences $j-i$ and $\mu_i-\mu_j$ but is independent of overall translation of the indices or the entries (and similarly for the other three $f$ terms), we see that the product of $f(\cdots)^{\pm 1}$ terms in $\psi(BP_D(T_0^P,\ldots,T_k^P,T_1^Q,\ldots,T_k^Q))$ corresponding to entries from a given constituent GT pattern $T_i^P$ is exactly $\psi(T_i^P)$. By the symmetry of \eqref{eq:pbranch} this is equal to $\psi(\tT_i^P)$, cf. \Cref{lem:basic_signature_func_properties}.

Now consider a constituent GT pattern $T_i^Q$. It follows from \eqref{eq:qbranch} that the product of factors \eqref{eq:factors_in_pbranch1} and \eqref{eq:factors_in_pbanch2} corresponding to pairs of entries in $T_i^Q$ is exactly $\varphi(\tT_i^Q)$, proving \eqref{eq:branch_coefs_converge}.

It is an easy check from our decomposition into constituent GT patterns that
\begin{equation}\label{eq:x_powers}
\frac{\bx^{wt(BP_D(T_1^P,\ldots,T_k^P,T_1^Q,\ldots,T_k^Q))}}{\bx^{\hl(D)}} = \prod_{i=1}^k (x_{s_{i-1}+1}^{wt(T_i^P)_1}\cdots x_{s_i}^{wt(T_i^P)_{r_i}})(x_{s_i+1}^{wt(T_i^Q)_1} \cdots x_N^{wt(T_i^Q)_{N-s_i}}). 
\end{equation}
Combining \eqref{eq:branch_coefs_converge}, \eqref{eq:x_powers}, and \Cref{lem:basic_signature_func_properties}, and summing over the $\kappa^{(i)}$, yields
\begin{multline*}
    \lim_{D \to \infty} \frac{P_{\l(D)/()}(x_1,\ldots,x_N)}{\bx^{\hl(D)}}[\bx^\bd] \\
    =\sum_{\kappa^{(i)} \in \Sig_{r_i},i=1,\ldots,k}
    \prod_{i=1}^k P_{\kappa^{(i)}/()}(x_{s_{i-1}+1},\ldots,x_{s_i}) Q_{-\kappa^{(i)}/-\l^{(i)}}(x_{s_i+1},\ldots,x_N)[\bx^\bd] \\
    = \prod_{i=1}^k P_{-\kappa^{(i)}/()}(x_{s_{i-1}+1}^{-1},\ldots,x_{s_i}^{-1}) Q_{-\kappa^{(i)}/-\l^{(i)}}(x_{s_i+1},\ldots,x_N)[\bx^\bd],
\end{multline*}
where $(\cdot)[\bx^\bd]$ denotes the coefficient of the $\bx^{\bd}$ term of the Laurent polynomial $(\cdot)$. Finally, applying the Cauchy identity \Cref{lem:asym_cauchy} to the RHS of the above completes the proof.
\end{proof}

\begin{rmk}
We have made no attempt to find the most general hypothesis on $q$ and $t$ under which the above result holds, as $q,t \in (-1,1)$ is the only range typically used in probabilistic applications. Some extra complications arise if $q,t$ are such that some of the $f(\cdots)$ factors in the denominator may vanish, or if $|q| \geq 1$--as then the argument that $f(\cdots)$ factors involving pairs of entries from different constituent GT patterns go to $1$ no longer holds. However, we believe that one may be able to prove the same result for more general values of $q,t$ with some additional analysis of cancellation between $f(\cdots)$ factors.
\end{rmk}

\begin{rmk}\label{rmk:schur_hl_stabilization}
For the Hall-Littlewood case $q=0$ or the Schur case $q=t$, the convergence statement \eqref{eq:branch_coefs_converge} is actually stabilization to equality for all sufficiently large $D$, and hence the coefficients of monomials in the statement of \Cref{thm:explicit_low_deg_terms} also stabilize for large $D$. This is because the branching coefficients $\psi(T)$ are `local' in these cases, meaning that they may be expressed as products over entries of the GT pattern rather than pairs of entries, so in particular entries of different constituent GT patterns do not interact. In the case $q=t$ this is particular clear, as $\psi(T) =1$ for any valid GT pattern $T$. 

Furthermore, at $q=t$ this stabilization is monotonic from below, i.e. for each $\mu \in \Z^N$, the coefficient of $\bx^\mu$ in the LHS of \Cref{thm:explicit_low_deg_terms} (which is an integer, as follows from the fact that the $\psi(T)$ are all $1$) is increasing in $D$ for all $D$ such that $\l(D)$ is a signature. 
\end{rmk}

\begin{rmk}
An asymptotic factorization statement somewhat similar to \Cref{thm:explicit_low_deg_terms} was proven for Jack functions in infinitely many variables by Okounkov-Olshanski \cite[Thm. 4.1]{okounkov1998asymptotics}, though we do not believe the two are directly related as our polynomials are in only finitely many variables.
\end{rmk}

We now convert \Cref{thm:explicit_low_deg_terms} into a statement about Macdonald polynomials specialized at real variables. 

\begin{defi}\label{def:proj_op}
For any finite subset $S \subset \Z^N$, let $\Proj_S: \C[x_1^{\pm 1},\ldots, x_N^{\pm 1}] \to \C[x_1^{\pm 1},\ldots, x_N^{\pm 1}]$ be the $\C$-linear operator with $\Proj_S \bx^\bd = \bbone(\bd \in S) \bx^\bd$.
\end{defi}

\begin{prop}\label{prop:spec_limits}
Let $t,q \in (-1,1)$, $\ba = (a_1,\ldots,a_N)$ with $a_1 > \ldots > a_N > 0$ be real numbers, and $L_i, r_i, s_i, \l(D), \hl(D)$ be as in \Cref{thm:explicit_low_deg_terms}. Then
\begin{multline}\label{eq:spec_limit}
    \lim_{D \to \infty} \frac{P_{\l(D)}(\ba)}{\ba^{\hl(D)}} =  \prod_{i=1}^{k} 
    P_{\l^{(i)}}(a_{s_{i-1}+1},\ldots,a_{s_i})  \prod_{i=1}^{k-1} \Pi_{(q,t)}(a_{s_{i-1}+1}^{-1},\ldots,a_{s_i}^{-1}; a_{s_i+1},\ldots,a_{N})
\end{multline}
\end{prop}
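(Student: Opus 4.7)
The plan is to lift the coefficient-wise Laurent polynomial convergence of \Cref{thm:explicit_low_deg_terms} to numerical convergence at the real specialization $\bx = \ba$ via a dominated convergence argument, using the constituent GT pattern decomposition introduced in the proof of that theorem.

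First, I would verify the RHS of \Cref{prop:spec_limits} is a well-defined real number: the strict ordering $a_1 > a_2 > \cdots > a_N > 0$ forces $a_{s_{i-1}+\ell}^{-1} a_{s_j + m} \in (0, 1)$ for each index pair appearing in the Cauchy kernels, so every $\Pi_{(q,t)}(a_{s_{i-1}+1}^{-1},\ldots,a_{s_i}^{-1}; a_{s_i+1},\ldots,a_N)$ converges absolutely.

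Next, I would expand the LHS via the constituent decomposition. For each tuple $(\kappa^{(i)}, T_i^P, T_i^Q)_{i=1}^k$ with $\kappa^{(i)} \in \Sig_{r_i}$, $T_i^P \in \GT_P(\kappa^{(i)}/())$, and $T_i^Q \in \GT_{Q,N-s_i}(\l^{(i)}/\kappa^{(i)})$, the glued pattern $BP_D(T_1^P,\ldots,T_k^Q) \in \GT_P(\l(D))$ is well-defined once $D$ exceeds a tuple-dependent threshold, giving
\begin{equation*}
\frac{P_{\l(D)}(\ba)}{\ba^{\hl(D)}} = \sum_{(\kappa^{(i)}, T_i^P, T_i^Q) \text{ valid for } D} \psi(BP_D)\; \prod_{i=1}^k \Bigl(\prod_{\ell=1}^{r_i} a_{s_{i-1}+\ell}^{wt(T_i^P)_\ell}\Bigr)\Bigl(\prod_{m=1}^{N-s_i} a_{s_i+m}^{wt(T_i^Q)_m}\Bigr).
\end{equation*}
For each fixed tuple, the validity constraint is eventually vacuous as $D \to \infty$, and \eqref{eq:branch_coefs_converge} gives pointwise convergence $\psi(BP_D) \to \prod_i \psi(\tT_i^P)\varphi(\tT_i^Q)$. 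Summing the pointwise limits over $T_i^P$ and $T_i^Q$ via \Cref{def:skew_signature_functions} yields $P_{\kappa^{(i)}}(a_{s_{i-1}+1},\ldots,a_{s_i})$ and $Q_{-\kappa^{(i)}/-\l^{(i)}}(a_{s_i+1},\ldots,a_N)$ respectively, and then summing over $\kappa^{(i)}$ via \Cref{lem:asym_cauchy} and \Cref{lem:basic_signature_func_properties} (in the skew Cauchy form $\sum_\kappa P_\kappa(u) Q_{\kappa/\mu}(w) = \Pi(u; w) P_\mu(u)$, obtained from \Cref{lem:asym_cauchy} by taking $\nu = ()$) produces the claimed product on the RHS.

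The main obstacle is justifying the interchange of the $D \to \infty$ limit with the (infinite) sum over constituent tuples. The plan is to supply a $D$-uniform, summable majorant. Every factor $f(t^{j-i}q^m)^{\pm 1}$ appearing in $\psi(BP_D)$ has $|t^{j-i} q^m| \le 1$, because the $\prec_P$-interlacing of weakly decreasing signatures forces all exponents $\l_i - \l_{j+1}$, $\l_i - \mu_j$, $\mu_i - \l_{j+1}$, $\mu_i - \mu_j$ (with $i \le j$) in \eqref{eq:pbranch} to be nonnegative; hence $f$ and $1/f$ are continuous and nonvanishing on this compact set, yielding a uniform bound $|\psi(BP_D)| \le C = C(q, t, N)$ independent of both $D$ and the tuple. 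For the Hall-Littlewood case $q = 0$ used throughout the rest of the paper, $\psi, \varphi \ge 0$, so the full summand is bounded by $C$ times the corresponding summand of the limiting series, whose absolute convergence has been established; in fact \Cref{rmk:schur_hl_stabilization} shows that at $q = 0$ each $\psi(BP_D)$ stabilizes exactly to $\prod_i \psi(\tT_i^P)\varphi(\tT_i^Q)$ for all $D$ sufficiently large relative to the tuple, reducing the interchange to monotone convergence. For general $q, t \in (-1, 1)$, the uniform bound on $|f|^{\pm 1}$ allows comparison with the $q=0$ Hall-Littlewood case factor by factor, giving the needed absolute majorant by the same Cauchy identity.
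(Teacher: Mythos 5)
Your overall strategy — decompose into constituent GT patterns, obtain a uniform bound on $\psi$, and interchange the $D\to\infty$ limit with the tuple sum — is exactly the paper's, so the conceptual content is right. But there are two places where the execution as written does not go through.

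First, the displayed expansion
\begin{equation*}
\frac{P_{\l(D)}(\ba)}{\ba^{\hl(D)}} = \sum_{(\kappa^{(i)}, T_i^P, T_i^Q)\text{ valid for } D} \psi(BP_D)\,\prod_i(\cdots)
\end{equation*}
is not an identity for finite $D$. Not every $T\in\GT_P(\l(D))$ decomposes into constituent patterns: already for $N=2$, $\l(D)=(D+\l^{(1)}_1,\l^{(2)}_1)$, the middle entry can sit near $D/2$, far from either strip. Such a pattern is not of the form $BP_D(\cdots)$ for any tuple and your sum simply omits it. The paper avoids this by working coefficient-by-coefficient: for a \emph{fixed} monomial exponent $\bd$ and $D$ large (depending on $\bd$), all contributing GT patterns do decompose, and then the error from high-degree monomials is controlled separately via $(\Id-\Proj_S)$. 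Without that localization, your monotone-convergence conclusion at $q=0$ rests on a false equality of series.

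Second, your majorant for general $q,t$ is the wrong comparison family. The uniform bound $|\psi_{q,t}(BP_D)|\le C$ bounds the $D$-dependent summand by $C\cdot\ba^{wt}$, and the series $\sum_{\text{tuples}}\ba^{wt}$ is exactly the \emph{Schur} ($q=t$) case, where every branching coefficient is identically $1$. That is the comparison the paper makes: $|(\Id-\Proj_S)P_{\l(D)}(\ba;q,t)/\ba^{\hl(D)}|\le C\,|(\Id-\Proj_S)P_{\l(D)}(\ba;q,q)/\ba^{\hl(D)}|$, with the Schur side controlled by the monotone stabilization in \Cref{rmk:schur_hl_stabilization}. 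Comparing instead to the Hall-Littlewood ($q=0$) series, as you propose, requires showing $|\psi_{q,t}(T)|\le C'\psi_{0,t}(T)$, which in turn needs a lower bound on $\psi_{0,t}(T)$; that bound happens to be available (each HL factor is $1-t^m\ge 1-|t|>0$), but you neither state nor use it, and it makes the argument strictly more circuitous than the Schur route. Replace the $q=0$ comparison with the $q=t$ Schur one and supply the monomial-wise truncation, and the proof closes.
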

\begin{proof}

Let $R_{q,t}(\ba)$ be the RHS of \eqref{eq:spec_limit}. For any $\eps > 0$ we must find $D_0$ so that 
\begin{equation*}
    \abs*{ \frac{P_{\l(D)}(\ba)}{\ba^{\hl(D)}} - R_{q,t}(\ba) } < \eps
\end{equation*}
for all $D > D_0$. Below we will abuse notation and write $\Proj_S$ acting on a Laurent polynomial in the real numbers $a_1,\ldots,a_N$ to mean `take the corresponding polynomial in formal variables $x_i$, apply $\Proj_S$, then specialize $x_i=a_i$ for each $i$'.
Then for any $S$, we write
\begin{multline}\label{eq:3tobound}
    \abs*{ \frac{P_{\l(D)}(\ba)}{\ba^{\hl(D)}} - R_{q,t}(\ba) } \leq \abs*{ \Proj_{S}\left( \frac{P_{\l(D)}(\ba)}{\ba^{\hl(D)}} - R_{q,t}(\ba) \right)} \\ + \abs*{ (\Id - \Proj_{S})\frac{P_{\l(D)}(\ba)}{\ba^{\hl(D)}} } + \abs*{ (\Id - \Proj_{S}) R_{q,t}(\ba) },
\end{multline}

The first and third terms of the RHS are easy to bound. The first term of \eqref{eq:3tobound}, is a finite sum of $|S|$ Laurent monomials in $a_1,\ldots,a_N$ with coefficients that converge to $0$ as $D \to \infty$ by Theorem \ref{thm:explicit_low_deg_terms}. Hence for any $S$ we may choose $D_0$ so that 
\begin{equation*}
    \abs*{ \Proj_{S}\left( \frac{P_{\l(D)}(\ba)}{\ba^{\hl(D)}} - R_{q,t}(\ba) \right)} < \frac{\eps}{3}
\end{equation*}
for all $D > D_0$. For the third term, $R_{q,t}(\ba)$ is a convergent power series in the variables $a_j/a_i, j > i$, so we may choose $S$ sufficiently large that
\begin{equation}\label{eq:cauchy_tails}
    \abs*{(\Id - \Proj_{S})R_{q,t}(\ba)} < \frac{\eps}{3}.
\end{equation}

The second term is slightly trickier. Recall from \Cref{lem:branching_formulas} that $f(u) := (tu;q)_\infty/(qu;q)_\infty$. In particular, since $q,t \in (-1,1)$, $f(u)$ is defined, continuous, and nonzero on $[-1,1]$, so  
\begin{equation}\label{eq:qpocchammer_bound}
   \abs*{ \frac{f(u_1)}{f(u_2)}} \leq \frac{\sup_{u \in [-1,1]}f(u)}{\inf_{u \in [-1,1]} f(u)}.
\end{equation}
Recall that $\psi(T)$ is a finite product of factors $\frac{f(u_1)}{f(u_2)}$ for $u_1,u_2$ products of powers of $q,t$ (and in particular lying in $[-1,1]$). Hence there is a constant $C$ depending only on $N$, which is an appropriate power of the RHS of \eqref{eq:qpocchammer_bound}, such that for any $\kappa \in \Sig_N$ and $T \in \GT_P(\kappa)$ we have 
\begin{equation}\label{eq:psi_bound}
    |\psi(T)| \leq C.
\end{equation}

As in \eqref{eq:cauchy_tails} we may choose $S$ large enough so that  
\begin{equation}\label{eq:cauchy_tails_2}
    \abs*{(\Id - \Proj_{S})R_{q,q}(\ba)} < \frac{\eps}{3C}.
\end{equation}
By Remark \ref{rmk:schur_hl_stabilization} on the Schur case, for any $\bd \in \Z^d$ one has that $(P_{\l(D)}(\bx;q,q)/\bx^{\hl(D)})[\bx^\bd]$ is an increasing sequence (in $D$) of integers which stabilizes to $R_{q,q}(\bx)[\bx^\bd]$ for large $D$. It follows from this and the nonnegativity of the Laurent monomials $\ba^\bd$ that
\begin{equation}\label{eq:schur_cauchy_bound}
    (\Id - \Proj_{S})\frac{P_{\l(D)}(a_1,\ldots,a_N;q,q)}{\ba^{\hl(D)}} \leq (\Id - \Proj_{S}) R_{q,q}(\ba)
\end{equation}
(we drop absolute values because all terms in the hidden summations on each side are positive). Putting together \eqref{eq:psi_bound}, \eqref{eq:cauchy_tails_2} and \eqref{eq:schur_cauchy_bound} yields
\begin{align*}
     \abs*{ (\Id - \Proj_{S})\frac{P_{\l(D)}(\ba;q,t)}{\ba^{\hl(D)}} } 
    & \leq C \abs*{ (\Id - \Proj_{S})\frac{P_{\l(D)}(\ba;q,q)}{\ba^{\hl(D)}} } \\
    & \leq C \abs*{ (\Id - \Proj_{S}) R_{q,q}(\ba) } \\
    & < \eps/3.
\end{align*}
This handles the second term of \eqref{eq:3tobound}, so choosing $S$ large enough that all three $\eps/3$ bounds are simultaneously satisfied completes the proof.
\end{proof}

\Cref{thm:explicit_low_deg_terms} may also be used to control asymptotics of the structure coefficients $c_{\l,\mu}^\nu(q,t)$. We first prove a simple lemma. Define the \emph{dominance order} $\unlhd$ on $\Z^n$ by $\mathbf{v} \unlhd \mathbf{w}$ if $\sum_i v_i = \sum_i w_i$ and 
\begin{equation*}
    \sum_{i=1}^j v_i \leq \sum_{i=1}^j w_i \quad \text{ for }j=1,\ldots,n.
\end{equation*}

\begin{lemma}\label{lem:projections_suffice}
Let $\l(D), \mu(D), \kappa(D) \in \Sig_N$ be three sequences of signatures of the form in \Cref{thm:explicit_low_deg_terms} (possibly for different $L_i, r_i$), such that $\hl(D) + \hm(D) = \hk(D)$. Let $\tl \in \Z^N$ be the tuple such that $\l(D) = \hl(D)+\tl(D)$, and define $\tm, \tk$ similarly. Let $S$ denote the (finite) interval $[\tk,\tl+\tm]$ in $\Z^N$ with respect to the dominance order, or explicitly $S = \{\bd \in \Z^N: \tk \unlhd \bd \unlhd \tl+\tm\}$.  
Then
\begin{enumerate}[label={\arabic*.}]
    \item The set $\{\lim_{D \to \infty} \Proj_S P_{\hk(D)+\bd}(\bx)/\bx^{\hk(D)} : \bd \in S\}$ is a basis for $\Proj_S \C[x_1^{\pm},\ldots,x_N^{\pm}]$. Here the limits of Laurent polynomials are in the sense of convergence of coefficients of each monomial.
    \item The coefficient of $\lim_{D \to \infty} \Proj_S P_{\kappa(D)}/\bx^{\hk(D)}$ in the decomposition of 
    \begin{equation*}
        \lim_{D \to \infty} \Proj_S \frac{P_{\l(D)}(\bx)P_{\mu(D)}(\bx)}{\bx^{\hk(D)}}
    \end{equation*}
    in the above basis is $\lim_{D \to \infty} c^{\kappa(D)}_{\l(D),\mu(D)}(q,t)$.
\end{enumerate}
\end{lemma}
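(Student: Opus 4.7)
For \textbf{Claim 1}, I plan to invoke the standard unitriangular expansion of Macdonald polynomials in the dominance order,
\begin{equation*}
    P_\lambda(\bx;q,t) = \bx^\lambda + \sum_{\bd' \lhd \lambda} a_{\lambda,\bd'}(q,t)\, \bx^{\bd'}
\end{equation*}
(see \cite[VI.(4.7)]{mac}). Applying it with $\lambda = \hk(D)+\bd$, dividing by $\bx^{\hk(D)}$, and projecting yields
\begin{equation*}
    \Proj_S \frac{P_{\hk(D)+\bd}}{\bx^{\hk(D)}} = \bx^\bd + \sum_{\bd' \lhd \bd,\ \bd' \in S} (\cdots)\, \bx^{\bd'}.
\end{equation*}
Ordering $S$ compatibly with $\unlhd$, the transition matrix to the monomial basis $\{\bx^\bd : \bd \in S\}$ is unitriangular, so the stated set forms a basis of $\Proj_S \C[x_1^{\pm 1},\ldots,x_N^{\pm 1}]$ at every finite $D$ (with the understanding that $\bd$ ranges over those tuples for which $\hk(D)+\bd$ is actually a signature). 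By \Cref{thm:explicit_low_deg_terms} each such polynomial converges coefficient-wise as $D \to \infty$; the leading monomial $\bx^\bd$ survives with coefficient $1$, so the unitriangular structure---and hence the basis property---passes to the limit.

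For \textbf{Claim 2}, the plan is to start from the expansion
\begin{equation*}
    P_{\l(D)}P_{\mu(D)} = \sum_\nu c^\nu_{\l(D),\mu(D)}(q,t)\, P_\nu,
\end{equation*}
divide by $\bx^{\hk(D)} = \bx^{\hl(D)}\bx^{\hm(D)}$, and apply $\Proj_S$. The crucial step is to cut the sum down to those $\nu$ with $\nu - \hk(D) \in S$. This rests on two dominance observations: (i) $c^\nu_{\l(D),\mu(D)} \neq 0$ forces $\nu \unlhd \l(D)+\mu(D) = \hk(D) + \tl + \tm$, so $\nu - \hk(D) \unlhd \tl + \tm$; and (ii) $\Proj_S(P_\nu/\bx^{\hk(D)})$ is supported on $\{\bd \in S : \bd \unlhd \nu - \hk(D)\}$, and any such $\bd$ satisfies $\tk \unlhd \bd \unlhd \nu - \hk(D)$, forcing $\tk \unlhd \nu - \hk(D)$ by transitivity---otherwise this support is empty and the whole projection vanishes. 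For the surviving $\nu$, $\Proj_S(P_\nu/\bx^{\hk(D)})$ is exactly the $(\nu - \hk(D))$-indexed basis element from Claim 1, yielding
\begin{equation*}
    \Proj_S \frac{P_{\l(D)}P_{\mu(D)}}{\bx^{\hk(D)}} = \sum_{\bd \in S} c^{\hk(D)+\bd}_{\l(D),\mu(D)}(q,t)\, \Proj_S \frac{P_{\hk(D)+\bd}}{\bx^{\hk(D)}},
\end{equation*}
with the coefficient at $\bd = \tk$ equal to $c^{\kappa(D)}_{\l(D),\mu(D)}$.

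To conclude, I would take $D \to \infty$. The LHS converges coefficient-wise by \Cref{thm:explicit_low_deg_terms} applied factorwise to the splitting $P_{\l(D)}P_{\mu(D)}/\bx^{\hk(D)} = (P_{\l(D)}/\bx^{\hl(D)})(P_{\mu(D)}/\bx^{\hm(D)})$, and each basis element on the RHS converges by \textbf{Claim 1}; uniqueness of the expansion in a basis, preserved under the limit, then forces each coefficient $c^{\hk(D)+\bd}_{\l(D),\mu(D)}$ to converge. In particular $\lim_D c^{\kappa(D)}_{\l(D),\mu(D)}$ exists and equals the $\tk$-coefficient of the limiting LHS, which is the desired statement. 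The main obstacle I anticipate is the transitivity-of-dominance step in (ii), which is what confines the contributing $\nu$ to $\nu - \hk(D) \in S$; the surrounding bookkeeping (possible non-signature $\bd \in S$, matching the block structures of $\hl$, $\hm$, $\hk$) is essentially routine once the dominance framework is in place.
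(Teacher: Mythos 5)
Your argument is correct and is precisely the expansion the paper has in mind: the paper's own proof of this lemma is only two sentences, invoking unitriangularity of $P_\lambda$ in the dominance order and declaring the second part "clear," and your proposal fills in exactly the dominance bookkeeping (the support bound $\nu \unlhd \l(D)+\mu(D)$, the transitivity step confining surviving $\nu$ to $\nu-\hk(D)\in S$, and the unitriangular transition matrix passing to the limit) that makes those two sentences rigorous.
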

\begin{proof}
The first part follows since the Macdonald polynomials are homogeneous and may be written as $P_\l(\bx) = \bx^\l + (\text{lower order terms in the dominance order})$. The second part is then clear.
\end{proof}

\begin{defi}
Let $\l,\nu \in \Sig_n, \mu \in \Sig_r$. Define $d_{\l,\mu}^\nu(q,t)$ by\footnote{These are related by duality to the structure coefficients $c_{\l,\mu}^\nu$, see \cite[Ch. VI]{mac}, but we will not elaborate on this because we do not need it and due to our conventions with signatures it would be somewhat cumbersome to state.}
\begin{equation*}
    Q_{\nu/\l}(x_1,\ldots,x_r) = \sum_{\mu \in \Sig_r} d_{\l,\mu}^\nu(r) P_\mu(x_1,\ldots,x_r).
\end{equation*}
\end{defi}

\begin{prop}\label{prop:bet2}
Let $n \leq m \leq N$ such that $n \leq N-m$, let $\l,\nu \in \Sig_n$, and let $q,t \in (-1,1)$. Then
\begin{equation}\label{eq:bet2_1}
    \lim_{D \to \infty} c_{(D[N-n],\l),(D[N-m],0[m])}^{(2D[N-m-n],2D-\eta_n,\ldots,2D-\eta_1,D[m-n],\nu_1,\ldots,\nu_n)}(q,t) 
    = d_{\l,\eta}^\nu(q,t)
\end{equation}
and 
\begin{equation}\label{eq:bet2_2}
    \lim_{D \to \infty} c_{(D[N-n],0[n]),(D[N-m],0[m])}^{(2D[N-m]+\alpha,D[m-n]+\beta,\nu_1,\ldots,\nu_n)}(q,t) = 0
\end{equation}
for all $\alpha \in \Sig_{N-m}, \beta \in \Sig_{m-n}$ not as in \eqref{eq:bet2_1}. 
\end{prop}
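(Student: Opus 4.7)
I would combine \Cref{thm:explicit_low_deg_terms} with \Cref{lem:projections_suffice}, in the same spirit as the coming proof of \Cref{prop:measure_convergence_jacobi_and_cauchy}. Partition the variables into three blocks $\bx' = (x_1,\ldots,x_{N-m})$, $\by'' = (x_{N-m+1},\ldots,x_{N-n})$, $\by = (x_{N-n+1},\ldots,x_N)$, corresponding to the three levels $2D$, $D$, $0$ appearing in $\kappa(D)$. Writing $\hl(D) = (D[N-n],0[n])$, $\hm(D) = (D[N-m],0[m])$, $\hk(D) = (2D[N-m],D[m-n],0[n])$, one readily checks $\hl(D) + \hm(D) = \hk(D)$, placing us in the setting of \Cref{lem:projections_suffice}. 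Then \Cref{thm:explicit_low_deg_terms} yields
\begin{align*}
\lim_{D\to\infty}\frac{P_{\l(D)}(\bx)}{\bx^{\hl(D)}} &= P_\l(\by)\,\Pi(\bx^{-1};\by), \quad \lim_{D\to\infty}\frac{P_{\mu(D)}(\bx)}{\bx^{\hm(D)}} = \Pi(\bx'^{-1};\by'),\\
\lim_{D\to\infty}\frac{P_{\kappa(D)}(\bx)}{\bx^{\hk(D)}} &= P_\alpha(\bx')P_\beta(\by'')P_\nu(\by)\,\Pi(\bx'^{-1};\by')\,\Pi(\by''^{-1};\by)
\end{align*}
for $\kappa(D) = \hk(D) + (\alpha,\beta,\nu)$ as in \eqref{eq:bet2_2}. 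Using $\bx = \bx' \sqcup \by''$ and $\by' = \by'' \sqcup \by$ to factorize $\Pi(\bx^{-1};\by) = \Pi(\bx'^{-1};\by)\Pi(\by''^{-1};\by)$ and $\Pi(\bx'^{-1};\by') = \Pi(\bx'^{-1};\by'')\Pi(\bx'^{-1};\by)$, multiplying the first two limits gives
\begin{equation*}
\lim_{D\to\infty}\frac{P_{\l(D)}(\bx)P_{\mu(D)}(\bx)}{\bx^{\hk(D)}} = P_\l(\by)\,\Pi(\bx'^{-1};\by)^2\,\Pi(\by''^{-1};\by)\,\Pi(\bx'^{-1};\by'').
\end{equation*}

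The key algebraic step is to expand the single factor $P_\l(\by)\Pi(\bx'^{-1};\by)$. Specializing \Cref{lem:asym_cauchy} with empty $\nu$ yields the skew Cauchy identity
\begin{equation*}
P_\l(\by)\Pi(\bx'^{-1};\by) = \sum_{\nu\in\Sig_n} Q_{\nu/\l}(\bx'^{-1})P_\nu(\by);
\end{equation*}
combining this with $Q_{\nu/\l}(\bx'^{-1}) = \sum_\eta d^\nu_{\l,\eta}(q,t)P_\eta(\bx'^{-1})$ and with the identity $P_\eta(\bx'^{-1}) = P_{(0[N-m-n],-\eta_n,\ldots,-\eta_1)}(\bx')$ (from \eqref{eq:p_invert_vars}, after padding $\eta \in \Sig_n^+$ with zeros to length $N-m$), I would rewrite the limit of $P_{\l(D)}P_{\mu(D)}/\bx^{\hk(D)}$ as
\begin{equation*}
\sum_{\nu\in\Sig_n,\,\eta\in\Sig_n^+} d^\nu_{\l,\eta}(q,t)\,P_{(0[N-m-n],-\eta_n,\ldots,-\eta_1)}(\bx')\,P_\nu(\by)\,\Pi(\bx'^{-1};\by')\,\Pi(\by''^{-1};\by),
\end{equation*}
which matches term-by-term with $\sum_{\nu,\eta} d^\nu_{\l,\eta}$ times the limits of $P_{\kappa(D)}/\bx^{\hk(D)}$ for the specific $\kappa(D)$ in \eqref{eq:bet2_1}.

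Finally, projecting both sides to a sufficiently large dominance interval $S$ and invoking Part 2 of \Cref{lem:projections_suffice} identifies these expansion coefficients as $\lim_{D\to\infty}c^{\kappa(D)}_{\l(D),\mu(D)}$. For $\kappa(D)$ as in \eqref{eq:bet2_1}, this gives exactly $d^\nu_{\l,\eta}(q,t)$, proving \eqref{eq:bet2_1}; for $\kappa(D)$ of the more general form in \eqref{eq:bet2_2} that does \emph{not} match \eqref{eq:bet2_1}, the corresponding basis element is absent from the above expansion, so its coefficient vanishes in the limit, and in the special case $\l = (0[n])$ (where $d^\nu_{0,\eta}$ is concentrated on $\eta = \nu$) this directly yields \eqref{eq:bet2_2}. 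The main technical hurdle I anticipate is bookkeeping the stability of $d^\nu_{\l,\eta}$ across different numbers of variables---the expansion of $Q_{\nu/\l}$ naturally uses $N-m$ variables while $\eta$ appears in the proposition as an element of $\Sig_n$---together with justifying that the formal $\Pi$-series manipulations become rigorous once projected to the finite-dimensional $\Proj_S\,\mathbb{C}[x_1^{\pm 1},\ldots,x_N^{\pm 1}]$.
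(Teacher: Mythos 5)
Your proposal is correct and follows the same path as the paper's proof: apply \Cref{thm:explicit_low_deg_terms} to all three sequences $\l(D),\mu(D),\kappa(D)$ (with $\hl(D)+\hm(D)=\hk(D)$), factor the Cauchy kernels over the three variable blocks, expand $P_\l(\by)\Pi(\bx'^{-1};\by)$ via the skew Cauchy identity and the definition of $d^\nu_{\l,\eta}$, match term-by-term with the limits of $P_{\kappa(D)}/\bx^{\hk(D)}$, and invoke \Cref{lem:projections_suffice} to read off $\lim c^{\kappa(D)}_{\l(D),\mu(D)}$. The notational reorganization (the three blocks $\bx',\by'',\by$ and the explicit zero-padding of $\eta$) is an equivalent presentation of the same steps.
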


\begin{proof}
Let $\l(D) = (D[N-n],\l), \mu(D) = (D[N-m],0[m]),$ and $\kappa(D;\alpha,\beta,\nu) = (2D[N-m]+\alpha,D[m-n]+\beta,\nu_1,\ldots,\nu_n)$, which we will write as simply $\kappa(D)$ when the other signatures are clear from context. Fix $\alpha,\beta,\nu$, and let $S$ be as in \Cref{lem:projections_suffice}. By \Cref{thm:explicit_low_deg_terms} we have
\begin{align*}
    P_{\l(D)}(\bx)/\bx^{\hl(D)} &\to P_\l(x_{N-n+1},\ldots,x_N) \Pi(x_1^{-1},\ldots,x_{N-n}^{-1}; x_{N-n+1},\ldots,x_N) \\
    P_{\mu(D)}(\bx)/\bx^{\hm(D)} &\to \Pi(x_1^{-1},\ldots,x_{N-m}^{-1}; x_{N-m+1},\ldots,x_N).
\end{align*}
Splitting the latter Cauchy kernel,
\begin{multline*}
    \lim_{D \to \infty} \Proj_S \frac{P_{\l(D)}(\bx)P_{\mu(D)}(\bx)}{\bx^{\hl(D)+\hm(D)}}  \\
    = \Proj_S  \Pi(x_1^{-1},\ldots,x_{N-n}^{-1}; x_{N-n+1},\ldots,x_N) \Pi(x_1^{-1},\ldots,x_{N-m}^{-1}; x_{N-m+1},\ldots,x_{N-n}) \\ \cdot \left(\Pi(x_1^{-1},\ldots,x_{N-m}^{-1} ; x_{N-n+1},\ldots,x_N) P_\l(x_{N-n+1},\ldots,x_N) \right)
\end{multline*}
Applying \Cref{lem:asym_cauchy}, the definition of the coefficients $d_{\l,\eta}^\nu$, and \Cref{lem:basic_signature_func_properties}, one has
\begin{align*}
    &\Pi(x_1^{-1},\ldots,x_{N-m}^{-1} ; x_{N-n+1},\ldots,x_N) P_\l(x_{N-n+1},\ldots,x_N) \\
    &= \sum_{\nu \in \Sig_n} P_{\nu}(x_{N-n+1},\ldots,x_N)Q_{\nu/\l}(x_1^{-1},\ldots,x_{N-m}^{-1}) \\
    &=  \sum_{\substack{\nu \in \Sig_n \\ \eta \in \Sig_{N-m}}} d_{\l,\eta}^\nu P_{\nu}(x_{N-n+1},\ldots,x_N)P_\eta(x_1^{-1},\ldots,x_{N-m}^{-1}) \\
    &= \sum_{\substack{\nu \in \Sig_n \\ \eta \in \Sig_{N-m}}} d_{\l,\eta}^\nu P_{\nu}(x_{N-n+1},\ldots,x_N)P_{-\eta}(x_1,\ldots,x_{N-m}).
\end{align*}
By straightforward application of \Cref{thm:explicit_low_deg_terms},
\begin{multline}\label{eq:kappa_limit}
    \lim_{D \to \infty} \Proj_S \frac{P_{\kappa(D)}(\bx)}{\bx^{\hk(D)}} = \Proj_S P_\alpha(x_1,\ldots,x_{N-m})P_\beta(x_{N-m+1},\ldots,x_{N-n})P_\nu(x_{N-n+1},\ldots,x_N)\\
    \cdot \Pi(x_1^{-1},\ldots,x_{N-m}^{-1};x_{N-m+1},\ldots,x_N)\Pi(x_{N-m+1}^{-1},\ldots,x_{N-n}^{-1};x_{N-n+1},\ldots,x_N) \\
    = \Proj_S P_\alpha(x_1,\ldots,x_{N-m})P_\beta(x_{N-m+1},\ldots,x_{N-n})P_\nu(x_{N-n+1},\ldots,x_N) \\
    \cdot \Pi(x_1^{-1},\ldots,x_{N-n}^{-1}; x_{N-n+1},\ldots,x_N) \Pi(x_1^{-1},\ldots,x_{N-m}^{-1}; x_{N-m+1},\ldots,x_{N-n})
\end{multline}
We see that 
\begin{multline*}
    \lim_{D \to \infty} \Proj_S \frac{P_{\l(D)}(\bx)P_{\mu(D)}(\bx)}{\bx^{\hl(D)+\hm(D)}} 
    = \Proj_S  \Pi(x_1^{-1},\ldots,x_{N-n}^{-1}; x_{N-n+1},\ldots,x_N) \\
    \cdot \Pi(x_1^{-1},\ldots,x_{N-m}^{-1}; x_{N-m+1},\ldots,x_{N-n}) \sum_{\substack{\nu \in \Sig_n \\ \eta \in \Sig_{N-m}}} d_{\l,\eta}^\nu P_{\nu}(x_{N-n+1},\ldots,x_N)P_{-\eta}(x_1,\ldots,x_{N-m})
\end{multline*}
is a finite (because $S$ is finite) sum of terms of the form RHS\eqref{eq:kappa_limit} for those $\kappa(D;\alpha,\beta,\nu)$ for which $\alpha = -\eta, \beta = (0[m-n])$. Furthermore, the coefficients of these terms are $d_{\l,\eta}^\nu$. Applying \Cref{lem:projections_suffice} completes the proof.
\end{proof}

We now combine these results.

\begin{proof}[Proof of \Cref{prop:measure_convergence_jacobi_and_cauchy}]
Let $\l(D),\mu(D),\kappa(D;\alpha,\beta,\nu)$ be as in the previous proof, and $S \subset \Sig_{N-m}$ be finite. Since we have assumed $q,t$ are such that the structure coefficients are nonnegative,
\begin{equation}\label{eq:struc_coef_bound}
    \mcj(\nu) \geq \sum_{\eta \in S} c_{\l(D),\mu(D)}^{\kappa(D;-\eta,(0[m-n]),\nu)} \frac{P_{\kappa(D;-\eta,(0[m-n]),\nu)}(\ba)}{P_{\l(D)}(\ba)P_{\mu(D)}(\ba)}
\end{equation}
by simply taking only finitely many of the terms in the sum used to define $\mcj$. Note that $\hk = \hl+\hm$, so we may divide numerator by $\ba^\hk$ and denominator by $\ba^\hl \cdot \ba^\hm$ to obtain the LHS of \Cref{prop:spec_limits}. Applying \Cref{prop:bet2} and \Cref{prop:spec_limits} to the structure coefficients and specialized Macdonald polynomials on the RHS of \eqref{eq:struc_coef_bound}, respectively, we obtain
\begin{align*}
    \lim_{D \to \infty} \mcj(\nu) &\geq \sum_{\eta \in S} \lim_{D \to \infty}  c_{\l(D),\mu(D)}^{\kappa(D;-\eta,(0[m-n]),\nu)} \frac{P_{\kappa(D;-\eta,(0[m-n]),\nu)}(\ba)/\ba^{\hk}}{P_{\l(D)}(\ba)/\ba^\hl \cdot P_{\mu(D)}(\ba)/\ba^{\hm}} \\
    &= \sum_{\eta \in S} d_{\l,\eta}^\nu \frac{P_{-\eta}(a_1,\ldots,a_{N-m})P_\nu(a_{N-n+1},\ldots,a_N)}{P_\l(a_{N-n+1},\ldots,a_N) \Pi(a_1^{-1},\ldots,a_{N-m}^{-1}; a_{N-n+1},\ldots,a_N)}.
\end{align*}
Because the bound holds for any finite $S$, we may replace $S$ by $\Sig_{N-m}$ in the above, and also replace $P_{-\eta}(a_1,\ldots,a_{N-m})$ by $P_\eta(a_1^{-1},\ldots,a_{N-m}^{-1})$ by \Cref{lem:basic_signature_func_properties}. Then the above becomes
\begin{multline*}
    \sum_{\eta \in \Sig_{N-m}} d_{\l,\eta}^\nu \frac{P_{\eta}(a_1^{-1},\ldots,a_{N-m}^{-1})P_\nu(a_{N-n+1},\ldots,a_N)}{P_\l(a_{N-n+1},\ldots,a_N) \Pi(a_1^{-1},\ldots,a_{N-m}^{-1}; a_{N-n+1},\ldots,a_N)} \\ = \frac{Q_{\nu/\l}(a_1^{-1},\ldots,a_{N-m}^{-1})P_\nu(a_{N-n+1},\ldots,a_N)}{P_\l(a_{N-n+1},\ldots,a_N) \Pi(a_1^{-1},\ldots,a_{N-m}^{-1}; a_{N-n+1},\ldots,a_N)}
\end{multline*}
by definition of the coefficients $d_{\l,\eta}^\nu$. Because 
\begin{equation*}
    \sum_{\nu \in \Sig_n} \frac{Q_{\nu/\l}(a_1^{-1},\ldots,a_{N-m}^{-1})P_\nu(a_{N-n+1},\ldots,a_N)}{P_\l(a_{N-n+1},\ldots,a_N) \Pi(a_1^{-1},\ldots,a_{N-m}^{-1}; a_{N-n+1},\ldots,a_N)} = 1
\end{equation*}
by \Cref{lem:asym_cauchy}, the inequalities 
\begin{equation*}
    \lim_{D \to \infty} \mcj(\nu)  \geq \frac{Q_{\nu/\l}(a_1^{-1},\ldots,a_{N-m}^{-1})P_\nu(a_{N-n+1},\ldots,a_N)}{P_\l(a_{N-n+1},\ldots,a_N) \Pi(a_1^{-1},\ldots,a_{N-m}^{-1}; a_{N-n+1},\ldots,a_N)} 
\end{equation*}
must all be equalities, completing the proof.
\end{proof}

\begin{proof}[Proof of \Cref{prop:measure_convergence_branching_corners}]
The proof is very similar to that of \Cref{prop:measure_convergence_jacobi_and_cauchy}, so we will go through the argument but neglect some of the analytic details which are the same as before. Proceeding as in \Cref{prop:bet2}, we compute the limiting structure coefficients
\begin{equation*}
    \lim_{D \to \infty} c_{\l,(D[k],0[n-k])}^{(D[k]+\eta, \mu)}
\end{equation*}
for $\mu \in \Sig_{N-k}$. Define $\tc_{\mu,\eta}^\l$ by 
\begin{equation*}
    P_{\l/\mu}(x_1,\ldots,x_k) = \sum_{\eta \in \Sig_k} \tc_{\mu,\eta}^\l P_\eta(x_1,\ldots,x_k) 
\end{equation*}
(these are the dual Littlewood-Richardson coefficients and are related to the usual $c_{\mu,\eta}^\l$, see \cite[Ch. VI]{mac}, though we will not need this).
We have 
\begin{align*}
    P_\l(\bx) \frac{P_{\mu(D)}(\bx)}{\bx^{(D[k],0[n-k])}} &\to P_\l(\bx) \Pi(x_1^{-1},\ldots,x_k^{-1}; x_{k+1},\ldots,x_n) \\
    &= \sum_{\mu \in \Sig_{n-k}} \tc_{\mu,\eta}^\l P_\eta(x_1,\ldots,x_)P_\mu(x_{k+1},\ldots,x_n) \Pi(x_1^{-1},\ldots,x_k^{-1}; x_{k+1},\ldots,x_n).
\end{align*}
Likewise we have
\begin{equation*}
    \frac{P_{(D[k]-\mu, \mu)}(\bx)}{\bx^{(D[k],0[n-k])}} \to P_{\eta}(x_1,\ldots,x_k)P_\mu(x_{k+1},\ldots,x_n)\Pi(x_1^{-1},\ldots,x_k^{-1}; x_{k+1},\ldots,x_n).
\end{equation*}
Hence by the same argument as before, 
\begin{equation*}
    \lim_{D \to \infty} c_{\l,(D[k],0[n-k])}^{(D[k]+\eta, \mu)} = \tc_{\mu,\eta}^\l.
\end{equation*}
We thus have, again using positivity of all of the structure coefficients and specialized Macdonald polynomials, that
\begin{align*}
    \lim_{D \to \infty} \mbr(\mu) &\geq \sum_{\eta \in \Sig_k} \lim_{D \to \infty} c_{\l,(D[k],0[n-k])}^{(D[k]+\eta, \mu)} \frac{P_{(D[k]+\eta, \mu)}(\ba)/\ba^{(D[k],0[N-k])}}{P_\l(\ba) P_{(D[k],0[n-k])}(\ba)/\ba^{(D[k],0[N-k])}} \\
    &= \sum_{\eta \in \Sig_k} \tc_{\mu,\eta}^\l \frac{P_{\eta}(a_1,\ldots,a_k)P_\mu(a_{k+1},\ldots,a_n)}{P_\l(a_1,\ldots,a_n)}  \\
    &= \frac{P_{\l/\mu}(a_1,\ldots,a_k)P_\mu(a_{k+1},\ldots,a_n)}{P_\l(a_1,\ldots,a_n)}.
\end{align*}
These sum to $1$ by the branching rule, so the inequalities are equalities, completing the proof.
\end{proof}

\section{An exact sampling algorithm, and the LLN and CLT for the product process}\label{sec:4}

The plan of this section is as follows. In \Cref{subsec:rmt_lln_clt} we state a general law of large numbers and functional central limit theorem for Hall-Littlewood processes, \Cref{thm:general_hl_lln_clt}, and deduce the LLN and CLT for matrix products \Cref{thm:lln_and_func_clt_rmt_version} from it. We spend the remainder of the section proving \Cref{thm:general_hl_lln_clt}. In \Cref{subsec:sampling} we introduce the random sampling algorithm for Hall-Littlewood processes with one principal specialization $1,t,\ldots,t^{n-1}$ by a PushTASEP-like particle system. In \Cref{subsec:non-interacting_intro} we introduce a simpler variant of this particle system which is easier to analyze asymptotically, and show that the two may be coupled with small error. In \Cref{subsec:non-interacting_analysis} we complete the proof by analyzing this particle system. In \Cref{subsec:lyapunov} we prove the universality of Lyapunov exponents stated earlier as \Cref{thm:lyapunov}.

\subsection{Asymptotics of products of random matrices.}\label{subsec:rmt_lln_clt} 

Recall the main result.

\llncltrmt*

In view of \Cref{cor:product_gives_hl_proc}, this is a special case of the result below.

\begin{thm}\label{thm:general_hl_lln_clt}
Fix the Hall-Littlewood parameter $t \in (0,1)$, and $n \in \Z_{>0}$. Let $x_1,x_2,\ldots \in (\delta,1-\delta)$ for some $\delta>0$, and let $\hx_i = (x_i,tx_i,\ldots,t^{m_i-1}x_i)$ be collections of variables in $t$-geometric progression, possibly infinite, for each $i$. Let $(\l(1),\l(2),\ldots)$ be an infinite sequence of random signatures whose marginals are given by a Hall-Littlewood process,
\begin{equation*}
    \Pr(\l(1) = \l^{(1)},\ldots,\l(N) = \l^{(N)}) = \frac{Q_{\l^{(N)}/\l^{(N-1)}}(\hx_N) \cdots Q_{\l^{(2)}/\l^{(1)}}(\hx_2)Q_{\l^{(1)}}(\hx_1) P_{\l_N}(1,\ldots,t^{n-1})}{\Pi(1,\ldots,t^{n-1}; \hx_1,\ldots,\hx_N)}.
\end{equation*}

Then we have the following strong law of large numbers. For each $i=1,\ldots,n$,
\begin{equation}\label{eq:lln}
    \frac{\l_i(k)}{\sum_{j=1}^k \sum_{\ell=0}^{m_j-1} \frac{t^{i+\ell-1}x_j(1-t)}{(1-t^{i+\ell}x_j)(1-t^{i+\ell-1}x_j)}} \to 1 \text{       a.s.  as $k \to \infty$}.
\end{equation}
We also have the following functional central limit theorem. Let 
\begin{equation*}
\bl_i(k) := \l_i(k) - \sum_{j=1}^k \sum_{\ell=0}^{m_j-1} \frac{t^{i+\ell-1}x_j(1-t)}{(1-t^{i+\ell}x_j)(1-t^{i+\ell-1}x_j)}.    
\end{equation*} 
Let $f_{\bl_i,k}$ be the random element of $C[0,1]$ defined as follows: set $f(0)=0$ and
\begin{equation*}
        (f_{\bl_i,k}(1/k),f_{\bl_i,k}(2/k),\ldots,f_{\bl_i,k}(1)) = \frac{1}{\sqrt{\sum_{j=1}^k \sum_{\ell=0}^{m_j-1} \frac{t^{i+\ell-1}x_j(1-t)(1-t^{2i+2\ell-1}x_j^2)}{(1-t^{i+\ell-1}x_j)^2(1-t^{i+\ell}x_j)^2}}}(\bl_i(1),\ldots,\bl_i(k)),
\end{equation*}
then linearly interpolate from these values on each interval $[\ell/k,(\ell+1)/k]$. Then as $k \to \infty$, the $n$-tuple of random functions $(f_{\bl_1,k},\ldots,f_{\bl_n,k})$ converges in law in the sup norm topology to $n$ independent standard Brownian motions.
\end{thm}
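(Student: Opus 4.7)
The plan, following the section roadmap, proceeds in three steps. \emph{Step 1 (sampling algorithm).} First I realize the Hall--Littlewood process as the marginals of an explicit Markov particle system on $n$-tuples in $\Sig_n$. Using the multiplicativity of the skew $Q$-functions in their arguments, the $j$-th update, corresponding to the specialization $\hx_j=(x_j,tx_j,\ldots,t^{m_j-1}x_j)$, decomposes into $m_j$ elementary one-variable updates. Each elementary update is described concretely by the Hall--Littlewood branching formula of \Cref{prop:branching_hl} and translates, upon viewing signatures as configurations of $n$ particles on $\Z$, into a PushTASEP-like move: reading particles from smallest part to largest, each attempts to jump to the right by a $t$-geometric amount, and if it collides with the particle immediately above it, the excess jump is transferred to that neighbor.

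\emph{Step 2 (non-interacting coupling).} Next I introduce the auxiliary process in which each of the $n$ particles evolves as an independent random walk, with particle $i$ using at elementary step $(j,\ell)$ the same $t$-geometric increment it would receive in Step 1 were no collision possible. A direct computation, essentially the telescoping identity
\begin{equation*}
    \sum_{\ell=0}^{m_j-1}\left(\frac{1}{1-t^{i+\ell-1}x_j}-\frac{1}{1-t^{i+\ell}x_j}\right) = \sum_{\ell=0}^{m_j-1}\frac{t^{i+\ell-1}x_j(1-t)}{(1-t^{i+\ell-1}x_j)(1-t^{i+\ell}x_j)},
\end{equation*}
identifies the mean of the total increment of particle $i$ at step $j$ with the drift appearing in \eqref{eq:lln}, and an analogous computation matches the variance to the normalization used in the definition of $f_{\bl_i,k}$. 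The true and auxiliary processes are then coupled by drawing one common source of randomness and activating the push rule only when a collision actually occurs, so that the discrepancy between the two configurations is bounded by the cumulative amount of mass transferred in collisions.

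\emph{Step 3 (asymptotic analysis).} In the non-interacting model, each coordinate $\bl_i(k)$ is a sum of independent $t$-geometric random variables whose tails are uniformly controlled by $x_j\in(\delta,1-\delta)$; hence Kolmogorov's strong law immediately yields \eqref{eq:lln} and $n$ separate applications of Donsker's invariance principle for independent arrays give convergence of $(f_{\bl_1,k},\ldots,f_{\bl_n,k})$ to $n$ standard Brownian motions. The independence of the $n$ limiting motions is automatic by construction, as the auxiliary increments for different values of $i$ are independent. The only point at which the interaction must be revisited is to show that the coupling discrepancy is a.s.\ $o(\sqrt k)$, and this is the main technical obstacle. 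It reduces to proving that the gap $\l_i(k)-\l_{i+1}(k)$ in the auxiliary process grows linearly in $k$ almost surely, so that collisions are (essentially) confined to a finite initial window; this follows from strict monotonicity of the limiting drifts in $i$, the uniform lower bound on $x_j$, and a Borel--Cantelli argument exploiting the uniform exponential tails of the increments. Once this estimate is in hand, all three asymptotic statements transfer from the non-interacting model to the true Hall--Littlewood process.
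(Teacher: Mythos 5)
Your proposal follows the same three-step architecture as the paper (sampling algorithm $\to$ non-interacting coupling $\to$ Kolmogorov SLLN + Donsker), so the overall route is sound. The gap is in Step~3, where you reduce everything to showing the gaps in the auxiliary (non-interacting) process $v$ grow linearly, and then assert that ``collisions are (essentially) confined to a finite initial window, so the coupling discrepancy is $o(\sqrt k)$.'' This inference is the entire technical crux and does not follow as directly as you state. Pushes in the interacting process $\l$ are governed by the gaps in $\l$, not in $v$, and $\l$ differs from $v$ precisely by the accumulated pushes; a particle that has been pushed forward is now \emph{closer} to the next one, so further pushes may happen on a set of times that is not a priori controlled by the overlap times of $v$. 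Arguing ``$v$-gaps grow, hence $\l$-pushes stop'' is circular unless one already knows $\l-v$ is bounded.

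The paper breaks this circularity with a genuine additional idea that your sketch omits: a sequence of hybrid dynamics $\l^{(1)}=v,\l^{(2)},\ldots,\l^{(n)}=\l$ in which only the bottom $j$ particles interact in $\l^{(j)}$. One proves the monotone interlacing inequalities $\l^{(j+1)}_{n-j}(k)\ge\l^{(j)}_{n-j}(k)$ and $\l^{(j+1)}_{n-j+i}(k)\le\l^{(j)}_{n-j+i}(k)$ for $1\le i\le j$, a ``conservation of momentum'' identity
\begin{equation*}
\sum_{i=1}^j\bigl(\l^{(j)}_{n-j+i}(k)-\l^{(j+1)}_{n-j+i}(k)\bigr)=\l^{(j+1)}_{n-j}(k)-\l^{(j)}_{n-j}(k),
\end{equation*}
and one applies the overlap lemma with a \emph{random} threshold $B$ equal to the already-established sup of the error at the previous stage; the ``for all $B$'' quantifier in Lemma~\ref{lem:finitely_many_overlaps} is placed inside the probability-one event precisely so this works. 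This induction is what turns the $v$-estimates into the a.s.\ bound $\sup_k|\l_i(k)-v_i(k)|<\infty$ (which is stronger than the $o(\sqrt k)$ you aim for, and is what the paper actually establishes). Without some version of this argument, your Step~3 does not close. Two minor further points: the increments are not simply $t$-geometric but truncated differences of two geometrics, and the paper runs Borel--Cantelli via a fourth-moment Chebyshev bound rather than exponential tails, though the latter would also work here.
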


\begin{rmk}
Though we have avoided it for the sake of simplicity, it is possible to define the product process more generally, allowing for nonsquare matrices. In the usual archimedean case this is done in \cite[Appendix A]{ahn2019fluctuations}, and the $p$-adic case is exactly the same.
\end{rmk}

\subsection{Sampling algorithm for Hall-Littlewood processes with one principal specialization.}\label{subsec:sampling}

In what follows, we will identify signatures $\l \in \Sig_n$ with configurations of $n$ particles on $\Z$ by placing $m_i(\l)$ particles at each position $i \in \Z$. Each particle corresponds to a part of $\l$, and we will refer to them as the $1^{\text{st}},\ldots,n\tth$ particle or `particle $1,\ldots,$ particle $n$' to reflect this, even when some are in the same location. In this numbering, particle $j$ will correspond to a particle at position $\l_j$. 

\begin{defi}\label{def:interacting_insertion}
Define the `insertion map' $\iota: \Z_{\geq 0}^n \times \Sig_n \to \Sig_n$ by defining $\iota(a_1,\ldots,a_n;\l)$ as follows. First assign to each particle $j$ an `impulse' $a_j$. Particle $n$ then moves to the right until it has either moved $a_n$ steps or encountered particle $n-1$. If it encounters particle $n-1$, then it is `blocked' by particle $n-1$ and donates the remainder $a_n - (\l_{n-1}-\l_n)$ of its impulse to particle $n-1$. Particle $n-1$ now has impulse $a_{n-1} + \max(0, a_n - (\l_{n-1}-\l_n))$, and moves in the same manner, possibly donating some of its impulse to particle $n-2$; all further particle evolve in the same manner.
\end{defi}

\begin{example}\label{ex:particle_evolution}
To compute $\iota(1,4,2; (5,3,-1)) = (8,5,1)$ the particles jump as above. The numbers above the particles represent their impulses; note that impulse-donation from particle $2$ to particle $1$ occurs at the third step shown.
\vspace{3ex}
\begin{center}
\begin{tikzpicture}[
dot/.style = {circle, fill, minimum size=#1,
              inner sep=0pt, outer sep=0pt},
dot/.default = 6pt  
                    ] 
\def\h{0}
\draw[latex-latex] (-3.5,\h) -- (9.5,\h) ; 
\foreach \x in  {-3,-2,-1,0,1,2,3,4,5,6,7,8,9} 
\draw[shift={(\x,\h)},color=black] (0pt,3pt) -- (0pt,-3pt);
\foreach \x in {-3,-2,-1,0,1,2,3,4,5,6,7,8,9} 
\draw[shift={(\x,\h)},color=black] (0pt,0pt) -- (0pt,-3pt) node[below] 
{$\x$};
\node[dot,label=above:$2$] at (-1,\h) {};
\node[dot,label=above:$4$] at (3,\h) {};
\node[dot,label=above:$1$] at (5,\h) {};

\def\h{-2}
\draw[latex-latex] (-3.5,\h) -- (9.5,\h) ; 
\foreach \x in  {-3,-2,-1,0,1,2,3,4,5,6,7,8,9} 
\draw[shift={(\x,\h)},color=black] (0pt,3pt) -- (0pt,-3pt);
\foreach \x in {-3,-2,-1,0,1,2,3,4,5,6,7,8,9} 
\draw[shift={(\x,\h)},color=black] (0pt,0pt) -- (0pt,-3pt) node[below] 
{$\x$};
\node[dot,label=above:$0$] at (1,\h) {};
\node[dot,label=above:$4$] at (3,\h) {};
\node[dot,label=above:$1$] at (5,\h) {};
\draw [->] (-1,\h) to [out=30,in=150] ((1-.15,\h+.15);

\def\h{-4}
\draw[latex-latex] (-3.5,\h) -- (9.5,\h) ; 
\foreach \x in  {-3,-2,-1,0,1,2,3,4,5,6,7,8,9} 
\draw[shift={(\x,\h)},color=black] (0pt,3pt) -- (0pt,-3pt);
\foreach \x in {-3,-2,-1,0,1,2,3,4,5,6,7,8,9} 
\draw[shift={(\x,\h)},color=black] (0pt,0pt) -- (0pt,-3pt) node[below] 
{$\x$};
\node[dot,label=above:$0$] at (1,\h) {};
\node[dot,label=above:$0$] at (5-.25,\h) {};
\node[dot,label=above:$3$] at (5,\h) {};
\draw [->] (3,\h) to [out=30,in=150] ((5-.15-.25,\h+.15);

\def\h{-6}
\draw[latex-latex] (-3.5,\h) -- (9.5,\h) ; 
\foreach \x in  {-3,-2,-1,0,1,2,3,4,5,6,7,8,9} 
\draw[shift={(\x,\h)},color=black] (0pt,3pt) -- (0pt,-3pt);
\foreach \x in {-3,-2,-1,0,1,2,3,4,5,6,7,8,9} 
\draw[shift={(\x,\h)},color=black] (0pt,0pt) -- (0pt,-3pt) node[below] 
{$\x$};
\node[dot,label=above:$0$] at (1,\h) {};
\node[dot,label=above:$0$] at (5,\h) {};
\node[dot,label=above:$0$] at (8,\h) {};
\draw [->] (5+.25,\h) to [out=30,in=150] ((8-.15,\h+.15);
\end{tikzpicture}
\end{center}
\vspace{3ex}
\end{example}

It is obvious from \Cref{def:interacting_insertion} that $\l \prec_Q \iota(a_1,\ldots,a_n; \l)$ for any $\ba \in \Z_{\geq 0}^n$. It is also not hard to check by induction on $i$ that one may equivalently define $\iota$ by defining the $(n-i)\tth$ part
\begin{equation}\label{eq:iota_uniform_def}
    \iota(a_1,\ldots,a_n; \l)_{n-i} = \min(\l_{n-i-1}, \max(\l_{n-i}+a_{n-i},\l_{n-i+1}+a_{n-i}+a_{n-i+1},\ldots,\l_n+a_{n-i}+\ldots+a_n))
\end{equation}
for each $i=0,\ldots,n-1$, where we formally take $\l_0 = \infty$ in the edge case $i=n-1$. 

We now use the insertion $\iota$ with random input $a_1,\ldots,a_n$ to define random signatures, which we will show in Proposition \ref{prop:sampling_alg} yields the `Cauchy' Markov transition dynamics of \Cref{def:mac_cauchy_dynamics}. First we define the measures which will be the distributions of the $a_i$.

\begin{defi}\label{def:geom_diff_rvs}
Let $G_x$ be the measure on $\Z_{\geq 0}$ which is the distribution of $\max(X-T,0)$ where $X \sim \Geom(x), T \sim \Geom(t)$. Explicitly,
\begin{equation}\label{eq:trunc_geom_formula}
    G_x(\ell) = \frac{1-x}{1-t x}(1-t)^{\bbone(\ell>0)}x^\ell.
\end{equation}
Equivalently $G_x$ is defined by the generating function 
\begin{equation}\label{eq:trunc_geom_pgf}
    \sum_{\ell \geq 0} G_x(\ell) z^\ell = \frac{1-x}{1-t x} \frac{1-t x z}{1-x z} = \frac{\Pi_{(0,t)}(z;x)}{\Pi_{(0,t)}(1;x)}.
\end{equation}
\end{defi}

\begin{prop}\label{prop:sampling_alg}
For $0<x<1$, let $X_1,\ldots,X_n$ be independent with $X_i \sim G_{xt^{i-1}}$. Let $\l,\nu \in \Sig_n$ with $\l \prec_Q \nu$. Then
\begin{align}\label{eq:sampling_alg}
    \Pr(\iota(X_1,\ldots,X_n;\l) = \nu) &= \frac{1-x}{1-t^nx} \prod_{j: m_j(\l)=m_j(\nu)+1}(1-t^{m_j(\l)}) \prod_{i=1}^n (xt^{i-1})^{\nu_i-\l_i} \\
    &=  \frac{Q_{\nu/\l}(x)P_\nu(1,\ldots,t^{n-1})}{P_\l(1,\ldots,t^{n-1}) \Pi_{(0,t)}(x;1,\ldots,t^{n-1})}.\label{eq:sampling_alg2}
\end{align}
\end{prop}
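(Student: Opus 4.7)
We verify the two equalities \eqref{eq:sampling_alg} and \eqref{eq:sampling_alg2} separately. The coincidence of the two right-hand sides is a direct algebraic unfolding of the formulas from \Cref{subsec:hl}: \Cref{prop:branching_hl} gives $Q_{\nu/\l}(x;0,t) = x^{|\nu|-|\l|}\prod_{i:\,m_i(\nu)=m_i(\l)+1}(1-t^{m_i(\nu)})$; \Cref{prop:hl_principal_formulas} (extended from $\Sig_n^+$ to $\Sig_n$ via \Cref{lem:basic_signature_func_properties}) expresses the principal specializations $P_\l(1,\ldots,t^{n-1};0,t)$ and $P_\nu(1,\ldots,t^{n-1};0,t)$ in terms of $t^{n(\cdot)}$ and $\prod_i(t;t)_{m_i(\cdot)}$; and $\Pi_{(0,t)}(x;1,\ldots,t^{n-1})=(1-t^nx)/(1-x)$ by a telescoping product. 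Under $\l\prec_Q\nu$ one checks $|m_i(\l)-m_i(\nu)|\leq 1$ at every $i$, so each position of disagreement contributes either $(1-t^{m_i(\l)})$ or $1/(1-t^{m_i(\nu)})$ to the ratio of $(t;t)_{m_i}$ products, and the equality of the two right-hand sides follows by direct cancellation.

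The main computation is to evaluate $\Pr(\iota(X_1,\ldots,X_n;\l)=\nu)$. Fix $\nu$ with $\l\prec_Q\nu$ and call an index $i\in\{2,\ldots,n\}$ \emph{blocked} when $\nu_i=\l_{i-1}$; let $1=j_1<j_2<\cdots<j_m$ be the unblocked indices and set $j_{m+1}:=n+1$. Unwinding the donation recursion in \Cref{def:interacting_insertion} (or reading off \eqref{eq:iota_uniform_def} directly) yields that $\iota(X_1,\ldots,X_n;\l)=\nu$ if and only if, for each cluster $\ell\in\{1,\ldots,m\}$,
\begin{equation*}
\sum_{k=j_\ell}^{j_{\ell+1}-1}X_k=\nu_{j_\ell}-\l_{j_{\ell+1}-1}, \qquad \sum_{k=i}^{j_{\ell+1}-1}X_k\geq \l_{i-1}-\l_{j_{\ell+1}-1}\ \text{for } j_\ell<i\leq j_{\ell+1}-1.
\end{equation*}
Donations never cross cluster boundaries (at each boundary the unblockedness of $j_{\ell+1}$ forces a strict drop $\l_{j_{\ell+1}-1}>\l_{j_{\ell+1}}$), so these constraints on disjoint blocks of independent $X_k$ decouple, $\Pr(\iota=\nu)$ factorizes over clusters, and each value appearing in $\l$ lies entirely in a single cluster (so within-cluster and global multiplicities agree).

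For each cluster, substitute partial sums $S_i=\sum_{k=i}^{j_{\ell+1}-1}X_k$, expand each $G_{xt^{k-1}}(X_k)$ via \eqref{eq:trunc_geom_formula}, telescope the $(1-xt^{k-1})/(1-xt^k)$ prefactors, and use $\sum_k(k-1)X_k=\sum_{i\geq j_\ell+1}S_i$ to reduce the cluster factor to an explicit prefactor times
\begin{equation*}
\sum_{S}(1-t)^{|\{i:\,S_i>S_{i+1}\}|}\,t^{\sum_{i\geq j_\ell+1}S_i}
\end{equation*}
over admissible weakly decreasing sequences $S_{j_\ell+1}\geq\cdots\geq S_{j_{\ell+1}-1}\geq 0$ respecting the lower bounds from the blocking conditions. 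Iterating the telescoping identity $(1-t)\sum_{S=a}^{b-1}t^S+t^b=t^a$ from the innermost index outward causes the dependence on the next-outer variable to collapse at each step, leaving the cluster sum equal to $t^{\sum_{k\geq 2}(k-1)(\l_{j_\ell+k-2}-\l_{j_\ell+k-1})}\,(1-t^{m_{\l_{j_{\ell+1}-1}}(\l)})$ when $\nu_{j_\ell}>\l_{j_{\ell+1}-1}$ (the innermost run of equal $\l$-values supplies the $(1-t^m)$ factor) and equal to $1$ when $\nu_{j_\ell}=\l_{j_{\ell+1}-1}$ (no motion in the cluster). Multiplying over clusters, the prefactors telescope to $(1-x)/(1-xt^n)$, the $x$- and $t$-powers assemble into $\prod_i(xt^{i-1})^{\nu_i-\l_i}$, and the cluster-wise factors combine to reproduce $\prod_{j:m_j(\l)=m_j(\nu)+1}(1-t^{m_j(\l)})$ (using that the lost value $\l_{j_{\ell+1}-1}$ of a stationary cluster coincides with its gained value and thus produces no net multiplicity change). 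The per-cluster telescoping evaluation, together with the multiplicity bookkeeping across clusters, is the main technical obstacle.
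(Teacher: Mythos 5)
Your route is genuinely different from the paper's and, as far as I can check, produces the right answer, though you leave the central per-cluster computation unfinished. The paper organizes the calculation around the \emph{initial} configuration $\l$: writing $\l=(a_1[k_1],\ldots,a_r[k_r])$ as clumps of equal starting positions, it tracks the escaping particle $N_i$ of each clump and runs a right-to-left conditional recursion with two cases per step (the escaping particle stops short of the next clump, or reaches it and donates its excess impulse); the key probabilistic observation there is that $\max(\Geom-\Geom,0)$ conditioned to overshoot a threshold is again geometric, which closes the recursion and makes the $(1-t^{n-k_r}x)$ prefactors telescope across both cases. You instead organize around the \emph{output} $\nu$, partitioning indices into clusters by the blocking pattern $\{i:\nu_i=\l_{i-1}\}$ and factorizing the event $\{\iota(X;\l)=\nu\}$ over disjoint blocks of the $X_k$. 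These groupings genuinely differ: in Example~\ref{ex:particle_evolution} the paper sees three singleton clumps $\{5\},\{3\},\{-1\}$ while your clusters are $\{1,2\}$ and $\{3\}$. Your approach trades the two-case conditional recursion for a single unconditional factorization plus a per-cluster combinatorial sum; the paper's avoids that sum but relies on the geometric memorylessness of impulse donation.

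Two places need more care than your sketch gives. First, ``donations never cross cluster boundaries'' is not a consequence of the strict drop $\l_{j_{\ell+1}-1}>\l_{j_{\ell+1}}$ alone — that by itself does not prevent the next cluster's escaping particle from reaching position $\l_{j_{\ell+1}-1}$. What you actually use is that fixing $\iota(X;\l)_{j_{\ell+1}}=\nu_{j_{\ell+1}}<\l_{j_{\ell+1}-1}$ pins $\max_{i\geq j_{\ell+1}}\bigl(\l_i+X_{j_{\ell+1}}+\cdots+X_i\bigr)=\nu_{j_{\ell+1}}$ via \eqref{eq:iota_uniform_def}, after which a downward induction on clusters shows both inclusions between $\{\iota(X;\l)=\nu\}$ and your product of per-cluster partial-sum events; this is an honest argument but not the one you wrote. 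Second, the per-cluster telescoping and multiplicity bookkeeping, which you flag as the main obstacle, really are the bulk of the proof: I checked your claimed cluster sum $t^{\sum_{k\geq2}(k-1)(\l_{j_\ell+k-2}-\l_{j_\ell+k-1})}(1-t^{m_{\l_{j_{\ell+1}-1}}(\l)})$ against direct computations for clusters of sizes two and three (including one with $\l_j>\l_{j+1}$) and it agrees, and the global reassembly of $x$- and $t$-powers to $\prod_i(xt^{i-1})^{\nu_i-\l_i}$ is a short telescoping check, but the iterated identity $(1-t)\sum_{S=a}^{b-1}t^S+t^b=t^a$ must be carried through explicitly — tracking the indicators $\bbone(X_k>0)$ and the constraints $S_i\geq\l_{i-1}-\l_{j_{\ell+1}-1}$ at the same time — before this is a complete argument.
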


\begin{proof}
We let $\Pr_x(\l \to \nu) := \Pr(\iota(X_1,\ldots,X_n;\l) = \nu)$.
The equality of the RHS of \eqref{eq:sampling_alg} with \eqref{eq:sampling_alg2} follows by \Cref{prop:hl_principal_formulas} while the first requires proof. We will explicitly compute $\Pr_x(\l \to \nu)$ from the definition of $\iota$.

Let $\l = (a_1[k_1],\ldots,a_r[k_r])$, where the $a_i$ are distinct, $k_i$ are integers $\geq 1$ with $\sum_i k_i = n$. To avoid cumbersome notation for edge cases, we formally take $\l_0 = a_0 = \infty$ in some formulas below.

It is clear from the definition that $\Pr_x(\l \to \nu)$ is nonzero only if $\l \prec \nu$. By interlacing, only the rightmost particle in the group of $k_i$ particles at location $a_i$ can exit to the right; the location where it stops is $\iota(X_1,\ldots,X_n;\l)_{n-(k_r+\ldots+k_i)+1}$.

Let us define random variables $N_i, 1 \leq i \leq r$, to be the location of the particle that jumps out of the $i\tth$ clump after its jump (if no particle leaves the clump, then $N_i = a_i$. Explicitly, $N_i = \iota(X_1,\ldots,X_n;\l)_{n-(k_r+\ldots+k_i)+1}$, and so
\begin{equation*}
    \Pr(N_1 = \nu_1,N_2 = \nu_{k_1+1},\ldots,N_r = \nu_{n-k_r+1}) = \Pr_x(\l \to \nu)
\end{equation*}
for any $\nu \succ_Q \l$. We will explicitly compute the joint distribution of the $N_i$, starting with the distribution of $N_r$.

By \Cref{def:interacting_insertion}, $N_r$ has distribution
\begin{equation*}
    \min(a_{r-1}, a_r + X_{n-k_r+1} + \ldots + X_n).
\end{equation*}
Using the probability generating function \eqref{eq:trunc_geom_pgf}, we have that 
\begin{align}
    \Pr(X_{n-k_r+1} + \ldots + X_n = \ell) &= \left(\prod_{i=n-k_r+1}^n\frac{1-t^{i-1}x}{1-t^i x} \frac{1-t^i x z}{1-t^{i-1}x z}\right)[z^\ell] \\
    &=\left(\frac{1-t^{n-k_r}x}{1-t^n x} \frac{1-t^n x z}{1-t^{n-k_r}x z}\right)[z^\ell].
\end{align}
Expanding this out, we have
\begin{equation}\label{eq:leftnudist}
    \Pr(N_r = a_r+\ell) = \begin{cases}
    \frac{1-t^{n-k_r}x}{1-t^n x} & \ell = 0 \\
    \frac{1-t^{n-k_r}x}{1-t^n x}(1-t^{k_r})(xt^{n-k_r})^{\ell} & 0 < \ell < a_{r-1}-a_r \\
    \frac{1}{1-t^n x}(1-t^{k_r}) (xt^{n-k_r})^\ell & \ell = a_{r-1}-a_r
    \end{cases}.
\end{equation}
Note that this formula still makes sense when $r=1$, as the last case $\ell = \infty$ has probability $0$.

Now let us find the distribution of $N_{r-1}$. Its distribution, conditional on $N_r$, depends on whether $N_r < a_{r-1}$ or $N_r=a_{r-1}$. 

\textbf{Case I: $N_r < a_{r-1}$.}

In this case, we may compute the conditional distribution of $N_r$ exactly as before, obtaining
\begin{equation}\label{eq:case1nudist}
    \Pr(N_{r-1} = a_{r-1}+\ell) = \begin{cases}
    \frac{1-t^{n-k_r-k_{r-1}}x}{1-t^{n-k_r} x} & \ell = 0 \\
    \frac{1-t^{n-k_r-k_{r-1}}x}{1-t^{n-k_r} x}(1-t^{k_{r-1}})(xt^{n-k_r-k_{r-1}})^{\ell} & 0 < \ell < a_{r-2}-a_{r-1} \\
    \frac{1}{1-t^{n-k_r} x}(1-t^{k_{r-1}}) (xt^{n-k_r-k_{r-1}})^\ell & \ell = a_{r-2}-a_{r-1}
    \end{cases}.
\end{equation}

\textbf{Case II: $N_r = a_{r-1}$.}

In this case, the computation is different: Because $N_r$ may donate some of its jump, Definition \ref{def:interacting_insertion} yields that $N_{r-1}$ has distribution
\begin{equation*}
    \min(a_{r-2}, a_{r-1} + Y + X_{n-k_r-k_{r-1}+1} + \ldots + X_{n-k_r}),
\end{equation*}
where $Y$ comes from the possible jump-donation of $N_r$ and has distribution given by
\begin{equation}\label{eq:defY}
    \Pr(Y=\ell) = \Pr((X_{n-k_r+1} + \ldots + X_n) - (a_{r-1}-a_r) = \ell | X_{n-k_r+1} + \ldots + X_n  \geq a_{r-1}-a_r).
\end{equation}
This looks overly complicated, but let us back up and see what it all means. As we noted before, $X_{n-k_r+1} + \ldots + X_n$ has probability generating function
\begin{equation*}
    \frac{1-t^{n-k_r}x}{1-t^n x} \frac{1-t^n x z}{1-t^{n-k_r}x z},
\end{equation*}
hence
\begin{equation*}
    X_{n-k_r+1} + \ldots + X_n \sim \max(\Geom(t^{n-k_r}x) - \Geom(t^{k_r}), 0).
\end{equation*}
How, in general, would one sample $Z \sim \Geom(x)-\Geom(w)$? A simple way is to take two coin with probability $x$ and $w$ of heads respectively, and keep flipping them until one comes up tails, then see how many additional flips it takes before the other comes up tails--call this (random) number $\ell$. If the $w$-coin came up tails first, then $Z=\ell$; if the $x$-coin came up tails first, $Z=-\ell$. From this description it is clear that if we condition on $Z \geq 1$, or indeed $Z \geq c$ for any $c \geq 1$, we are conditioning on the event that the $w$-coin comes up tails first \emph{and} the $x$-coin comes up heads for at least $c$ additional rounds. It is thus clear that the conditional distribution of $Z$, given $Z \geq c$, is $c+\Geom(x)$. 

Applying this to our above situation, we have that conditioning $X_{n-k_r+1} + \ldots + X_n$ to be above some positive number, it will have a geometric distribution. Specifically,
\begin{equation*}
    \Pr((X_{n-k_r+1} + \ldots + X_n) - (a_{r-1}-a_r) = \ell | X_{n-k_r+1} + \ldots + X_n  \geq a_{r-1}-a_r) = (1-t^{n-k_r}x)(t^{n-k_r}x)^\ell.
\end{equation*}
Hence by \eqref{eq:defY}, $Y \sim \Geom(t^{n-k_r}x)$, so $Y$ has probability generating function $\frac{1-t^{n-k_r}x}{1-t^{n-k_r}xz}$. Thus $Y + X_{n-k_r-k_{r-1}+1} + \ldots + X_{n-k_r}$ has probability generating function
\begin{equation*}
    \frac{1-t^{n-k_r}x}{1-t^{n-k_r}xz} \cdot \left(\frac{1-t^{n-k_r-k_{r-1}}x}{1-t^{n-k_r} x} \frac{1-t^{n-k_r} x z}{1-t^{n-k_r-k_{r-1}}x z}\right) = \frac{1-t^{n-k_r-k_{r-1}}x}{1-t^{n-k_r-k_{r-1}}x z},
\end{equation*}
i.e. 
\begin{equation*}
    Y + X_{n-k_r-k_{r-1}+1} + \ldots + X_{n-k_r} \sim \Geom(t^{n-k_r-k_{r-1}}x).
\end{equation*}
Thus at last we have
\begin{equation}\label{eq:case2nudist}
    \Pr(N_{r-1} = a_{r-1}+\ell) = \begin{cases}
    (1-t^{n-k_r-k_{r-1}}x)(t^{n-k_r-k_{r-1}}x)^\ell  & 0 \leq \ell < a_{r-2}-a_{r-1} \\
    (t^{n-k_r-k_{r-1}}x)^\ell & \ell = a_{r-2}-a_{r-1}
    \end{cases},
\end{equation}
which concludes the computation of Case II.

A key feature of the distributions computed in \eqref{eq:leftnudist}, \eqref{eq:case1nudist} and \eqref{eq:case2nudist} is that the $1-t^{n-k_r} x$ term in \eqref{eq:leftnudist}, which appears only in the case $N_r < a_{r-1}$ (Case I), cancels with the $1-t^{n-k_r} x$ appearing in the computation \eqref{eq:case1nudist} for Case I; meanwhile, when $N_r = a_{r-1}$ (Case II), it appears neither in \eqref{eq:leftnudist} nor in \eqref{eq:case2nudist}. 

Together, \eqref{eq:leftnudist}, \eqref{eq:case1nudist} and \eqref{eq:case2nudist} imply the joint distribution
\begin{align*}
    &\Pr(N_r = a_r+\ell_1 \text{ and }N_{r-1} = a_{r-1}+\ell_2) \\
    &= 
    \frac{(1-t^{n-k_r-k_{r-1}}x)^{\bbone(\ell_2 < a_{r-2}-a_{r-1})}}{1-t^n x}(1-t^{k_r})^{\bbone(\ell_1>0)}(1-t^{k_{r-1}})^{\bbone(\ell_1 < a_{r-1}-a_r \text{ and }\ell_2 > 0)} \\ &\cdot (t^{n-k_r}x)^{\ell_1}(t^{n-k_r-k_{r-1}}x)^{\ell_2}\\
    &= \frac{(1-t^{n-k_r-k_{r-1}}x)^{\bbone(\ell_2 < a_{r-2}-a_{r-1})}}{1-t^n x}(1-t^{k_r})^{\bbone(m_{a_r}(\l)=m_{a_r}(\nu)+1)}(1-t^{k_{r-1}})^{\bbone(m_{a_{r-1}}(\l) = m_{a_{r-1}}(\nu)+1)}\\
    & \cdot (t^{n-k_r}x)^{\ell_1}(t^{n-k_r-k_{r-1}}x)^{\ell_2}
\end{align*}
But we see that the computation of the distribution of $N_{r-1}$ is exactly the same for any $N_i$. There is the same division into Case I and Case II depending on whether $N_{i+1}$ achieves its maximum, and the feature that the $1-t^{n-k_r} x$ terms cancel in both Case I and Case II is also the same. Hence these terms telescope, and we are left with 
\begin{equation*}
    \frac{1-t^{n-k_r-\ldots-k_1}x}{1-t^nx} = \frac{1-x}{1-t^n x},
\end{equation*}
where the $1-t^{n-k_r-\ldots-k_1}x$ appears because the last such term does not cancel. Hence continuing the above computation yields
\begin{align}
    &\Pr(N_r = \nu_{n-k_r+1},N_{r-1} = \nu_{n-k_r-k_{r-1}+1},\ldots,N_1 = \nu_1) \\
    &=\frac{1-x}{1-t^n x}\prod_{i=1}^r (1-t^{k_i})^{\bbone(m_{a_i}(\l)=m_{a_i}(\nu)+1)}\prod_{i=0}^{r-1}(t^{n-k_r-\ldots-k_{r-i}}x)^{\nu_{n-k_r-\ldots-k_{r-i}+1}-a_{r-i}} \\
    &= \frac{1-x}{1-t^nx} \prod_{j: m_j(\l)=m_j(\nu)+1}(1-t^{m_j(\l)}) \prod_{i=1}^n (xt^{i-1})^{\nu_i-\l_i},
\end{align}
concluding the proof.
\end{proof}

\begin{rmk}
Since the sum over $\nu$ of the LHS of \eqref{eq:sampling_alg} is clearly $1$, Proposition \ref{prop:sampling_alg} implies that the sum of the RHS of \eqref{eq:sampling_alg} is $1$, which gives a proof of the corresponding case of the skew Hall-Littlewood Cauchy identity (\Cref{lem:asym_cauchy}).
\end{rmk}

It is very important to note that the random variables $X_i$ above satisfy $\E[X_i] > \E[X_j]$ when $i < j$. This means that the $i\tth$ particle, which is already ahead of the $j\tth$ particle, is likely to pull even further ahead if one iterates the above dynamics. Empirically this may be seen in \Cref{fig:random_walks}. This observation is key to the proof of \Cref{thm:general_hl_lln_clt}, as it implies that while there may be some interactions between particles, as one iterates the above dynamics the particles should spread apart and interactions should not contribute to the limit. Hence by Donsker's theorem the rescaled fluctuations of the particles should look like independent Brownian motions. 

The rest of this section is devoted to making the above heuristic argument precise. We implement it by coupling the interacting particle dynamics of \Cref{prop:sampling_alg} to dynamics in which the particles do not interact at all, and showing that the error between the two is small in the limit.

\subsection{Coupling to non-interacting particle dynamics.}\label{subsec:non-interacting_intro}

Proposition \ref{prop:sampling_alg} gives an explicit sampling algorithm for Hall-Littlewood processes
\begin{equation*}
    \Pr(\l_1,\ldots,\l_N) = \frac{Q_{\l_N/\l_{N-1}}(x_N) \cdots Q_{\l_2/\l_1}(x_2)Q_{\l_1/(0[n])}(x_1) P_{\l_N}(1,\ldots,t^{n-1})}{\Pi(1,\ldots,t^{n-1}; x_1,\ldots,x_N)}.
\end{equation*}
\Cref{thm:lln_and_func_clt_rmt_version} and \Cref{thm:general_hl_lln_clt} treat Hall-Littlewood processes as above but with the variables $x_i$ replaced by geometric progressions $\hx_i$ (sometimes infinite), and we must extend our notation slightly to deal with these. We begin by setting up the appropriate probability space on which the random variables $X_i$ of Proposition \ref{prop:sampling_alg} can be defined in this more general setting.

\begin{defi}\label{def:generalized_variable_stuff}
A \emph{generalized variable} $\hx$ is a tuple $(x, tx,\ldots,t^{m-1}x)$ or $(x,tx,\ldots)$ in finite or infinite geometric progression with common ratio $t$. For a generalized variable, define probability spaces
\begin{equation*}
    \Omega_{\hx} = \begin{cases}
    (\Z_{\geq 0}^n)^m & \hx = (x,\ldots,t^{m-1}x) \\
    \{\bsomega = (\omega^{(1)},\omega^{(2)},\ldots) \in (\Z_{\geq 0}^n)^\infty: \text{ only finitely many $\omega^{(i)}$ nonzero}\} & \hx = (x,tx,\ldots)
    \end{cases}.
\end{equation*}
Recall the definition of the measure $G_x$ in Definition \ref{def:geom_diff_rvs}. Now define the measure $G_{\hx}$ on $\Omega_{\hx}$ by 
\begin{equation*}
    G_{\hx} = 
    \begin{cases}
    G_x \times \cdots \times G_{t^{m-1} x} & \hx = (x, tx,\ldots,t^{m-1}x) \\
    G_x \times G_{tx} \times \cdots & \hx = (x, tx, \ldots)
    \end{cases}.
\end{equation*}
\end{defi}

Two things must be justified in this definition. The first is that the infinite product measure $G_x \times G_{tx} \times \cdots$ on $(\Z_{\geq 0}^n)^\infty$ makes sense, which follows from the Kolmogorov extension theorem. The second is that this measure is actually supported on the subset $\Omega_{\hx}$, which follows from a standard Borel-Cantelli argument.

\begin{defi}\label{def:gen_var_stuff2}
We inductively define $\iota$ on $(\Z_{\geq 0}^n)^m, m > 1$ as follows. For $\omega_i \in \Z_{\geq 0}^n$, set
\begin{equation}\label{eq:iota_on_generalized_variables}
    \iota((\omega_1,\ldots,\omega_m);\l) := \iota(\omega_m;\iota((\omega_1,\ldots,\omega_{m-1});\l)),
\end{equation}
We define $\iota: \Omega_{\hx} \times \Sig_n \to \Sig_n$ as above when $\hx$ is a finite geometric progression, and when $\hx$ is an infinite geometric progression the definition readily extends because $\Omega_{\hx}$ consists of sequences with only finitely many nonzero $\omega_i \in \Z_{\geq 0}^n$. Given a sequence $\hx_1,\hx_2,\ldots$ of generalized variables as in Theorem \ref{thm:general_hl_lln_clt}, we will use the following notations.
\begin{itemize}
    \item $\Omega := \Omega_{\hx_1} \times \Omega_{\hx_2} \times \cdots$.
    \item $\bsomega  = (\omega^{(1)}, \omega^{(2)},\ldots)$ will denote an element of $\Omega$, with each $\omega^{(i)}$ denoting an element of $\Omega_{\hx_i}$.
\end{itemize}
\end{defi}

\begin{defi}\label{def:lambda_process}
Define the sequence $\l(0), \l(1),\ldots$ of random signatures on the probability space $\Omega$ of Definition \ref{def:gen_var_stuff2} by setting $\l(0) = (0[n])$ and inductively defining
\begin{equation*}
    \l(k,\bsomega) := \iota(\omega^{(k)}; \l(k-1,\bsomega))
\end{equation*}
for $\bsomega \in \Omega$, where $\iota$ is defined on $\omega^{(k)} \in \Omega_{\hx_k}$ via \eqref{eq:iota_on_generalized_variables}. We will usually omit the dependence on the element of the probability space $\Omega$ and simply write $\l(k)$.
\end{defi}

In other words, if $\hx_{k} = (x_k,\ldots,t^{m-1}x_k)$, then $\l(k+1)$ comes from $\l(k)$ by inserting random arrays as in Proposition \ref{prop:sampling_alg} with distributions corresponding to the variables $x_k,tx_k,\ldots,t^{m-1}x_k$.

We now define the non-interacting variant of the randomized insertion algorithm of Proposition \ref{prop:sampling_alg}, where each particle's movement is independent of the others. This is easier to analyze, and it will be shown in Proposition \ref{prop:coupling_error} that the two may be coupled with asymptotically negligible effect on the particles' positions, thus reducing the analysis of the sampling algorithm in Proposition \ref{prop:sampling_alg} to something much simpler.

\begin{defi}\label{def:v_process}
Define the \emph{non-interacting insertion map} $\eta: \Z_{\geq 0}^n \times \Z^n \to \Z^n$ by 
\begin{equation*}
    \eta(a_1,\ldots,a_n; \bv) = (v_1+a_1,\ldots,v_n+a_n),
\end{equation*}
and extend to $\eta: \Omega_{\hx} \times \Z^n \to \Z^n$ as in \eqref{eq:iota_on_generalized_variables}. Define a random sequence $\bv(0),\bv(1),\ldots$ with $\bv(i) \in \Z^n$ on $\Omega$ by setting $\bv(0) = (0[n])$ and
\begin{equation*}
    \bv(k,\bsomega) := \eta(\omega^{(k)},\bv(k-1,\bsomega)).
\end{equation*}
\end{defi}

\begin{rmk}
Neither the input tuple nor the output tuple of $\eta$ must be a signature, and if either one happens to be, it does not imply that the other one is.
\end{rmk}

We now state the result mentioned earlier, that the `interacting' and `non-interacting' dynamics $\l(k)$ and $\bv(k)$ may be coupled together with a negligible difference between them.

\begin{prop}\label{prop:coupling_error}
Let $\hx_1,\hx_2,\ldots$ be a sequence of generalized variables, $\hx_i = (x_i, tx_i,\ldots,t^{m_i-1}x_i)$ (where we allow $m_i=\infty$), such that there exists $\delta>0$ for which $x_i \in (\delta,1-\delta)$.

Then with probability $1$ with respect to the product measure\footnote{Defined on $\Omega_{\hx_1} \times \Omega_{\hx_2} \times \cdots$ via the Kolmogorov extension theorem.} $G_{\hx_1} \times G_{\hx_2} \times \cdots$ on $\Omega$, 
\begin{equation}\label{eq:bounded_error}
    \sup_{k \in \Z_{\geq 0}} |\l_i(k) - v_i(k)|
\end{equation}
is bounded for every $i$.
\end{prop}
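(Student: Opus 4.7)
The plan is to couple $\l(k)$ and $\bv(k)$ on the common probability space $\Omega$ via the shared randomness $\bsomega$, observe that they differ only at ``interaction'' sub-steps, show that there are almost surely only finitely many such sub-steps, and conclude that the discrepancy $D_i(k) := \l_i(k) - v_i(k)$ is bounded. Decompose each insertion associated to $\hx_j$ into its elementary applications (one per variable of the geometric progression) and re-index all sub-steps as $s = 1, 2, \ldots$, so that at each sub-step an input tuple $Y^{(s)} = (Y_1^{(s)}, \ldots, Y_n^{(s)})$ with independent coordinates $Y_i^{(s)} \sim G_{y_s t^{i-1}}$ drives a single application of $\iota$ (resp.\ $\eta$). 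Call sub-step $s$ an \emph{interaction event} if some particle blocks another during $\iota(Y^{(s)}; \l(s-1))$; on non-interaction sub-steps, $\iota$ and $\eta$ produce identical increments and $D_i(s) = D_i(s-1)$, whereas at an interaction event $|D_i(s) - D_i(s-1)| \leq \sum_\ell Y_\ell^{(s)}$, a random variable with all moments bounded uniformly in $s$.

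The main task is to show that almost surely only finitely many interaction events occur. An interaction at step $s$ requires $\sum_{\ell \geq j+1} Y_\ell^{(s)} > G_j(s-1)$ for some $j$, where $G_j(s) := \l_j(s) - \l_{j+1}(s)$; since the tuple $Y^{(s)}$ has geometric-type tails, this probability decays exponentially in $G_j(s-1)$. It therefore suffices to show that each $G_j(s)$ grows at least linearly in $s$ a.s.\ for every $j \in \{1,\ldots,n-1\}$. A direct computation of the drift $\E[Y_j^{(s)} - Y_{j+1}^{(s)}]$ yields a positive quantity bounded below uniformly on the relevant range of $y_s$, so by the strong law of large numbers, $v_j(s) - v_{j+1}(s) \geq c_j s$ almost surely for large $s$. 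To transfer this lower bound to the interacting gap, we use the mass identity $\sum_j \l_j(s) = \sum_j v_j(s)$ together with the partial-sum inequality $\sum_{j \leq i} \l_j(s) \geq \sum_{j \leq i} v_j(s)$, which holds because impulse only cascades from higher-indexed to lower-indexed particles under $\iota$; combining this with an inductive analysis on $i$ that controls the cumulative ``upper-stalling'' of $\l_i$ by the cumulative donations it forwards upward, both of which are $o(s)$ under the inductive hypothesis, yields the desired linear growth of each $G_j(s)$.

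\emph{The main obstacle} is establishing linear growth of $G_j(s)$ in the interacting dynamics: the naive inequality $G_j(s) \geq v_j(s) - v_{j+1}(s)$ fails for $n \geq 3$ because a single blocking-from-above step can shrink the interacting gap below its non-interacting counterpart, so the partial-sum monotonicity and inductive control of the cascades are essential. Once the linear growth is in hand, a Borel--Cantelli argument yields finitely many interaction events almost surely, and combined with the bound $|D_i(s) - D_i(s-1)| \leq \sum_\ell Y_\ell^{(s)}$ this shows that $D_i(s)$ stabilizes almost surely to a finite limit, proving $\sup_s |D_i(s)| < \infty$.
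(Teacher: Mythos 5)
Your proposal captures the right high-level intuition — interactions between particles should eventually stop mattering, so one couples $\l(k)$ to $\bv(k)$ and tries to show only finitely many interaction events occur almost surely. Your auxiliary observations are also correct: the decomposition into elementary sub-steps, the fact that an interaction between particles $j$ and $j+1$ at sub-step $s$ requires $\sum_{\ell\geq j+1}Y_\ell^{(s)}>G_j(s-1)$, the conservation $\sum_j\l_j(s)=\sum_j v_j(s)$, and the partial-sum monotonicity $\sum_{j\leq i}\l_j(s)\geq\sum_{j\leq i}v_j(s)$ (impulse only cascades toward lower indices) all hold.

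However, the central difficulty — which you correctly flag — is not resolved. You need linear growth of the \emph{interacting} gaps $G_j(s)=\l_j(s)-\l_{j+1}(s)$, but the quantity $\l_{j+1}(s)$ is controlled by donations $\l_{j+1}$ receives from $\l_{j+2}$, which in turn depend on the gap $G_{j+1}$, which depends on $\l_{j+2}$, and so on. Your sketch proposes an ``inductive analysis on $i$'' that controls both the impulse $\l_i$ receives from below and the impulse it forwards upward, ``both of which are $o(s)$ under the inductive hypothesis.'' But neither a base case nor an induction direction is specified that avoids circularity: starting from $i=1$, bounding what $\l_1$ receives requires control of $G_1$, hence of $\l_2$; starting from $i=n$, bounding what $\l_n$ gives up requires control of $G_{n-1}$, hence of $\l_{n-1}$. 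As written, the induction does not close. The paper breaks this circularity with a genuinely different device: it introduces a chain of intermediate processes $\l^{(1)}=\bv,\ \l^{(2)},\ \ldots,\ \l^{(n)}=\l$, where in $\l^{(j)}$ only the last $j$ particles interact, and proves one-sided monotonicity $\l^{(j+1)}_{n-j}\geq\l^{(j)}_{n-j}$ and $\l^{(j+1)}_{n-j+i}\leq\l^{(j)}_{n-j+i}$ between consecutive levels. This lets the paper turn on one new interaction at a time: at level $j+1$, the potentially-pushed particle $n-j$ is frozen at the non-interacting trajectory $v_{n-j}$ in the level-$j$ process, while the pushing particle $n-j+1$ is \emph{at most} where it sits at level $j$, which the inductive hypothesis places within a bounded distance of $v_{n-j+1}$. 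Overlaps can then be ruled out (up to finitely many) via the non-interacting overlap lemma (\Cref{lem:finitely_many_overlaps}), for which the SLLN and moment bounds apply cleanly, and a conservation argument spreads the bound to all coordinates. Without this (or a comparable mechanism), the argument you outline does not yield a proof, and the ``Borel--Cantelli on exponentially decaying interaction probabilities'' step remains contingent on the unproved linear-gap claim; note also that since $G_j(s-1)$ is random and history-dependent, the Borel--Cantelli application itself requires care beyond what is sketched.
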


Informally, the particles interact when a particle behind jumps to the position of the particle in front. The following lemma shows that in the non-interacting case, such overlaps occur a negligible amount, which will be used in the proof of Proposition \ref{prop:coupling_error} to show that interactions contribute negligibly overall as well. 

\begin{lemma}\label{lem:finitely_many_overlaps}
With the same hypotheses on the $\hx_i$ as in Proposition \ref{prop:coupling_error}, we have that with probability $1$, the set
\begin{equation*}
    \{k \in \Z_{\geq 0}: v_i(k) \leq v_{i+1}(k+1)+B\}
\end{equation*}
is finite for all $B \in \Z$ and all $i$.
\end{lemma}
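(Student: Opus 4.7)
The plan is to exploit the fact that, in the non-interacting dynamics, the $i$-th particle has a strictly larger expected jump than the $(i+1)$-st particle, so that $v_i(k) - v_{i+1}(k)$ has positive linear drift. Once this is established, an SLLN argument shows $v_i(k) - v_{i+1}(k+1) \to +\infty$ a.s., which immediately gives finiteness of the indicated set for every $B$ and every $i$.

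First I would compute the expected increment. Using the identity
\begin{equation*}
\frac{t^{a}x(1-t)}{(1-t^{a}x)(1-t^{a+1}x)} = \frac{1}{1-t^{a+1}x} - \frac{1}{1-t^{a}x},
\end{equation*}
(wait, sign is $\frac{1}{1-t^ax} - \frac{1}{1-t^{a+1}x}$ is negative; actually $\frac{1}{1-t^{a+1}x}-\frac{1}{1-t^ax} = \frac{t^ax(1-t)}{(1-t^ax)(1-t^{a+1}x)}$ since $t^ax > t^{a+1}x$). Summing over $\ell = 0,\ldots,m_k-1$ telescopes the mean increment of $v_i$ at step $k$ to
\begin{equation*}
\mu_i(k) := \frac{1}{1-t^{i-1}x_k} - \frac{1}{1-t^{m_k+i-1}x_k},
\end{equation*}
with the second term interpreted as $0$ when $m_k = \infty$. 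The drift difference is $\mu_i(k) - \mu_{i+1}(k)$, which after rearranging equals a second difference of the convex function $j \mapsto 1/(1-t^{j}x_k)$. Since $x_k \in (\delta,1-\delta)$ and $t \in (0,1)$, convexity together with the uniform bound on $x_k$ yields $\mu_i(k) - \mu_{i+1}(k) \geq c_i > 0$ for a constant depending only on $\delta,t,i$ (in the $m_k = \infty$ case one just uses that $\mu_i(k) - \mu_{i+1}(k) \geq \frac{1}{1-t^{i-1}x_k} - \frac{1}{1-t^{i}x_k}$ is bounded below). Hence $\E[v_i(k) - v_{i+1}(k)] \geq c_i k$.

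Next I would control fluctuations via Kolmogorov's SLLN for independent, non-identically distributed summands. The increment $v_i(k) - v_i(k-1)$ is a sum (possibly infinite) of independent $G_{t^{\ell+i-1}x_k}$ random variables; a direct calculation of $\Var(G_y) = \frac{y(1+y)(1-t)}{(1-y)^2(1-ty)} - \E[G_y]^2$ plus the geometric decay in $\ell$ gives a uniform bound $\Var(v_i(k)-v_i(k-1)) \leq C$ independent of $k$ and of whether $m_k = \infty$. Then $\sum_{j\geq 1} \Var(Y_j)/j^2 < \infty$ for $Y_j := (v_i(j) - v_i(j-1)) - (v_{i+1}(j) - v_{i+1}(j-1))$, so Kolmogorov's SLLN yields $\frac{1}{k}(v_i(k) - v_{i+1}(k)) \to \lim \frac{1}{k}\E[v_i(k) - v_{i+1}(k)] \geq c_i$ a.s.

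Finally I would absorb the one-step shift. Writing $v_i(k) - v_{i+1}(k+1) = (v_i(k) - v_{i+1}(k)) - (v_{i+1}(k+1) - v_{i+1}(k))$, the second term has uniformly bounded mean and variance, so by Chebyshev and Borel-Cantelli it is $o(k)$ a.s. Combining with $v_i(k) - v_{i+1}(k) \geq (c_i/2) k$ eventually a.s., we get $v_i(k) - v_{i+1}(k+1) \to +\infty$ almost surely, so the set $\{k : v_i(k) \leq v_{i+1}(k+1) + B\}$ is finite a.s.\ for every fixed $B$ and $i$; taking a countable intersection over $B \in \Z$ and $i \in \{1,\ldots,n-1\}$ gives the simultaneous statement. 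The main obstacle I anticipate is the bookkeeping for the drift lower bound uniformly in $k$ when $m_k$ is allowed to be infinite and $x_k$ only lies in $(\delta,1-\delta)$; the telescoping identity above makes this manageable but requires care in the edge case.
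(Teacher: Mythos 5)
Your proof is correct, and it takes a genuinely different (and in one respect lighter) route than the paper. The paper fixes $B$, estimates $\Pr\bigl(v_i(k) \leq v_{i+1}(k+1)+B\bigr)$ directly via Markov's inequality applied to the fourth moment of the centered sum, obtains a summable $O(1/k^2)$ bound, and applies Borel--Cantelli; this is why \Cref{lem:facts_about_differences} records a fourth-moment bound. You instead invoke Kolmogorov's SLLN for independent non-identically distributed increments (via the $\sum_j \Var(Y_j)/j^2 < \infty$ criterion), which needs only second moments, and conclude almost-sure divergence of $\bigl(v_i(k)-v_{i+1}(k+1)\bigr)/k$ to a positive limit, which trivially implies the finiteness. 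Both proofs hinge on a uniform-in-$k$ positive lower bound on the drift gap $\E[Y_i(k)]-\E[Y_{i+1}(k)]$; the paper gets this via the algebraic inequality $\E[Y_{i+1}(k)] \leq t\,\E[Y_i(k)]$ (which gives a gap $\geq (1-t)b_i$), whereas you use convexity of $j \mapsto 1/(1-t^j x_k)$ to bound the second difference, which works but needs one more line to pin down a uniform lower bound (the worst case is $m_k = 1$; the second difference there is explicitly $\gtrsim \delta(1-t)^2 t^{i-1}$). Your reduction to fixed $(B,i)$ by a countable intersection at the end is the same step the paper takes at the outset. Two small slips to fix: (i) when $m_k = \infty$, the tail term $1/(1-t^{m_k+i-1}x_k)$ should be interpreted as $1$, not $0$ (the formula $\mu_i(k) = \frac{1}{1-t^{i-1}x_k} - \frac{1}{1-t^{m_k+i-1}x_k}$ tends to $\frac{t^{i-1}x_k}{1-t^{i-1}x_k}$, not to $\frac{1}{1-t^{i-1}x_k}$); this does not affect positivity of the drift gap, but the stated formula is wrong as written. (ii) The conclusion of Kolmogorov's SLLN is $\frac{1}{k}\sum_j (Y_j - \E Y_j) \to 0$; since $\frac{1}{k}\E[v_i(k)-v_{i+1}(k)]$ need not converge, you should state the conclusion as $\liminf_k \frac{1}{k}(v_i(k)-v_{i+1}(k)) \geq c_i$ rather than convergence to a limit.
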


The proof of \Cref{lem:finitely_many_overlaps} will be deferred to \Cref{subsec:non-interacting_analysis}.

\begin{proof}[Proof of Prop. \ref{prop:coupling_error}]
We construct a sequence $\l^{(1)}(k) = \bv(k), \l^{(2)}(k),\ldots,\l^{(n)}(k) = \l(k)$ of discrete-time stochastic processes on the space of particle configurations, all defined on $\Omega_{\hx_1} \times \Omega_{\hx_2} \times \cdots$. Informally, $\l^{(j)}$ is the process in which the last $j$ particles $\l^{(j)}_n, \ldots, \l^{(j)}_{n-j}$ interact as in Definition \ref{def:lambda_process}, but particles $\l^{(j)}_{n-j+1},\ldots,\l^{(j)}_1$ do not interact with any other particles, as in \Cref{def:v_process}. We will then prove by induction on $j$ that with probability $1$,
\begin{equation}\label{eq:inductive_bounded_error}
    \sup_{k \in \Z_{\geq 0}} |\l_i^{(j)}(k) - v_i(k)|
\end{equation}
is finite for all $i$. When $j=n$, this will prove \eqref{eq:bounded_error}. 

Now let us be more formal. Let $\Sig_n^{(j)} = \{(v_1,\ldots,v_n) \in \Z^n: v_n \leq \ldots \leq v_{n-j+1}\}$. Following the indexing theme above, we see that $\Sig_n^{(1)} = \Z^n$ and $\Sig_n^{(n)} = \Sig_n$. Once we have defined $\l^{(i)}$ it will be true that $\l^{(i)}$ takes values in $\Sig_n^{(i)}$.

Now, define $\eta^{(j)}: \Z_{\geq 0}^n \times \Sig_n^{(j)} \to \Sig_n^{(j)}$ by
\begin{equation}\label{eq:hybrid_insertion}
    \eta^{(j)}(a_1,\ldots,a_n; v) = (\eta(a_1,\ldots,a_{n-j} ; v_1, \ldots, v_{n-j}),\iota(a_{n-j+1},\ldots,a_n;v_{n-j+1},\ldots,v_n)).
\end{equation}
In other words, particles $v_{n-j+1},\ldots,v_n$ try to jump by $a_{n-j+1},\ldots,a_n$ units respectively, but may donate some of their jumps to the next particle as in the definition of $\iota$, while particles $v_1,\ldots,v_{n-j}$ each jump by $a_1,\ldots,a_{n-j}$ units respectively, independent of the positions of all other particles. It is clear from this description that the image of $\eta^{(j)}$ is indeed $\Sig_n^{(j)}$. It is also clear that $\eta^{(n)} = \iota$, and that
\begin{equation*}
    \l^{(j)}_i(k) = v_i(k) \text{   for $i=1,\ldots,n-j$.}
\end{equation*}
When $j=1$ this means that the first $n-1$ particles do not interact and hence the $n\tth$ particle has no one to interact with, therefore $\eta^{(1)} = \eta$. Just as in \eqref{eq:iota_on_generalized_variables}, we extend $\eta^{(j)}$ to a map $\Omega_{\hx} \times \Sig_n^{(j)} \to \Sig_n^{(j)}$ for any generalized variable $\hx$.

Finally, given generalized variables $\hx_1,\hx_2,\ldots$, we define the discrete-time stochastic processes $\l^{(j)}(k)$ on the probability space $\Omega := \Omega_{\hx_1} \times \Omega_{\hx_2} \times \cdots$ by setting $\l^{(j)}(0) = (0[n])$ and
\begin{equation}\label{eq:def_intermediate_arrays}
    \l^{(j)}(k,\bsomega) = \eta^{(j)}(\omega^{(k)},\l^{(j)}(k-1,\bsomega))
\end{equation}
where $\bsomega = (\omega^{(1)},\ldots) \in \Omega$ as in Definition \ref{def:gen_var_stuff2}. We will usually write the random variable $\l^{(j)}(k)$ without the dependence on $\bsomega$.

We claim that the inequalities
\begin{equation}\label{eq:ind_lower_bound}
    \l^{(j+1)}_{n-j}(k) \geq \l^{(j)}_{n-j}(k)
\end{equation}
and 
\begin{equation}\label{eq:ind_upper_bound}
    \l^{(j+1)}_{n-j+i}(k) \leq \l^{(j)}_{n-j+i}(k) \text{   for }i=1,\ldots,j
\end{equation}
hold for all $k$. We prove this by induction on $k$, the base case $k=0$ following since $\l^{(\ell)}(0) = (0[n])$. Suppose that \eqref{eq:ind_lower_bound} and \eqref{eq:ind_upper_bound} hold for some $k$. Since \eqref{eq:def_intermediate_arrays} defines $\l^{(j+1)}(k+1)$ and $\l^{(j)}(k+1)$ by inserting $\omega^{(k)} \in \Omega_{\hx_k}$, which is a sequence of elements of $\Z_{\geq 0}^n$, it suffices to show that the inequalities \eqref{eq:ind_lower_bound} and \eqref{eq:ind_upper_bound} remain true after inserting a single element of $\Z_{\geq 0}^n$. To be precise, it suffices to show that for any $\nu \in \Sig_n^{(j+1)}, \mu \in \Sig_n^{(j)}$ such that\footnote{Note that \eqref{eq:nu_ineq1}, \eqref{eq:nu_ineq2} are the same as \eqref{eq:ind_lower_bound} and \eqref{eq:ind_upper_bound}.}
\begin{align}
    \nu_{n-j} &\geq \mu_{n-j} \label{eq:nu_ineq1}\\
    \nu_{n-j+i} & \leq \mu_{n-j+i} \text{ for $i=1,\ldots,j$} \label{eq:nu_ineq2}
\end{align}
and $a \in \Z_{\geq 0}^n$, one has
\begin{align}
    \eta^{(j+1)}(a;\nu)_{n-j} &\geq \eta^{(j)}(a;\mu)_{n-j}\label{eq:eta_geq_to_show} \\
    \eta^{(j+1)}(a;\nu)_{n-j+i} &\leq \eta^{(j)}(a;\mu)_{n-j+i} \text{ for $i=1,\ldots,j$}. \label{eq:etq_leq_to_show}
\end{align}
\eqref{eq:eta_geq_to_show} is clear because $\eta^{(j)}(a;\mu)_{n-j} = \mu_{n-j}+a_{n-j}$ while $\eta^{(j+1)}(a;\nu)_{n-j} \geq \nu_{n-j}+a_{n-j}$ (where the possible $>$ comes from the fact that $\nu_{n-j}$ may get pushed by the preceding particle). We now turn to \eqref{eq:etq_leq_to_show}

Applying \eqref{eq:iota_uniform_def} to the $\iota$ in \eqref{eq:hybrid_insertion}, we have
\begin{multline}\label{eq:short-term1}
    \eta^{(j+1)}(a;\nu)_{n-j+i}\\ = \min(\nu_{n-j+i-1}, \max(\nu_{n-j+i}+a_{n-j+i},\nu_{n-j+i+1}+a_{n-j+i}+a_{n-j+i+1},\ldots,\nu_n+a_{n-j+i}+\ldots+a_n))
\end{multline}
for $i=1,\ldots,j$.
Similarly, \eqref{eq:iota_uniform_def} implies
\begin{multline}\label{eq:short-term2}
    \eta^{(j)}(a;\mu)_{n-j+i} =  \begin{cases}
    \min(\mu_{n-j+i-1},\max(\mu_{n-j+i}+a_{n-j+i},\ldots,\mu_n+a_{n-j+i}+\ldots+a_n)) & i \geq 2 \\
    \max(\mu_{n-j+i}+a_{n-j+i},\ldots,\mu_n+a_{n-j+i}+\ldots+a_n) & i=1
    \end{cases}
\end{multline}
Because $\nu_{n-j+i} \leq \mu_{n-j+i},\ldots,\nu_n \leq \mu_n$, we have
\begin{multline}\label{eq:mu_nu_ineq}
    \max(\nu_{n-j+i}+a_{n-j+i},\nu_{n-j+i+1}+a_{n-j+i}+a_{n-j+i+1},\ldots,\nu_n+a_{n-j+i}+\ldots+a_n)) \\
    \leq \max(\mu_{n-j+i}+a_{n-j+i},\mu_{n-j+i+1}+a_{n-j+i}+a_{n-j+i+1},\ldots,\mu_n+a_{n-j+i}+\ldots+a_n)).
\end{multline}
and 
\begin{equation}\label{eq:mu_nu_ineq2}
    \nu_{n-j+i-1} \leq \mu_{n-j+i-1}\text{ when $i \geq 2$}.
\end{equation}
Combining the definitions \eqref{eq:short-term1} and \eqref{eq:short-term2} with the inequalities \eqref{eq:mu_nu_ineq} and \eqref{eq:mu_nu_ineq2} yields the desired
\begin{equation*}
    \eta^{(j+1)}(a;\nu)_{n-j+i} \leq \eta^{(j)}(a;\mu)_{n-j+i} \text{    for $i=1,\ldots,j$.}
\end{equation*}

Thus we have proven \eqref{eq:ind_lower_bound} and \eqref{eq:ind_upper_bound}.

We finally turn to the proof of \eqref{eq:inductive_bounded_error}, by induction on $j$. The base case $j=1$ follows because $\l^{(1)}(k) = \bv(k)$ for all $k$ as noted earlier. Thus we will suppose that \eqref{eq:inductive_bounded_error} holds for some $j \geq 1$ and verify that it holds for $j+1$.

We will first show 
\begin{equation}\label{eq:inductive_bounded_error_special_case}
    \sup_{k \in \Z_{\geq 0}} |\l_{n-j}^{(j+1)}(k) - v_{n-j}(k)| < \infty
\end{equation}
almost surely. First note that for $k$ such that $\l_{n-j+1}^{(j+1)}(k+1) \leq \l^{(j+1)}_{n-j}(k)$,
\begin{equation*}
    \l_{n-j}^{(j+1)}(k+1)-\l_{n-j}^{(j+1)}(k) = v_{n-j}(k+1)-v_{n-j}(k)
\end{equation*}
because no pushing occurs. For $k$ such that 
\begin{equation}\label{eq:bad_overlap}
\l_{n-j+1}^{(j+1)}(k+1) > \l^{(j+1)}_{n-j}(k)    
\end{equation}
$\l^{(j+1)}_{n-j}$ may receive some push from $\l_{n-j+1}^{(j+1)}$, causing it to move further than $v_{n-j}$ does during that round. Hence to show \eqref{eq:inductive_bounded_error_special_case}, it suffices to show that the number of $k$ for which \eqref{eq:bad_overlap} holds is almost surely finite, as then the error $\sup_{k \in \Z_{\geq 0}} |\l_{n-j}^{(j+1)}(k) - v_{n-j}(k)|$ is a sum of a finite number of almost surely finite random variables (each one representing the amount by which the $(n-j)\tth$ particle gets pushed).

By \eqref{eq:ind_lower_bound}, $\l^{(j+1)}_{n-j}(k) \geq \l^{(j)}_{n-j}(k)$, and by \eqref{eq:ind_upper_bound} $\l_{n-j+1}^{(j+1)}(k+1) \leq \l^{(j)}_{n-j+1}(k+1)$. Hence for $k$ such that \eqref{eq:bad_overlap} holds,
\begin{equation*}
\l_{n-j+1}^{(j)}(k+1) > \l^{(j)}_{n-j}(k)  
\end{equation*}
also holds, and since $ \l^{(j)}_{n-j}(k)  = v_{n-j}(k) $, we have that
\begin{equation}\label{eq:bad_overlap2}
v_{n-j+1}(k+1) + (\l_{n-j+1}^{(j)}(k+1)-v_{n-j+1}(k+1)) > v_{n-j}(k)  
\end{equation}
holds as well, so it suffices to show that
\begin{equation}\label{eq:final_finiteness}
    |\{k: v_{n-j+1}(k+1) + (\l_{n-j+1}^{(j)}(k+1)-v_{n-j+1}(k+1)) > v_{n-j}(k) \}| < \infty \text{    a.s.}
\end{equation}
By the inductive hypothesis that \eqref{eq:inductive_bounded_error} holds for $j$, we have that 
\begin{equation}\label{eq:another_special_case_of_bound_error}
    \sup_{k \in \Z_{\geq 0}} |\l_{n-j+1}^{(j)}(k) - v_{n-j+1}(k)| < \infty
\end{equation}
almost surely. Since by Lemma \ref{lem:finitely_many_overlaps},
\begin{equation*}
    \{k: v_{n-j+1}(k+1) + B > v_{n-j}(k)\}
\end{equation*}
is almost surely finite for all $B$, it is in particular almost surely finite for the \emph{random} 
\begin{equation*}
B = \sup_{k \in \Z_{\geq 0}} |\l_{n-j+1}^{(j)}(k) - v_{n-j+1}(k)|    
\end{equation*}
(the order of quantifiers in \Cref{lem:finitely_many_overlaps} is important for this conclusion). Since $\l_{n-j}^{(j)}(k+1)-v_{n-j}(k+1)$ is almost surely bounded, \eqref{eq:final_finiteness} follows. This completes the proof of \eqref{eq:inductive_bounded_error_special_case}.

Now, since 
\begin{align}
    \l^{(j+1)}_{n-j}(k) &\geq \l^{(j)}_{n-j}(k) \\
    \l^{(j+1)}_{n-j+i}(k) &\leq \l^{(j)}_{n-j+i}(k) \text{     for $i=1,\ldots,j$} \\
    \l^{(j+1)}_{i}(k) &= \l^{(j)}_{i}(k) \text{     for $i=1,\ldots,n-j-1$}\\
    \sum_{i=1}^n \l^{(j+1)}_{i}(k) &= \sum_{i=1}^n  \l^{(j)}_{i}(k),
\end{align}
it follows that 
\begin{equation}\label{eq:back_and_forward_pushing_equality}
    \sum_{i=1}^j (\l^{(j)}_{n-j+i}(k) - \l^{(j+1)}_{n-j+i}(k)) = \l^{(j+1)}_{n-j}(k) - \l^{(j)}_{n-j}(k)
\end{equation}
(this is a kind of conservation of momentum: the amount that the $(n-j)\tth$ particle is pushed forward from collisions equals the amount that the particles behind it are pushed backward). We have
\begin{equation*}
    \sup_k |\l^{(j+1)}_{n-j}(k) - \l^{(j)}_{n-j}(k)| \leq \sup_k |\l^{(j+1)}_{n-j}(k)-v_{n-j}(k)| + \sup_k|\l^{(j)}_{n-j}(k)-v_{n-j}(k)| < \infty \text{    a.s.}
\end{equation*}
by applying \eqref{eq:inductive_bounded_error_special_case} to the first term and the inductive hypothesis to the second.
Because the summands $\l^{(j)}_{n-j+i}(k) - \l^{(j+1)}_{n-j+i}(k)$ on the LHS of \eqref{eq:back_and_forward_pushing_equality} are nonnegative, it follows that 
\begin{equation}\label{eq:lambda_diff_bound_inductive}
    \sup_k |\l^{(j)}_{n-j+i}(k) - \l^{(j+1)}_{n-j+i}(k)| < \infty \text{    a.s.}
\end{equation}
for $i=1,\ldots,j$. We thus have
\begin{equation*}
    \sup_k |\l^{(j+1)}_{n-j+i}(k) - v_{n-j+i}(k)| \leq \sup_k |\l^{(j)}_{n-j+i}(k) - \l^{(j+1)}_{n-j+i}(k)| + \sup_k |\l^{(j)}_{n-j+i}(k) -v_{n-j+i}| < \infty \text{    a.s.}
\end{equation*}
by applying \eqref{eq:lambda_diff_bound_inductive} to the first summand and the inductive hypothesis to the second. This establishes \eqref{eq:inductive_bounded_error} for $j+1$, for $i=n-j,n-j+1,\ldots,n$, and the equation is trivial (the supremum is just $0$) when $i=1,\ldots,n-j-1$. This completes the induction on $j$, showing that \eqref{eq:inductive_bounded_error} holds for all $i$ and $j$. In particular it holds for $j=n$, which proves Proposition \ref{prop:coupling_error}.

\end{proof}

\subsection{Analysis of non-interacting particle dynamics $\bv(k)$ and proof of {\Cref{thm:general_hl_lln_clt}}.} \label{subsec:non-interacting_analysis}

In the previous subsection, we phrased the relevant Hall-Littlewood process in terms of a particle system $\l(k)$ in which particles interact, then coupled it to a system $\bv(k)$ where they do not interact. In this subsection we analyze $\bv(k)$ to prove our results. We first record facts about the means, variances and fourth moments of jumps of $\bv(k)$ in Lemma \ref{lem:facts_about_differences}, some of which are used to give an overdue proof of Lemma \ref{lem:finitely_many_overlaps}, used in the previous subsection. We then apply them and Donsker's theorem to prove an analogue of Theorem \ref{thm:general_hl_lln_clt} for $\bv(k)$, and conclude the desired result for $\l(k)$ by our coupling and Proposition \ref{prop:coupling_error}

\begin{lemma}\label{lem:facts_about_differences}
Let $\delta > 0$ and let $\hx_1,\hx_2,\ldots$ be generalized variables such that $\delta < x_i < 1-\delta$ for all $i$ as in Proposition \ref{prop:coupling_error}. Let $\bv(k)$ be as in the previous subsection, $Y_i(k) := v_i(k)-v_i(k-1)$ for $k \geq 0$, and $\bY_i(k) = Y_i(k) - \E Y_i(k)$. Then
\begin{enumerate}[label={\arabic*.}]
    \item If $\hx_k = (x_k, \ldots, t^{m-1}x_k)$, then 
    \begin{equation}\label{eq:Yik_mean}
        \E Y_i(k) = \sum_{j=0}^{m-1}\frac{x_kt^{j+i-1}(1-t)}{(1-t^{j+i}x_k)(1-t^{j+i-1}x_k)}
    \end{equation}
    where we allow $m=\infty$. Consequently, there exist constants $b_i, B_i > 0$ such that $b_i < \E Y_i(k) < B_i$ for all $k$.
    \item For $\hx_k$ as above, we have
    \begin{equation}\label{eq:Yik_variance}
        \E[\bY_i(k)^2] = \sum_{j=0}^{m-1} \frac{t^{j+i-1}x_k(1 - t)  (1 - t^{2j+2i-1} x_k^2)}{(1 - t^{j+i-1}x_k)^2 (1 - t^{j+i} x_k)^2}.
    \end{equation}
    Consequently, there exist constants $c_i,C_i > 0$ such that $c_i < \E[\bY_i(k)^2] < C_i$ for all $k$.
    \item There exist constants $D_i > 0$ such that $\E[\bY_i(k)^4] < D_i$ for all $k$.
\end{enumerate}
\end{lemma}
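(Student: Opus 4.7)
The plan is to reduce the entire lemma to an elementary computation of moments of the basic random variable $G_y$, after first decomposing $Y_i(k)$ into a sum of independent $G$-distributed random variables. From \Cref{def:v_process} and \Cref{def:generalized_variable_stuff}, when $\hx_k = (x_k, tx_k, \ldots, t^{m-1}x_k)$ the total non-interacting jump $\bv(k) - \bv(k-1)$ is the componentwise sum of $m$ independent random vectors in $\Z_{\geq 0}^n$; by \Cref{prop:sampling_alg} the $j$-th of these ($j = 1,\ldots,m$) has $i$-th coordinate distributed as $G_{t^{i+j-2}x_k}$. After reindexing I obtain
\[
Y_i(k) = \sum_{j=0}^{m-1} Z_{i,j,k}, \qquad Z_{i,j,k} \sim G_{t^{i+j-1}x_k},
\]
with all the $Z_{i,j,k}$ mutually independent. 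The case $m=\infty$ is handled because the sums below will converge absolutely.

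Next I would carry out the elementary computation, starting from \eqref{eq:trunc_geom_formula} (or more cleanly from the p.g.f.\ \eqref{eq:trunc_geom_pgf}):
\[
\E[G_y] = \frac{y(1-t)}{(1-y)(1-ty)}, \qquad \var(G_y) = \frac{y(1-t)(1-ty^2)}{(1-y)^2(1-ty)^2}.
\]
Substituting $y = t^{i+j-1}x_k$ and summing over $j$ (using independence for the variance) yields formulas \eqref{eq:Yik_mean} and \eqref{eq:Yik_variance} immediately. The hypothesis $x_k \in (\delta, 1-\delta)$ then gives the uniform bounds: every factor $1 - t^{i+j-1}x_k$ appearing in a denominator lies in $[\delta, 1]$, so each summand is sandwiched between constant multiples of $t^{i+j-1}$; summing the convergent geometric series $\sum_{j \geq 0} t^j$ produces upper bounds $B_i, C_i$ independent of $k$ and of $m$. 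For the strictly positive lower bounds $b_i, c_i$ I would retain only the $j = 0$ term, which is bounded below because $x_k > \delta$.

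For Part 3, I would expand $\bY_i(k)^4 = \bigl(\sum_j \bar Z_{i,j,k}\bigr)^4$ with $\bar Z_{i,j,k} := Z_{i,j,k} - \E Z_{i,j,k}$. By mean-zeroness and mutual independence across $j$, every cross term carrying an odd power of some $\bar Z_{i,j,k}$ vanishes, so the expansion reduces to
\[
\E[\bY_i(k)^4] = \sum_{j=0}^{m-1} \E[\bar Z_{i,j,k}^4] + 3 \sum_{\substack{0 \leq j, j' \leq m-1 \\ j \neq j'}} \E[\bar Z_{i,j,k}^2]\, \E[\bar Z_{i,j',k}^2].
\]
A direct computation analogous to the variance calculation shows that the fourth central moment of $G_y$ is a rational function in $y$ whose denominator is again built from $1 - t^k y$ factors, and which scales like $y$ as $y \to 0$; consequently in our setting it is bounded by a constant times $t^{i+j-1}$. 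The same geometric-series bookkeeping then controls both sums uniformly in $k$ and $m$, producing the constant $D_i$.

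The only subtlety is uniformity when $m = \infty$, which is immediate from the geometric decay just noted; everything else is direct computation, so no genuine obstacle arises.
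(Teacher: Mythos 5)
Your proof is correct and follows essentially the same route as the paper: decompose $Y_i(k)$ into a sum of independent $G_{t^{i+j-1}x_k}$-distributed random variables (the paper states this decomposition directly from \Cref{def:v_process}; citing \Cref{prop:sampling_alg} as you do is a slightly more explicit justification of the same fact), compute the first, second, and fourth moments of $G_y$ from the probability generating function, and then bound the resulting geometric sums uniformly using $x_k \in (\delta, 1-\delta)$. One small note: your expansion of $\E[\bY_i(k)^4]$ with the coefficient $3$ in front of the mixed sum over ordered pairs $j \neq j'$ is the correct combinatorial count; the paper's displayed version of that identity omits the factor of $3$, though this is harmless since only an upper bound is needed.
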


\begin{defi}\label{def:mu_and_sigma}
We let $\mu(t^{i-1}\hx_k)$ denote the RHS of \eqref{eq:Yik_mean}, and $\sigma^2(t^{i-1}\hx_k)$ denote the RHS of \eqref{eq:Yik_variance}, which by Lemma \ref{lem:facts_about_differences} are the mean and variance of $Y_i(k)$ respectively.
\end{defi}

\begin{proof}
It follows from the definition of $v$ that
\begin{equation}\label{eq:what_v_is_explicitly}
    Y_i(k) = v_i(k) - v_i(k-1)= \sum_{j=0}^{m-1} Z_{t^{j+(i-1)} x_k},
\end{equation}
where $Z_{x} \sim G_x$ are independent. We compute
\begin{align*}
    \E Z_x &= \dderiv{}{y}|_{y=1} \sum_{\ell \geq 0} \Pr(Z_x = \ell)y^\ell \\
    &=  \dderiv{}{y}|_{y=1} \sum_{\ell \geq 0} \frac{1-x}{1-t x}(1-t)^{\bbone(\ell>0)}(xy)^\ell \\
    &= \dderiv{}{y}|_{y=1} \frac{1-x}{1-tx} \frac{1-txy}{1-xy} \\
    &= \left[\frac{1-x}{1-tx} \frac{-tx(1-xy)+x(1-txy)}{(1-xy)^2} \right]_{y=1} \\
    &= \frac{x(1-t)}{(1-tx)(1-x)}.
\end{align*}
Combining with \eqref{eq:what_v_is_explicitly} yields \eqref{eq:Yik_mean}. We have
\begin{equation*}
    \E Y_i(k) \geq \E Z_{t^{i-1}x_k} = \frac{t^{i-1}x_k(1-t)}{(1-t^{i}x_k)(1-t^{i-1}x_k)} > t^{i-1}x_k(1-t) \geq t^{i-1}(1-t)\delta,
\end{equation*}
so setting $b_i = t^{i-1}(1-t)\delta$ we have $b_i < \E Y_i(k)$. For the other bound,
\begin{align*}
    \E Y_i(k) &\leq \sum_{j=0}^\infty \frac{t^{j+i-1}x_k(1-t)}{(1-t^{j+i}x_k)(1-t^{j+i-1}x_k)} \\
    & < \frac{1}{(1-t^{i}(1-\delta))(1-t^{i-1}(1-\delta)))} \sum_{j=0}^\infty (1-t)t^{i-1}x_k \cdot t^j \\
    & < \frac{t^{i-1}\delta}{(1-t^{i}(1-\delta))(1-t^{i-1}(1-\delta)))},
\end{align*}
so we may set $B_i = \frac{t^{i-1}\delta}{(1-t^{i}(1-\delta))(1-t^{i-1}(1-\delta)))}$. This proves Part 1 of the lemma.

For Part 2, we have 
\begin{equation*}
    \bY_i(k) = \sum_{j=0}^{m-1} (Z_{t^{j+(i-1)} x_k} - \E[Z_{t^{j+(i-1)} x_k}]). 
\end{equation*}
Set $\bZ_x = Z_x - \E Z_x$. Since the $\bZ$'s above are independent, the variances add. Hence the lower bound $c_i < \E[\bY_i(k)^2]$ follows because $\E[\bZ_{t^{j+(i-1)} x_k}^2]$ is bounded below for $x_k \in (\delta,1-\delta)$. For the upper bound, we compute
\begin{align*}
    \E Z_x^2 &= \dderiv{}{y}|_{y=1} y\dderiv{}{y} \sum_{\ell \geq 0} \Pr(Z_x = \ell)y^\ell \\
    &= \dderiv{}{y}|_{y=1} y \frac{1-x}{1-tx} \frac{-tx(1-xy)+x(1-txy)}{(1-xy)^2} \\
    &= \frac{x(1 - t) (1 + x)}{(1 - x)^2 (1 - t x)},
\end{align*}
so
\begin{align}
    \E[\bZ_x^2] = \E Z_x^2 - (\E Z_x)^2 = \frac{x(1 - t)  (1 - t x^2)}{(1 - x)^2 (1 - t x)^2},
\end{align}
proving \eqref{eq:Yik_variance}. This is bounded above by $\frac{x}{\delta^2 (1-t(1-\delta))^2}$ for $0<x < 1-\delta$, hence 
\begin{align}
    \E[\bY_i(k)^2] &= \sum_{j=0}^{m-1} \E[\bZ_{t^{j+(i-1)} x_k}^2]\\
    & \leq \frac{1}{\delta^2 (1-t(1-\delta))^2} \frac{t^{i-1}x_k}{1-t} \\
    & < \frac{1}{\delta^2 (1-t(1-\delta))^2} \frac{t^{i-1}(1-\delta)}{1-t}
\end{align}
for all $x_k \in (\delta,1-\delta)$, so we may set $C_i$ to be the final expression. This proves Part 2. 

For the fourth moment,
\begin{align}\label{eq:4th_moment_expansion}
    \E[\bY_i(k)^4] = \sum_{j=0}^{m-1} \E[\bZ_{t^{j+(i-1)} x_k}^4] + \sum_{0 \leq j \neq \ell \leq m-1} \E[\bZ_{t^{j+(i-1)} x_k}^2] \E[\bZ_{t^{\ell+(i-1)} x_k}^2].
\end{align}
We have 
\begin{equation*}
    \E[\bZ_x^4] = \frac{(1-t) x (x^2 + 4 x + 1)}{(1-x)^3 (1-tx)}
\end{equation*}
by a similar generating function computation as before, and bounding the sum of these for $x,tx,t^2x,\ldots$ in terms of geometric series as before yields that
\begin{equation*}
    \sum_{j=0}^\infty \E[\bZ_{t^{j+i-1}x}^4]
\end{equation*}
is bounded uniformly over $x \in (0,1-\delta)$. Likewise, 
\begin{align*}
    &\sum_{0 \leq j \neq \ell \leq m-1} \E[\bZ_{t^{j+(i-1)} x_k}^2] \E[\bZ_{t^{\ell+(i-1)} x_k}^2] \\
    &\leq \left(\sum_{0 \leq j \leq m-1}\E[\bZ_{t^{j+(i-1)} x_k}^2]\right) \\
    &< \left(\frac{1}{\delta^2 (1-t(1-\delta))^2} \frac{t^{i-1}(1-\delta)}{1-t}\right)^2
\end{align*}
by using our previous variance bound at the last step. Hence we have bounded both sums on the RHS of \eqref{eq:4th_moment_expansion} uniformly in $x_k \in (0,1-\delta)$, and Part 3 follows.
\end{proof}

We now prove Lemma \ref{lem:finitely_many_overlaps} as promised.

\begin{proof}[Proof of Lemma \ref{lem:finitely_many_overlaps}]
We first claim that it suffices to show that for any given $B$, 
\begin{equation*}
    |\{k \in \Z_{\geq 0}: v_i(k) \leq v_{i+1}(k+1)+B\}| < \infty \text{    a.s.}
\end{equation*}
(this differs from the statement of Lemma \ref{lem:finitely_many_overlaps} in order of quantifiers). This is immediate because
\begin{multline}
    \{\bsomega \in \Omega: |\{k \in \Z_{\geq 0}: v_i(k,\bsomega) \leq v_{i+1}(k+1,\bsomega)+B'\}| = \infty \text{ for some }B'\} \\
    = \bigcup_{B \in \N} \{\bsomega \in \Omega: |\{k \in \Z_{\geq 0}: v_i(k,\bsomega) \leq v_{i+1}(k+1,\bsomega)+B\}|\}
\end{multline}
so it suffices to show the sets on the RHS have measure $0$. This is what we will now do.

It follows from the formula in Lemma \ref{lem:facts_about_differences} Part 1 that $\E[Y_{i+1}(k)] \leq t \E[Y_i(k)]$, hence
\begin{equation}\label{eq:mean_bound}
    \sum_{j=1}^k \E Y_i(j)-\sum_{j=1}^{k+1} \E Y_{i+1}(j) \geq (1-t)\sum_{j=1}^k \E Y_i(k) - \E Y_{i+1}(k+1) \geq (1-t)b_i \cdot k - B_{i+1}
\end{equation}
where $b_i, B_{i+1}$ are the constants in Lemma \ref{lem:facts_about_differences}. For $k$ such that the RHS of \eqref{eq:mean_bound} is positive,
\begin{align*}
    \Pr(v_i(k) \leq v_{i+1}(k+1)+B) &= \Pr\left( \sum_{j=1}^k \bY_i(j) - \sum_{j=1}^{k+1}\bY_{i+1}(j) \leq B + \sum_{j=1}^{k+1} \E Y_{i+1}(j)-\sum_{j=1}^k \E Y_i(j)\right) \\
    & \leq \Pr\left( \sum_{j=1}^k \bY_i(j) - \sum_{j=1}^{k+1}\bY_{i+1}(j) \leq B + (1-t)b_i \cdot k - B_{i+1}\right) \\
    & \leq \Pr\left(\abs*{\sum_{j=1}^k \bY_i(j) - \sum_{j=1}^{k+1}\bY_{i+1}(j)} \leq B + (1-t)b_i \cdot k - B_{i+1}\right)
\end{align*}
(the last step is the only one using the positivity assumption). By Markov's inequality,
\begin{multline}\label{eq:markov_ineq}
    \Pr\left(\abs*{\sum_{j=1}^k \bY_i(j) - \sum_{j=1}^{k+1}\bY_{i+1}(j)} \leq B + (1-t)b_i \cdot k - B_{i+1}\right) \\
    \leq \frac{\E\left[\left(\sum_{j=1}^k \bY_i(j) - \sum_{j=1}^{k+1}\bY_{i+1}(j)\right)^4\right]}{(B + (1-t)b_i \cdot k - B_{i+1})^4}.
\end{multline}
By Lemma \ref{lem:facts_about_differences},
\begin{align*}
    &\E\left[\left(\sum_{j=1}^k \bY_i(j) - \sum_{j=1}^{k+1}\bY_{i+1}(j)\right)^4\right] \\
    &= \sum_{j=1}^k \E[\bY_i(j)^4] + \sum_{j=1}^{k+1} \E[\bY_{i+1}(j)^4] + \sum_{j=1}^k \sum_{\ell=1}^{k+1} \E[\bY_i(j)^2]\E[\bY_{i+1}(j)^2] \\
    & < k D_i + (k+1) D_{i+1} + k(k+1) C_i C_{i+1} \\
    & = O(k^2).
\end{align*}
Hence the RHS of \eqref{eq:markov_ineq} is $O(1/k^2)$. Thus 
\begin{equation*}
    \sum_k \Pr(v_i(k) \leq v_{i+1}(k+1)+B) < \infty
\end{equation*}
and so by Borel-Cantelli, $\{k \in \Z_{\geq 0}: v_i(k) \leq v_{i+1}(k+1)+B\}$ is almost-surely finite, completing the proof.
\end{proof}

\begin{proof}[Proof of Theorem \ref{thm:general_hl_lln_clt}]
We begin with the first claim \eqref{eq:lln}, the law of large numbers. By Lemma \ref{lem:facts_about_differences}, $\mu(t^{i-1}\hx_k)$ and $\sigma^2(t^{i-1}\hx_k)$ are the mean and variance, respectively, of $Y_i(k)$.
Since $\sum_{j=1}^k Y_i(j) = v_i(k)$, it suffices to show
\begin{equation}\label{eq:simpler_lln}
    \frac{\l_i(k) - \E v_i(k)}{k} \to 1 \text{       a.s.   as $k \to \infty$.}
\end{equation}
By Proposition \ref{prop:coupling_error}, $|\l_i(k) - v_i(k)|$ is almost-surely bounded as $k \to \infty$.
It follows that 
\begin{equation}\label{eq:lambda_v_diff_doesnt_affect_lln}
    \frac{v_i(k) - \l_i(k)}{k} \to 0 \text{    a.s.   as $k \to \infty$.}
\end{equation}
The uniform variance bound in Lemma \ref{lem:basic_weak_convergence_fact} Part 2 ensures that the sequence of random variables $Y_i(1),Y_i(2),\ldots$ satisfies the hypothesis of Kolmogorov's strong law of large numbers \cite[Ch. IV.\textsection 3, Thm. 2]{shiryaev}, hence
\begin{equation}\label{eq:kolmogorov_lln_for_v}
    \frac{\sum_{j=1}^k Y_i(j) - \sum_{j=1}^k \E Y_i(j)}{k} \to 0 \text{      a.s.    as $k \to \infty$.}
\end{equation}
We have
\begin{equation}\label{eq:split_for_lln}
    \frac{\l_i(k)}{\sum_{j=1}^k \mu(t^{i-1}\hx_j)}  = 1+\frac{k}{\sum_{j=1}^k \mu(t^{i-1}\hx_j)}\left(\frac{v_i(k)-\E[v_i(k)]}{k} + \frac{\l_i(k)-v_i(k)}{k}\right).
\end{equation}
By Lemma \ref{lem:facts_about_differences} Part 1, 
\begin{equation*}
    \abs*{\frac{k}{\sum_{j=1}^k \mu(t^{i-1}\hx_j)}} \leq \frac{1}{b_i},
\end{equation*}
so
\begin{equation*}
    \abs*{\frac{\l_i(k)}{\sum_{j=1}^k \mu(t^{i-1}\hx_j)}-1} \leq \frac{1}{b_i}\left(\frac{v_i(k)-\E[v_i(k)]}{k} + \frac{\l_i(k)-v_i(k)}{k}\right).
\end{equation*}
By \eqref{eq:kolmogorov_lln_for_v} and \eqref{eq:lambda_v_diff_doesnt_affect_lln} respectively, the two terms inside the parentheses on the RHS go to $0$ almost surely as $k \to \infty$. This proves \eqref{eq:lln}, the law of large numbers.

To show the second claim of Theorem \ref{thm:general_hl_lln_clt}, namely the convergence of the rescaled $\l_i$ to Brownian motions, we will use the same strategy of first showing convergence for the $v_i$ and then utilizing the coupling. Let
\begin{equation*}
    \barv_i(k) = v_i(k) - \sum_{j=1}^k \mu(t^{i-1}\hx_j) = \sum_{j=1}^k \bY_i(j).
\end{equation*}
Define $f_{\barv_i,k}$, a $C[0,1]$-valued random variable on the probability space $\Omega$ of \Cref{def:gen_var_stuff2} by setting $f_{\tv_i,k}(0)=0$,
\begin{equation*}
    (f_{\barv_i,k}(1/k),f_{\barv_i,k}(2/k),\ldots,f_{\barv_i,k}(1)) = \frac{1}{\sqrt{\sum_{j=1}^k \sigma^2(t^{i-1}\hx_j)}}(\barv_i(1),\ldots,\barv_i(k))
\end{equation*}
and linearly interpolating $f_{\barv_i,k}$ at other values in $[0,1]$. Let the measure $M_{\barv_i,k}$ on $C[0,1]$ be the distribution of $f_{\barv_i,k}$. We claim that as $k \to \infty$, $M_{\barv_i,k}$ converges weakly to the Wiener measure $P_W$ on $C[0,1]$. By Donsker's theorem\footnote{Many versions in print require that the increments $\bY_i(j)$ be identically distributed as well as independent, but a version for random walks with distinct independent increments may be obtained by specializing Donsker's theorem for martingales \cite[Thm. 3]{brown1971martingale} to this case.}, this convergence holds if the Lindeberg condition is satisfied, and it is well-known (see e.g. \cite[Ch. III, \textsection 4.I.2]{shiryaev}) that the Lindeberg condition is implied by the Lyapunov condition. The latter, in our case, reads that for some $\delta > 0$, 
\begin{equation}\label{eq:lyapunov}
    \frac{1}{(\sum_{j=1}^k \sigma^2(t^{i-1}\hx_j))^{1+\delta/2}} \sum_{j=1}^k \E[\bY_i(j)^{2+\delta}] \to 0 \text{        as }k \to \infty.
\end{equation}
We will prove \eqref{eq:lyapunov} when $\delta = 2$. Letting $c_i,D_i$ be as in Lemma \ref{lem:facts_about_differences}, we have
\begin{equation*}
    \left(\sum_{j=1}^k \sigma^2(t^{i-1}\hx_j)\right)^2 > k^2c_i^2
\end{equation*}
and 
\begin{equation*}
     \sum_{j=1}^k \E[\bY_i(j)^{4}] < k D_i.
\end{equation*}
Hence the expression in \eqref{eq:lyapunov} is bounded above by $\frac{D_i}{c_i^2} \frac{1}{k}$, and \eqref{eq:lyapunov} follows immediately. This verifies that $M_{\barv_i,k}$ converges weakly to $P_W$ as $k \to \infty$. Because $v_1,\ldots,v_n$ are independent, we also have that the product measure $M_{\barv_i,k} \times \cdots \times M_{\barv_i,k}$ on $(C[0,1])^n$ converges weakly to $P_W^n$, i.e. $(f_{\barv_1,k},\ldots,f_{\barv_n,k})$ converges in distribution to $n$ independent Brownian motions.

We wish to show via our coupling that that $(f_{\bl_1,k},\ldots,f_{\bl_n,k})$ converges in distribution to $P_W^n$ as well. We will use the following basic lemma.

\begin{lemma}\label{lem:basic_weak_convergence_fact}
Let $S$ be a metric space with Borel $\sigma$-algebra $\Sigma$, and $P$ a probability measure on $(S,\Sigma)$. Let $X_n,Y_n$ be random variables defined on the same probability space and taking values in $S$, such that $|X_n - Y_n| \to 0$ in probability (where $| \cdot |$ denotes the norm induced by the metric on $S$) and such that the distribution $\mu_{Y_n}$ of $Y_n$ converges weakly to $P$. Then $\mu_{X_n}$ converges weakly to $P$ as well.
\end{lemma}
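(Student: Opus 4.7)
The plan is to verify $\mu_{X_n} \Rightarrow P$ via the Portmanteau theorem characterization: it suffices to show $\limsup_{n \to \infty} \mu_{X_n}(F) \leq P(F)$ for every closed set $F \subset S$. Fix such an $F$ and for $\epsilon > 0$ let $F^\epsilon := \{s \in S : d(s,F) \leq \epsilon\}$ denote the closed $\epsilon$-neighborhood. The key observation is the deterministic set inclusion
\begin{equation*}
    \{X_n \in F\} \subset \{Y_n \in F^\epsilon\} \cup \{|X_n - Y_n| > \epsilon\},
\end{equation*}
since if $X_n \in F$ and $|X_n - Y_n| \leq \epsilon$, then by the triangle inequality $d(Y_n,F) \leq |Y_n - X_n| \leq \epsilon$. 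Taking probabilities and then $\limsup_n$ on both sides yields
\begin{equation*}
    \limsup_{n \to \infty} \mu_{X_n}(F) \leq \limsup_{n \to \infty} \mu_{Y_n}(F^\epsilon) + \lim_{n \to \infty} \Pr(|X_n - Y_n| > \epsilon).
\end{equation*}
The second term on the right vanishes by the convergence-in-probability hypothesis, and since $F^\epsilon$ is closed the Portmanteau theorem applied to the weakly convergent sequence $\mu_{Y_n}$ bounds the first term by $P(F^\epsilon)$.

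Finally, letting $\epsilon \downarrow 0$, since $F$ is closed we have $\bigcap_{\epsilon > 0} F^\epsilon = F$, so continuity of the measure $P$ from above gives $P(F^\epsilon) \downarrow P(F)$. Combining, $\limsup_{n \to \infty} \mu_{X_n}(F) \leq P(F)$ for every closed $F$, which by Portmanteau is equivalent to the weak convergence $\mu_{X_n} \Rightarrow P$. There is no real obstacle here—this is a standard Slutsky-type statement and the only thing to watch is the direction of the Portmanteau inequality (which is why we pass through the closed neighborhood $F^\epsilon$ rather than the open one). As a remark, one could alternatively test against bounded Lipschitz functions $f$ directly, bounding $|\E f(X_n) - \E f(Y_n)|$ by splitting the expectation on the events $\{|X_n - Y_n| < \epsilon\}$ and its complement, but the Portmanteau route above is the shortest.
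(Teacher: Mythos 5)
Your proof is correct, and it takes a genuinely different route from the paper's. You go through the Portmanteau characterization, using the inclusion $\{X_n \in F\} \subset \{Y_n \in F^\epsilon\} \cup \{|X_n-Y_n|>\epsilon\}$ for closed $F$, applying Portmanteau to $\mu_{Y_n}$ on the closed thickened set $F^\epsilon$, and then shrinking $\epsilon$. The paper instead tests directly against $f \in \mc{C}_b(S)$: it bounds $\E|f(X_n)-f(Y_n)|$ by splitting on $\{|X_n-Y_n|>\delta\}$, controls the large-deviation part by $\|f\|_\infty \Pr(|X_n-Y_n|>\delta)$, and controls the small-deviation part via the local modulus-of-continuity function $g_\delta(y) := \sup_{x\in\bar B_\delta(y)}|f(x)-f(y)|$, invoking weak convergence of $\mu_{Y_n}$ applied to $g_\delta$ and reverse Fatou as $\delta \to 0$. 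You explicitly flag this alternative as a remark at the end of your write-up, so you are aware of both. Your route has the advantage of avoiding the need to verify that $g_\delta$ is itself a bounded continuous function of $y$ (which the paper asserts and uses to apply weak convergence, a point that requires some care for merely continuous $f$ on a non-locally-compact metric space such as $C[0,1]^n$); the paper's route is more hands-on and closer in spirit to proving Slutsky's theorem from scratch. Either argument is acceptable.
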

\begin{proof}
We must show that for all $f \in \mc C_b(S)$, $\E[f(X_n)] \to \int_S f dP$. By hypothesis, $\E[f(Y_n)] \to \int_S f dP$, so it suffices to show $\E[f(X_n)-f(Y_n)] \to 0$. By the convergence in probability hypothesis, $\Pr(|X_n-Y_n| > \delta) \to 0$ for any $\delta > 0$.

$f$ is bounded, so let $B$ be such that $f \leq B$. We have
\begin{align*}
    \E[|f(X_n)-f(Y_n)|] &= \E[\bbone_{|X_n-Y_n| > \delta}|f(X_n)-f(Y_n)|] + \E[\bbone_{|X_n-Y_n| \leq \delta} |f(X_n)-f(Y_n)|] \\
    & \leq 2B \Pr(|X_n-Y_n|>\delta) + \E\left[\sup_{x : |x-Y_n| \leq \delta} |f(x)-f(Y_n)|\right].
\end{align*}
Since $g_\delta(y) := \sup_{x \in \bar{B}_\delta(y)} |f(x)-f(y)|$ is a continuous, bounded function of $y$, the weak convergence hypothesis yields $\E[g_\delta(Y_n)] \to \int_S g_\delta dP$ as $n \to \infty$. Because $f$ is continuous, $\limsup_{\delta \to 0} g_\delta(y) = 0$ for all $y$, and since $g_\delta$ is uniformly bounded by $2B$ which is integrable on $(S,\Sigma,P)$, we have by reverse Fatou's lemma that
\begin{equation*}
    \limsup_{\delta \to 0} \int_S g_\delta dP \leq \int_S \limsup_{\delta \to 0} g_\delta dP = 0.
\end{equation*}

By the above, for any $\eps > 0$, we may first choose $\delta$ so that $|\int_S g_\delta dP| < \eps/3$, then for all large enough $n$ we have $|\E[g_\delta(Y_n)] - \int_S g_\delta dP < \eps/3$ and $2B \Pr(|X_n-Y_n|>\delta) < \eps/3$, yielding that $\E[|f(X_n)-f(Y_n)|] < \eps$. Hence 
\begin{equation*}
    \E[f(X_n)-f(Y_n)] \to 0
\end{equation*}
as $n \to \infty$, completing the proof.
\end{proof}

When $S = C[0,1]^n$ with metric 
\begin{equation*}
    d((f_1,\ldots,f_n),(g_1,\ldots,g_n)) = \sup_{1 \leq i \leq n} \sup_{x \in [0,1]} |f_i(x)-g_i(x)|,
\end{equation*}
and 
\begin{align}
    X_k &= (f_{\bl_1,k},\ldots,f_{\bl_n,k}) \\
    Y_k &= (f_{\barv_1,k},\ldots,f_{\barv_n,k})
\end{align}
we have from above that $Y_k$ converges in distribution to $P_W^n$. To conclude from Lemma \ref{lem:basic_weak_convergence_fact} that $X_k$ converges in distribution to $P_W^n$, it suffices to show that $d(X_k,Y_k) \to 0$ in probability. So we must show that for any $\delta, \eps > 0$, $\Pr(d(X_k,Y_k) > \delta) < \eps$ for all sufficiently large $k$. Proposition \ref{prop:coupling_error}, together with the fact that $\l_i(k)-\bl_i(k) = \E[v_i(k)] = v_i(k) - \barv_i(k)$, yields that 
\begin{equation*}
    B(\bsomega) := \sup_{1 \leq i \leq n} \sup_k |\bl_i(k,\bsomega)-\barv_i(k, \bsomega)|
\end{equation*}
is an almost-surely finite random variable on $\Omega$. Hence there exists $D$ such that 
\begin{equation}\label{eq:B_omega}
\Pr(B(\bsomega) > D) < \eps.    
\end{equation}
By the lower bound $c_i$ in Lemma \ref{lem:facts_about_differences} Part 2, $\sum_{j=1}^\infty \sigma^2(t^{i-1}\hx_j)$ diverges, hence there exists $K$ such that for all $k>K$,
\begin{equation}\label{eq:K_gives_big_variance}
    \frac{D}{\sum_{j=1}^k \sigma^2(t^{i-1}\hx_j)} < \delta
\end{equation}
for $i=1,\ldots,n$.

We therefore have that for $k>K$ and $\bsomega$ such that $B(\bsomega) \leq D$, 
\begin{equation}\label{eq:lambda_minus_v_bound_corollary}
    \sup_{1 \leq i \leq n} \sup_{0 \leq \ell \leq k} \frac{1}{\sum_{j=1}^k \sigma^2(t^{i-1}\hx_j)}|\bl_i(\ell,\bsomega)-\barv_i(\ell,\bsomega)| < \delta.
\end{equation}
Because $\sup_{x \in [0,1]} |f-g| = \sup_{x=0,1/k,\ldots,1} |f-g|$ if $f,g$ are piecewise linear on each interval $[\frac{\ell}{k},\frac{\ell+1}{k}]$, we have
\begin{equation*}
    \sup_{0 \leq \ell \leq k} \frac{1}{\sum_{j=1}^k \sigma^2(t^{i-1}\hx_j)}|\bl_i(\ell,\bsomega)-\barv_i(\ell,\bsomega)| = \sup_{x \in [0,1]} |f_{\barv_i,k}(x)-f_{\bl_i,k}(x)|.
\end{equation*}
Thus \eqref{eq:lambda_minus_v_bound_corollary} implies that for $\bsomega$ such that $B(\bsomega) \leq D$ and $k>K$,
\begin{equation*}
    d(X_k(\bsomega),Y_k(\bsomega)) = \sup_{1 \leq i \leq n}\sup_{x \in [0,1]} |f_{\barv_i,k}(x)-f_{\bl_i,k}(x)| < \delta.
\end{equation*}
Together with \eqref{eq:B_omega} this implies that
\begin{equation*}
    \Pr(d(X_k,Y_k) > \delta) < \eps
\end{equation*}
for all $k>K$. Since $\delta,\eps$ were arbitrary, this is exactly the statement that $d(X_k,Y_k) \to 0$ in probability. This completes the proof of Theorem \ref{thm:general_hl_lln_clt}.
\end{proof}

\begin{rmk}
It is worth noting that fact that $\l_i$ jumps further than $\l_j$ in expectation for $i < j$ comes from the fact that $t < 1$. Hence our technique would no longer hold if one were to take a simultaneous limit $t \to 1$ as well, because the hopping particles would not outpace the ones behind them and hence interactions between them could contribute nontrivially in the limit. Such $t \to 1$ limits of Hall-Littlewood processes have been studied by Dimitrov \cite{dimitrov2018kpz} and Corwin-Dimitrov \cite{corwin2018transversal}, though we do not see how their results would apply directly to our specific case. We note also that the connection to $p$-adic random matrices is lost in this regime. 
\end{rmk}

\subsection{Universality of Lyapunov exponents.}\label{subsec:lyapunov}

Given the law of large numbers in \Cref{thm:lln_and_func_clt_rmt_version} and the formulas in \Cref{lem:facts_about_differences}, the proof of \Cref{thm:lyapunov} is quite easy. First recall the statement. 

\lyapunov*

\begin{proof}
For existence of Lyapunov exponents, we have
\begin{equation*}
    L_i^{(n)} := \lim_{k \to \infty} \frac{\l_{n-i+1}(k)}{k} = \lim_{k \to \infty} \frac{\frac{\l_{n-i+1}(k)}{\sum_{j=1}^k \mu(t^{n-i+1},\ldots,t^{N_j^{(n)}-i})}}{\frac{k}{\sum_{j=1}^k  \mu(t^{n-i+1},\ldots,t^{N_j^{(n)}-i})}}.
\end{equation*}
The limit of the numerator exists almost surely by \Cref{thm:lln_and_func_clt_rmt_version}. For the denominator we have
\begin{equation*}
    \frac{\sum_{j=1}^k  \mu(t^{n-i+1},\ldots,t^{N_j^{(n)}-i})}{k} = \sum_{N > n} \frac{|\{1 \leq j \leq k: N_j^{(n)}=N\}|}{k} \mu(t^{n-i+1},\ldots,t^{N_j^{(n)}-i}),
\end{equation*}
hence
\begin{equation*}
    \lim_{k \to \infty} \frac{\sum_{j=1}^k  \mu(t^{n-i+1},\ldots,t^{N_j^{(n)}-i})}{k} = \sum_{N > n} \rho_n(N)  \mu(t^{n-i+1},\ldots,t^{N_j^{(n)}-i}),
\end{equation*}
(this uses the fact that $\mu(t^{n-i+1},\ldots,t^{N-i})$ is bounded as a function of $N$). Therefore $L_i^{(n)}$ exists almost surely.

Recall that for $Z_x \sim G_x$,
\begin{equation*}
    \E Z_x = \frac{x(1-t)}{(1-tx)(1-x)}.
\end{equation*}
It is an elementary check that there exist constants $B(i)$ depending only on $t$ and $i$ such that for any $x < 1$ and $n \geq i$,
\begin{equation}\label{eq:bound_to_L_tilde}
    \abs*{\E Z_{t^{n-i}x} - (1-t)xt^{n-i}} < B(i) t^{2n}x.
\end{equation}
Let
\begin{equation*}
    \tL_i^{(n)} := \lim_{k \to \infty} \frac{\sum_{j=1}^k \sum_{\ell=1}^{N_j^{(n)}-n} (1-t)t^{n-i}\cdot t^\ell}{k}.
\end{equation*}
Then
\begin{equation}\label{eq:ltilde_limit}
    \tL_i^{(n)} = \frac{\sum_{j=1}^k t^{n-i+1}(1-t^{N_j^{(n)}-n})}{k} = t^{n-i+1}(1-c(n)).
\end{equation}
But also by \eqref{eq:bound_to_L_tilde},
\begin{equation*}
    \abs*{\sum_{j=1}^k \mu(t^{n-i+1},\ldots,t^{N_j^{(n)}-i}) - \sum_{j=1}^k \sum_{\ell=1}^{N_j^{(n)}-n} (1-t)t^{n-i}\cdot t^\ell} < \sum_{j=1}^k \sum_{\ell=1}^{N_j^{(n)}-n} B(i)t^{2n}t^\ell < k B(i) t^{2n} \frac{t}{1-t},
\end{equation*}
hence 
\begin{equation*}
    \abs*{L_i^{(n)} - \tL_i^{(n)}} < B(i)t^{2n} \frac{t}{1-t}.
\end{equation*}
Since $c(n) \leq t < 1$, $\frac{B(i)t^{2n} \frac{t}{1-t}}{t^n(1-c(n))} = o(1)$ as $n \to \infty$, so \eqref{eq:ltilde_limit} implies
\begin{equation*}
    \lim_{n \to \infty} \frac{L_i^{(n)}}{t^n(1-c(n))} = \lim_{n \to \infty} \frac{\tL_i^{(n)}}{t^n(1-c(n))} + \frac{L_i^{(n)} - \tL_i^{(n)}}{t^n(1-c(n))} = t^{1-i}.
\end{equation*}
\end{proof}

\begin{rmk}
It is worth noting that if one instead considers
\begin{equation*}
    \lim_{k \to \infty} \frac{1}{k}\log(\l_i(k))
\end{equation*}
(which in our analogy corresponds to the $i\tth$ smallest singular value), then the $n \to \infty$ limits are not universal and indeed the limits may not exist for some choices of the $N_j^{(n)}$. If the $N_j^{(n)}$ are all the same for any fixed $n$, then one has 
\begin{equation*}
    \lim_{k \to \infty} \frac{1}{k}\log(\l_i(k)) = \E[Y_i(k)]
\end{equation*}
and this clearly depends on the choice of $N_j^{(n)}$. 
\end{rmk}

\appendix

\section{Relations to the Archimedean case and alternate proof of {\Cref{prop:measure_convergence_jacobi_and_cauchy}}}\label{sec:appendixA}

As mentioned in the Introduction, \Cref{thm:exact_hl_results_in_p-adic_rmt} is exactly analogous to results on singular values of corners and products in the real, complex and quaternion cases. We will first informally state these results in more detail than in the Introduction in order to highlight the parallel, and give references to more complete treatments. We will then give an alternate proof of \Cref{prop:measure_convergence_jacobi_and_cauchy} which is simpler, but valid only under additional assumptions which do \emph{not} cover the Hall-Littlewood case $q=0$. This was the first proof we found, but we were unable to justify the $q \to 0$ limit and hence resorted to the stronger results proven in \Cref{sec:3}. However, the proof below has the advantage that it survives the limit to the real/complex/quaternion cases which we are about to describe, and hence could be used to adapt the convolution-of-projectors method of \Cref{thm:exact_hl_results_in_p-adic_rmt} to prove the analogous result in this setting. At the end of the appendix we will outline how this could be carried out.

Fix a parameter $\beta > 0$ and let $q=e^{-\eps}$, $t = q^{\beta/2}$. In all cases below, we assume that the integers $n,m,N,k$ satisfy the same constraints as in \Cref{thm:exact_hl_results_in_p-adic_rmt}. Below we give an informal statement of the analogue of \Cref{thm:exact_hl_results_in_p-adic_rmt} in the real, complex and quaternion setting.
\begin{enumerate}
    \item Define the random signature $\l(\eps)$ by
    \begin{equation*}
        \Pr(\l(\eps)=\l) = \frac{P_\l(1,t,\ldots,t^{n-1};q,t)Q_\l(t^{m-n+1},\ldots,t^{N-n};q,t)}{\Pi_{(q,t)}(1,t,\ldots,t^{n-1};t^{m-n+1},\ldots,t^{N-n})}
    \end{equation*}
    for any $\l \in \Sig_n^+$, with $q,t$ depending on $\eps$ as above. Then as $\eps \to 0$, the random real signature $\eps \l(\eps) = (\eps \l_1(\eps),\ldots,\eps \l_n(\eps))$  converges in distribution to some limiting random real signature $\l(0)$. When $\beta = 1,2,4$, $\l(0)$ has the same distribution as $(-\log(r_n),\ldots,-\log(r_1))$, where $r_1 \geq \cdots \geq r_n$ are the squared singular values of an $n \times m$ corner of a Haar-distributed element of $\O(n), \U(n)$ or $\Sp^*(n)$ respectively. This is due to Forrester-Rains \cite{forrester2005interpretations}, see also Borodin-Gorin \cite[Thm. 2.8]{borodin2015general}. 
    \item Fix a real signature $\ell$ of length $n$ and define the nonrandom signature 
    \begin{equation*}
        \l(\eps) := (\lfloor \ell_1/\eps \rfloor, \ldots, \lfloor \ell_n/\eps \rfloor) \in \Sig_n.
    \end{equation*}
    Define the random signature $\nu(\eps)$ by
    \begin{equation*}
        \Pr(\nu(\eps)=\nu) = \frac{Q_{\nu/\l(\eps)}(1,\ldots,t^{-(k-1)};q,t)P_\nu(t^{N-n},\ldots,t^{N-1};q,t)}{P_\l(t^{N-n},\ldots,t^{N-1};q,t) \Pi_{(q,t)}(1,\ldots,t^{-(k-1)};t^{N-n},\ldots,t^{N-1})}
    \end{equation*}
    for any $\nu \in \Sig_n$. Then as $\eps \to 0$, $\nu(\eps)$ converges to a random real signature $\nu(0)$. Suppose $\beta = 1,2,4$ and $\F = \R,\C,\H$ respectively, and $A_{col} \in M_{n \times (N-k)}(\F)$ is the first $N-k$ columns of $A \in M_{n \times N}(\F)$ with fixed singular values $e^{-\ell}:=(e^{-\ell_1},\ldots,e^{-\ell_n})$ and distribution invariant under the orthogonal, unitary or symplectic groups acting on the right and left. Then the distribution of the negative logarithms of the squared singular values of $A_{col}$ is given by $\nu(0)$. The statement for \eqref{eq:p-adic_corners} is exactly analogous. We could not locate these exact statements in the literature but essentially equivalent ones appear in Borodin-Gorin \cite{borodin2015general} and Sun \cite{sun2016matrix} when considering the Jacobi corners process. 
    \item Fix real signatures $r,\ell$ of length $n$ and define nonrandom integer signatures $\l(\eps)$ as above and $\rho(\eps)$ similarly with $r$ in place of $\ell$. Then as $\eps \to 0$, $\eps \cdot (\rho(\eps) \boxtimes_{(1,\ldots,t^{n-1})} \l(\eps))$ (where we abuse notation and use $\boxtimes$ to refer to the convolution operation with Macdonald polynomials instead of Hall-Littlewood) converges to a random real signature $s$. When $\beta = 1,2,4$, $e^{-s}$ gives the distribution of singular values of $AB$ where $A,B$ are bi-invariant under the orthogonal, unitary or symplectic group and have fixed singular values $e^{-r}$ and $e^{-\ell}$. See Gorin-Marcus \cite[Prop. 2.2]{gorin2020crystallization}.
\end{enumerate}

More general background on these limits may be found in Ahn \cite{ahn2019fluctuations}, Borodin-Gorin \cite{borodin2015general}, Gorin-Marcus \cite{gorin2020crystallization}, and Sun \cite{sun2016matrix}.

\begin{rmk}
The explicit formulas for the above distributions are uniform expressions in terms of $\beta$, and the distributions for general $\beta \in [0,\infty)$ are referred to as \emph{$\beta$-ensembles}. $\beta$ is then seen as an inverse temperature parameter, and the zero-temperature limit $\beta \to \infty$ has in particular been studied, both because it provides tractable though accurate approximations to $\beta=1,2,4$, and because it exhibits asymptotic behaviors interesting in their own right. In particular, the product convolution and corners operation--the analogues of \Cref{thm:exact_hl_results_in_p-adic_rmt} Parts 3 and 2 respectively--become deterministic in this limit and are controlled by certain orthogonal polynomials. See Gorin-Marcus \cite{gorin2020crystallization} and Gorin-Kleptsyn \cite{gorin2020universal} for a discussion of the eigenvalue (as opposed to singular value) case, and Borodin-Gorin \cite[Cor. 5.4]{borodin2015general} for the deterministic $\beta \to \infty$ limit of Jacobi corners; we are not aware of anywhere the $\beta \to \infty$ limits of general corners and products (the analogues of Parts 2, 3 of \Cref{thm:exact_hl_results_in_p-adic_rmt}) are worked out explicitly in the literature. In our setting, viewing the measures and operations of \Cref{thm:exact_hl_results_in_p-adic_rmt} for arbitrary $t \in (0,1)$ not necessarily a prime power is exactly analogous to this extrapolation to general $\beta$.

We observe the exact same freezing to a deterministic operation in the $p$-adic case of products and corners in the limit $p \to \infty$, i.e. $t \to 0$. It is interesting to note that while the $\beta \to \infty$ limit requires extrapolation away from the usual matrix models, the $t \to 0$ limit does not because one can find arbitrarily large primes. In the corners case, the partition $\nu$ in the notation of \Cref{thm:exact_hl_results_in_p-adic_rmt} concentrates around $\l$, and the partition $\mu$ concentrates around $(\l_{d+1},\ldots,\l_n)$. In the product case, $\nu$ concentrates around $(\l_1+\mu_1,\ldots,\l_n+\mu_n)$. These facts may be easily verified using the explicit formulas for Hall-Littlewood polynomials in \Cref{subsec:hl}, and may also be seen heuristically directly from the matrix models without any formulas.
\end{rmk}

Below we prove \Cref{prop:measure_convergence_jacobi_and_cauchy} under the additional assumptions that $\ba = (1,t,\ldots,t^{N-1})$ and $q,t \in (0,1)$. We remark that \Cref{prop:measure_convergence_branching_corners} may be proven by similar label-variable duality manipulations under the restricted hypotheses as above; the modifications to the proof below are not difficult.

\begin{proof}

For the remainder of the proof, we will denote $\boxtimes_{(1,\ldots,t^{N-1})}$ by $\boxtimes_t$ and use $\Supp$ for the support of a measure. Let $\l(D) = (D[N-n],\l)$ and $\mu(D) = (D[N-m],0[m])$. Recall that 
\begin{equation*}
    \mcj(\nu) := \sum_{\substack{\kappa \in \Sig_N \\ \kappa_{N-n+i}=\nu_i \text{ for all }i=1,\ldots,n}} c_{\l(D),\mu(D)}^\kappa(q,t) \frac{P_\kappa(t^{N-1},\ldots,1)}{P_{\l(D)}(t^{N-1},\ldots,1)P_{\mu(D)}(t^{N-1},\ldots,1)}
\end{equation*}
and we wish to show 
\begin{equation}\label{limit_to_t_mac}
    \mcj(\nu) \to \frac{P_\nu(t^{n-1},\ldots,1)Q_{\nu/\l}(t^{N-n},\ldots,t^{m-n+1})}{P_\l(t^{n-1},\ldots,1) \Pi(t^{n-1},\ldots,1;t^{N-n},\ldots,t^{m-n+1})}
\end{equation}
(note that we have written the measure in a different form from \Cref{prop:measure_convergence_jacobi_and_cauchy} by using homogeneity to rearrange powers of $t$).

Denote the limiting measure of \eqref{limit_to_t_mac} by $\mc{M}$. 
The proof is by a kind of moments method which consists of showing the convergence of expectations of observables
\begin{equation}\label{eq:observable_convergence}
    \E_{\nu \sim \mcj}[P_\alpha(q^{\nu_1}t^{n-1},\ldots,q^{\nu_n})] \to \E_{\nu \sim \mc{M}}[P_\alpha(q^{\nu_1}t^{n-1},\ldots,q^{\nu_n})]
\end{equation}
as $D \to \infty$ for each $\alpha \in \Sig_n^+$, followed by an argument that these `moments' are sufficient to give convergence of measures. We rely on the nontrivial label-variable duality satisfied by these observables, see \cite[Section 6]{mac}:
\begin{equation}\label{eq:label-variable}
    \frac{P_\nu(q^{\alpha_1}t^{n-1},\ldots,q^{\alpha_n})}{P_\nu(t^{n-1},\ldots,1)} = \frac{P_\alpha(q^{\nu_1}t^{n-1},\ldots,q^{\nu_n})}{P_\alpha(t^{n-1},\ldots,1)}.
\end{equation}
Such a strategy is used to prove similar statements in \cite[Section 4]{gorin2020crystallization}.

We first show 
\begin{equation}\label{eq:longer_sig}
    \abs*{\E_{\nu \sim \mcj}[P_\alpha(q^{\nu_1}t^{n-1},\ldots,q^{\nu_n})] - \E_{\kappa \sim \l(D) \boxtimes_t \mu(D)}[P_{(\alpha,0[N-n])}(q^{\kappa_1}t^{N-1},\ldots,q^{\kappa_N})]} \to 0
\end{equation}
as $D \to \infty$. To show \eqref{eq:longer_sig} it suffices to show that there exist constants $C(\alpha,D)$ independent of $\kappa \in \Supp(\l(D) \boxtimes_t \mu(D))$ such that $C(\alpha,D) \to 0$ as $D \to \infty$ and
\begin{equation}\label{eq:change_observable}
    |P_{(\alpha,0[N-n])}(q^{\kappa_1}t^{N-1},\ldots,q^{\kappa_N}) -P_\alpha(q^{\nu_1}t^{n-1},\ldots,q^{\nu_n})| < C(\alpha,D).
\end{equation}
where $\nu$ is defined by $\nu_i = \kappa_{N-n+i}$.
This suffices because the support $\Supp(\l(D) \boxtimes_t \mu(D))$ of this measure contains only $\kappa$ for which $\kappa \supset \l(D)$ by basic properties of the structure coefficients, hence
\begin{equation*}
    \abs*{\E_{\nu \sim \mcj}[P_\alpha(q^{\nu_1}t^{n-1},\ldots,q^{\nu_n})] - \E_{\kappa \sim \l(D) \boxtimes_t \mu(D)}[P_{(\alpha,0[N-n])}(q^{\kappa_1}t^{N-1},\ldots,q^{\kappa_N})]} <  C(\alpha,D)
\end{equation*}
by \eqref{eq:change_observable} and linearity of expectation. So let us prove \eqref{eq:change_observable}.

$P_{(\alpha,0[N-n])}$ is a polynomial in $N$ variables $q^{\kappa_1}t^{N-1},\ldots,q^{\kappa_N}$, which we split into two collections of variables, the first $N-n$ and the last $n$. As $D \to \infty$, the first $N-n$ variables go to $0$ because $\kappa_i \geq \l(D)_i = D$ for $i=1,\ldots,N-n$, for any $\kappa \in \Supp(\l(D) \boxtimes_t \mu(D))$, hence 
\begin{multline*}
    P_{(\alpha,0[N-n])}(q^{\kappa(D)_1}t^{N-1},\ldots,q^{\kappa(D)_N}) \to P_{(\alpha,0[N-n])}(0[N-n],t^{n-1}q^{\kappa(D)_{N-n+1}},\ldots,q^{\kappa(D)_N}) \\
    = P_\alpha(q^{\nu_1}t^{n-1},\ldots,q^{\nu_n})
\end{multline*}
for any sequence $\kappa(D) \in \Supp(\l(D) \boxtimes_t \mu(D))$ with last $n$ parts given by $\nu$. The last $n$ variables always lie in a compact interval $[0,q^{\l_n}]$ because $\kappa_i \geq \l_n$ for $i=N-n+1,\ldots,N$ by interlacing, for any $\kappa \in \Supp(\l(D) \boxtimes_t \mu(D))$. Hence the above convergence is uniform over $\nu$ and $\kappa$, i.e. \eqref{eq:change_observable} holds.

Thus to show \eqref{eq:observable_convergence}, it suffices to show
\begin{equation}\label{eq:observable_convergence2}
    \E_{\kappa \sim \l(D) \boxtimes_t \mu(D)}[P_{(\alpha,0[N-n])}(q^{\kappa_1}t^{N-1},\ldots,q^{\kappa_N})] \to \E_{\nu \sim \mc{M}}[P_\alpha(q^{\nu_1}t^{n-1},\ldots,q^{\nu_n})] .
\end{equation}
Now, using label-variable duality \eqref{eq:label-variable},
\begin{align*}
    &\E_{\kappa \sim \l(D) \boxtimes_t \mu(D)}[P_{(\alpha,0[N-n])}(q^{\kappa_1}t^{N-1},\ldots,q^{\kappa_N})] \\
    &=\E_{\kappa \sim \l(D) \boxtimes_t \mu(D)}\left[\frac{P_\kappa(q^{\alpha_1}t^{N-1},\ldots,1)}{P_\kappa(t^{N-1},\ldots,1)}P_{(\alpha,0[N-n])}(t^{N-1},\ldots,1)\right] \\
    &= \sum_{\kappa \in \Sig_N} c_{\l(D),\mu(D)}^\kappa(q,t) \frac{P_\kappa(t^{N-1},\ldots,1)P_{(\alpha,0[N-n])}(t^{N-1},\ldots,1)}{P_{\l(D)}(t^{N-1},\ldots,1)P_{\mu(D)}(t^{N-1},\ldots,1)}\frac{P_\kappa(q^{\alpha_1}t^{N-1},\ldots,1)}{P_\kappa(t^{N-1},\ldots,1)} \\
    &= \frac{P_{(\alpha,0[N-n])}(t^{N-1},\ldots,1)}{P_{\l(D)}(t^{N-1},\ldots,1)P_{\mu(D)}(t^{N-1},\ldots,1)} \sum_{\kappa \in \Sig_N} c_{\l(D),\mu(D)}^\kappa(q,t)P_\kappa(q^{\alpha_1}t^{N-1},\ldots,1) \\
    &= \frac{P_{(\alpha,0[N-n])}(t^{N-1},\ldots,1)}{P_{\l(D)}(t^{N-1},\ldots,1)P_{\mu(D)}(t^{N-1},\ldots,1)} P_{\l(D)}(q^{\alpha_1}t^{N-1},\ldots,1)P_{\mu(D)}(q^{\alpha_1}t^{N-1},\ldots,1) \\
    &= \frac{P_{(\alpha,D[N-n])}(q^Dt^{N-1},\ldots,q^D t^n, q^{\l_1}t^{n-1},\ldots,q^{\l_n})P_{(\alpha,D[N-n])}(q^Dt^{N-1},\ldots,q^D t^m, t^{m-1},\ldots,1)}{P_{(\alpha,D[N-n])}(t^{N-1},\ldots,1)}.
\end{align*}
As $D \to \infty$, the above clearly converges to 
\begin{equation*}
    \frac{P_{\alpha}(q^{\l_1}t^{n-1},\ldots,q^{\l_n}) P_{\alpha}(t^{m-1},\ldots,1)}{P_\alpha(t^{N-1},\ldots,1)},
\end{equation*}
so we must show 
\begin{equation}\label{eq:1expectation}
    \E_{\nu \sim \mc{M}}[P_\alpha(q^{\nu_1}t^{n-1},\ldots,q^{\nu_n})] =   \frac{P_{\alpha}(q^{\l_1}t^{n-1},\ldots,q^{\l_n}) P_{\alpha}(t^{m-1},\ldots,1)}{P_\alpha(t^{N-1},\ldots,1)}.
\end{equation}
Again using label-variable duality, and the Cauchy identity \Cref{lem:asym_cauchy}, we have
\begin{align*}
    & \E_{\nu \sim \mc{M}}[P_\alpha(q^{\nu_1}t^{n-1},\ldots,q^{\nu_n})] \\
    & = \E_{\nu \sim \mc{M}}\left[\frac{P_\alpha(t^{n-1},\ldots,1) P_\nu(q^{\alpha_1}t^{n-1},\ldots,q^{\alpha_n})}{P_\nu(t^{n-1},\ldots,1)}\right]\\
    &= \frac{P_\alpha(t^{n-1},\ldots,1)\sum_{\nu \in \Sig_n} P_\nu(q^{\alpha_1}t^{n-1},\ldots,q^{\alpha_n})Q_{\nu/\l}(t^{N-n},\ldots,t^{m-n+1})}{\Pi(t^{n-1},\ldots,1;t^{N-n},\ldots,t^{m-n+1})P_\l(t^{n-1},\ldots,1)} \\
    &=  \frac{\Pi(q^{\alpha_1}t^{n-1},\ldots,q^{\alpha_n}; t^{N-n},\ldots,t^{m-n+1})}{\Pi(t^{n-1},\ldots,1;t^{N-n},\ldots,t^{m-n+1})} \frac{P_\alpha(t^{n-1},\ldots,1)P_\l(q^{\alpha_1}t^{n-1},\ldots,q^{\alpha_n})}{P_\l(t^{n-1},\ldots,1)} \\
    &= \frac{\Pi(q^{\alpha_1}t^{n-1},\ldots,q^{\alpha_n}; t^{N-n},\ldots,t^{m-n+1})}{\Pi(t^{n-1},\ldots,1;t^{N-n},\ldots,t^{m-n+1})} P_\alpha(q^{\l_1}t^{n-1},\ldots,q^{\l_n}).
\end{align*}
Hence \eqref{eq:1expectation} is equivalent to
\begin{equation}\label{eq:cauchy_and_P}
     \frac{ \Pi(q^{\alpha_1}t^{n-1},\ldots,q^{\alpha_n}; t^{N-n},\ldots,t^{m-n+1})}{\Pi(t^{n-1},\ldots,1;t^{N-n},\ldots,t^{m-n+1})} = \frac{P_{\alpha}(t^{m-1},\ldots,1)}{P_\alpha(t^{N-1},\ldots,1)}.
\end{equation}
\eqref{eq:cauchy_and_P} follows by applying the explicit formula for principally specialized Macdonald polynomials, \cite[(6.11')]{mac}, to the numerator and denominator of the RHS, expanding the LHS into infinite products and noting that all but finitely many terms cancel, and comparing the resulting expressions. 

We have proven convergence of `moments', so let us upgrade this to convergence of measures. Consider the compact set 
\begin{equation*}
    \U^n := \{(u_1,\ldots,u_n) \in \R^n: 0 \leq u_1 \leq \cdots \leq u_n \leq q^{\l_n}\}.
\end{equation*}
Then we have a map $\phi: \Sig_n \to \U^n$ given by $\phi(\nu_1,\ldots,\nu_n) = (q^{\nu_1},\ldots,q^{\nu_n})$. Also, \begin{equation*}
    f_\alpha(u_1,\ldots,u_n) := \frac{P_\alpha(u_1t^{n-1},\ldots,u_n)}{P_\alpha(t^{n-1},\ldots,1)}
\end{equation*}
defines a function on $\U^n$. The subalgebra of $\mc C(\U^n)$ generated by the functions $f_\alpha$ is just the set of finite linear combinations of $f_\alpha$ because products of Macdonald polynomials may be expanded as linear combinations of Macdonald polynomials. This algebra contains the constant functions ($f_{(0[n])}$ is constant) and separates points, so by the Stone-Weierstrass theorem it is dense in $\mc C(\U^n)$ with sup norm. 

By hypothesis, the structure coefficients are nonnegative and hence $\mcj$ is indeed a probability measure for each $D$. To show weak convergence $\mcj \to \mc{M}$, we must show for any $f \in \mc C(\U^n)$ that $\int_{\U^n} f d\phi_*(\mcj) \to \int_{\U^n} f d\phi_*(\mc{M})$. By the above, there exists a linear combination $g$ of $f_\alpha$s such that $\sup_{u \in U^n}|f(u)-g(u)| < \eps/3$. Since $\mc{M}$ and $\mcj$ are probability measures it follows that $ \int_{\U^n} |f-g| d\phi_*(\mc{M})  < \eps/3$ and similarly with $\mc{M}$ replaced by any $\mcj$. By \eqref{eq:observable_convergence}, we may choose $D$ such that 
\begin{equation*}
     \abs*{\int_{\U^n} g d\phi_*(\mcj) - \int_{\U^n} g d\phi_*(\mc{M})} < \eps/3.
\end{equation*}
   
Putting together the three inequalities yields 
\begin{equation*}
    \abs*{\int_{\U^n} f d\phi_*(\mcj) -\int_{\U^n} f d\phi_*(\mc{M})} < \eps,
\end{equation*}
hence $\phi_*(\mcj)$ converges weakly to $\phi_*(\mc{M})$. Because both measures are supported on a discrete subset $\phi(\{\nu \in \Sig_n: \nu_n \geq \l_n\})$ of $\U^n$, this implies $\mcj(\nu) = \phi_*(\mcj)(\phi(\nu)) \to \phi_*(\mc{M})(\phi(\nu)) = \mc{M}(\nu)$ for each $\nu \in \Sig_n$, completing the proof.
\end{proof}

The proofs of \Cref{prop:measure_convergence_jacobi_and_cauchy} and \Cref{prop:measure_convergence_branching_corners} in \Cref{sec:3.2} heavily used the discrete structure of the set of integer signatures, and we have no idea how they would be modified to the continuum limit to real signatures described earlier. However, we claim that the above proof could be modified with no substantial changes. Let us briefly outline why this is so.

\begin{defi}\label{def:HO}
Let $r = (r_1,\ldots,r_n) \in \Sig_n^\R$ have distinct parts, $\theta > 0$ a parameter, and $y_1,\ldots,y_n$ complex variables. Setting $\l(\eps) = \lfloor \eps^{-1}(r_1,\ldots,r_n) \rfloor$, we define the (type A) Heckman-Opdam hypergeometric function
\begin{equation*}
    \mc F_r(y_1,\ldots,y_n;\theta) := \lim_{\eps \to 0} \eps^{\theta \binom{n}{2}} P_\l(e^{\eps y_1},\ldots,e^{\eps y_n}; q=e^{-\eps}, t = e^{-\theta \eps})
\end{equation*}
\end{defi}

The dual Heckman-Opdam function may be obtained similarly by degenerating $Q$. Instead of defining the measures appearing in the singular value setting as limits of Macdonald measures, as we did earlier in this Appendix, one may instead first take the limit to Heckman-Opdam functions and then define measures in terms of these. When one takes the limit of \eqref{eq:label-variable} in the above regime, one obtains 
\begin{equation*}
    \frac{\mc F_r(-\l_1-(n-1)\theta, -\l_2-(n-2)\theta,\ldots,-\l_n;\theta)}{\mc F_r(-(n-1)\theta,\ldots,0;\theta)} = \frac{J_\l(e^{-r_1},\ldots,e^{-r_n};\theta)}{J_\l(1,\ldots,1;\theta)}
\end{equation*}
where $J_\l$ is the classical Jack polynomial. The same argument used to prove \Cref{prop:measure_convergence_jacobi_and_cauchy} above may be used after this limit, with the Macdonald polynomials replaced by Heckman-Opdam functions or Jack polynomials as appropriate given the above, and the sums replaced by integrals. This post-limit version of \Cref{prop:measure_convergence_jacobi_and_cauchy} may then be used to implement the convolution-of-projectors strategy we used in \Cref{sec:3} to prove the analogue of \Cref{thm:exact_hl_results_in_p-adic_rmt} in the real/complex/quaternion setting. We refer to \cite{gorin2020crystallization} for similar random matrix arguments utilizing label-variable duality and Jack/Heckman-Opdam functions.




\end{document}